\newcommand\C{{\cal C}}
\newcommand\E{{\cal E}}
\newcommand\PG{{\rm{PG}}}
\newcommand\GF{{\rm{GF}}}
\newcommand\D{{\cal D}}
\renewcommand{\P}{\mathcal{P}}
\newcommand\bs{\backslash}
\newcommand\st{:}
\newcommand\A{{\cal A}}
\renewcommand\O{{\cal O}}
\newcommand{\T}{{\mathcal T}}
\newcommand{\V}{{\mathcal V}}
\newcommand{\linfty}{\ell_{\infty}}
\newcommand{\li}{\ell_{\infty}}
\renewcommand{\H}{{\mathcal H}}
\newcommand\U{{\cal U}}
\newcommand\Q{{\cal Q}}
\newcommand\sinfty{{\Sigma_\infty}}
\newcommand\si{{\Sigma_\infty}}
\renewcommand\S{{\cal S}}
\newcommand\Cp{{\mathcal O}}
\newcommand\cpl{$\C$-plane}
\newcommand\infpt{$\infty$-point}
\newcommand\cpt{$\C$-point}
\newcommand\cln{$\C$-line}
\newcommand\ti{t_\infty}
\newcommand{\Label}{\label}
\newtheorem{theorem}{Theorem}[section]
\newtheorem{lemma}[theorem]{Lemma}
\newtheorem{corollary}[theorem]{Corollary}
\newtheorem{defn}[theorem]{Definition}
\newenvironment{proof}{\noindent{\bf Proof}\hspace{0.5em}}
    { \null  \hfill $\square$ \par}
\newcommand{\R}{{\cal R}}
\begin{document}
\title{Characterising pointsets in $\PG(4,q)$ that correspond to conics}

\author{S.G. Barwick and Wen-Ai Jackson
\date{}\\ 
School of Mathematics, University of Adelaide\\
Adelaide 5005, Australia
}

\maketitle

AMS code: 51E20.

Keywords: finite projective geometry, Bruck-Bose representation, conics, characterization

\begin{abstract}
We consider a non-degenerate conic in $\PG(2,q^2)$, $q$ odd, that is tangent to $\ell_\infty$ and
look at its structure in
the Bruck-Bose representation in $\PG(4,q)$.
We determine which combinatorial properties of this set of points in
$\PG(4,q)$ are
needed to reconstruct the conic in $\PG(2,q^2)$. That is,
we define a set $\C$ in $\PG(4,q)$ with $q^2$ points that satisfies certain
combinatorial
properties. We then show that if $q\ge 7$, we can use $\C$ to construct a regular
spread $\S$ in the hyperplane at infinity of $\PG(4,q)$, and that $\C$ corresponds to a conic in the
Desarguesian plane $\P(\S)\cong\PG(2,q^2)$ constructed via the Bruck-Bose correspondence.

\end{abstract}


\section{Introduction}

We begin by describing the Bruck-Bose representation of a translation plane of order $q^2$ with kernel containing
$\GF(q)$ in $\PG(4,q)$, 
proved independently by Andr\'e \cite{andr54} and Bruck and Bose
\cite{bruc64,bruc66}. 
Let $\sinfty$ be a hyperplane of $\PG(4,q)$ and let $\S$ be a spread of
$\sinfty$.
Consider the incidence structure whose {\em points} are the
points of $\PG(4,q)\setminus\sinfty$, whose {\em lines} are the planes of
PG$(4,q)$ which do not lie in $\Sigma_\infty$ but which meet
$\Sigma_\infty$ in a line of $\S$ and where {\it incidence\/} is inclusion.
This incidence structure is an affine translation plane and can be
uniquely completed to a projective translation plane $\P(\S)$ of order $q^2$ 
by adjoining the line at infinity $\linfty$ whose points are the
elements of the spread $\S$.  The line $\ell_\infty$ is a translation
line for $\P(\S)$.  The translation plane $\P(\S)$ is
Desarguesian if
and only if the spread $\S$ is regular (\cite{bruc64}).
For more details on the Bruck-Bose representation, see~\cite{barw08}, in particular, note that Baer subplanes of $\PG(2,q^2)$ secant to $\li$ are in one to one correspondence with affine planes of $\PG(4,q)$ that do not contain a line of the regular spread $\S$.  

In $\PG(4,q)$, with  $\sinfty$ being the hyperplane at infinity, we call the points of $\PG(4,q)\setminus\sinfty$  {\em
  affine points} and the points in $\si$ {\em infinite points}.
The lines and planes of $\PG(4,q)$ that are not contained in $\sinfty$ are called {\em
  affine lines} and {\em affine planes} respectively. 

Now consider a non-degenerate conic $\overline\C$ in 
 $\PG(2,q^2)$ that is tangent to $\li$ in the point $P_\infty$. In the
Bruck-Bose representation of $\PG(2,q^2)$ in $\PG(4,q)$, the affine points
$\C=\overline\C\setminus\{P_\infty\}$ correspond to a set of $q^2$ affine points in $\PG(4,q)$
also denoted by $\C$. These
points satisfy a variety of properties. 
In \cite{barwcaps}, algebraic properties of these points were determined, and it was shown they were caps. In this article we are interested in combinatorial properties the points satisfy, in particular, combinatorial properties relating to the planes of $\PG(4,q)$. Our aim was to
find the smallest set of these properties that would allow us to
reconstruct the projective plane and the conic $\overline\C$. The properties of the conic in $\PG(2,q^2)$ that we  are interested in are given in the next lemma, which is proved in Section~\ref{section:lemma-proof}. 

\newcommand\oc{\overline\C}
\begin{lemma}\Label{conic-satisfies-props}
 Let $\overline\C$ be a non-degenerate conic of $\PG(2,q^2)$, $q$ odd, 
that is tangent to $\li$ in a point $P_\infty$. Define a $\C$-plane to 
be a Baer subplane of $\PG(2,q^2)$ that is secant to $\li$ and meets 
$\C=\overline\C\setminus\{P_\infty\}$ in $q$ points. Then
\begin{enumerate}
\item Each $\C$-plane meets $\C$ in a $q$-arc. Further, if a Baer 
subplane secant to $\li$
   meets $\C$ in more than four points, it is a $\C$-plane.
\item  Every pair of points of $\C$ lie in exactly one $\C$-plane.
\item The affine points of $\PG(2,q^2)$ are of three types: points of $\C$;
   points on no $\C$-plane (the interior points of $\overline\C$); and points on
   exactly two $\C$-planes (the  exterior points of $\overline\C$).
\end{enumerate}
\end{lemma}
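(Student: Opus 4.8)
The plan is to work directly in $\PG(2,q^2)$ in coordinates for which $\oc=\{(t,t^2):t\in\GF(q^2)\}\cup\{P_\infty\}$, so that $\C$ is the parabola $Y=X^2$, the parametrisation $\nu(t)=(t,t^2)$ identifies $\C$ with $\GF(q^2)$, the point $P_\infty$ is the point at infinity in the $X$-direction, and $\li$ is the tangent line there. For part~1, the arc property is immediate: a \cpl\ $B$ is a Baer subplane, its lines lie on lines of $\PG(2,q^2)$, and a line meets $\oc$ in at most two points, so $B\cap\C$ is a $q$-arc. For the converse, suppose a Baer subplane $B$ secant to $\li$ meets $\C$ in at least five points. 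These form an arc, so five of them lie on a unique conic $C_B$ of $B\cong\PG(2,q)$; its extension over $\GF(q^2)$ is a non-degenerate conic through the same five points, hence equals $\oc$ by uniqueness of a conic on five points in general position. Thus all $q+1$ points of $C_B$ lie on $\oc$, and since $B\cap\oc$ is an arc of $B$ it equals $C_B$. Writing $m=B\cap\li$, we have $m\cap\oc\subseteq\li\cap\oc=\{P_\infty\}$; were $P_\infty\notin B$, the line $m$ would be external to the conic $B\cap\oc$ of $B$, forcing its extension $\li$ to be secant to $\oc$ and contradicting tangency. Hence $P_\infty\in B$, the line $m$ is tangent to $B\cap\oc$ at $P_\infty$, and $B\cap\C$ has exactly $q$ points, so $B$ is a \cpl.

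The key structural step, which drives parts~2 and~3, is to list the \cpl s. I would fix the projection of $\oc$ from $P_\infty$, under which $\oc$ is parametrised by the pencil of lines through $P_\infty$; for a \cpl\ $B$ the $q+1$ lines of $B$ through $P_\infty$ form a Baer subline of this pencil, so the $q$ parameters of $B\cap\C$ together with $\infty$ make up a Baer subline of $\PG(1,q^2)$ through $\infty$, that is, a coset $\ell=\alpha\GF(q)+\beta$ of a $\GF(q)$-subline. Conversely, the stabiliser of $\oc$ and $P_\infty$ in $\PGL(3,q^2)$ acts on the parameters as $t\mapsto\alpha t+\beta$ and is realised by the affinity $g_\ell\colon(X,Y)\mapsto(\alpha X+\beta,\ \alpha^2Y+2\alpha\beta X+\beta^2)$, so each coset $\ell$ yields the \cpl\ $B_\ell=g_\ell(B_0)$ with $B_0=\PG(2,q)$ the standard subplane. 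Hence the \cpl s are exactly the $B_\ell$, and they cut out on $\C\cong\GF(q^2)$ precisely the lines of the affine plane $\AG(2,q)$. Part~2 follows at once, since two points of $\GF(q^2)$ lie on a unique $\GF(q)$-line.

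For part~3 I would test membership in $B_\ell$ directly: an affine point $R=(a,b)$ lies in $B_\ell$ iff $(a-\beta)/\alpha\in\GF(q)$ (equivalently $a\in\ell$) and $(b-a^2)/\alpha^2\in\GF(q)$. Put $c=b-a^2$, so $R\notin\C$ means $c\neq0$; counting tangents from $R$ shows $R$ is exterior or interior according as $a^2-b=-c$ is a nonzero square or a nonsquare, and since $-1$ is a square in $\GF(q^2)$ this is according as $c$ is a nonzero square or a nonsquare. The second membership condition says $[\alpha]^2=[c]$ in the cyclic group $\GF(q^2)^\ast/\GF(q)^\ast$ of order $q+1$; as $\GF(q)^\ast$ lies in the squares of $\GF(q^2)^\ast$, such a square root $[\alpha]$ exists iff $c$ is a square, in which case there are exactly two. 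For each admissible direction the first condition singles out the unique $\GF(q)$-line through $a$ in that direction, hence one \cpl. Therefore an interior point lies on no \cpl\ and an exterior point on exactly two, giving part~3.

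I expect the main obstacle to be the converse in part~1 — showing that five shared points already force the entire sub-conic together with the incidence $P_\infty\in B$ — and the injectivity needed in part~2, namely that a sub-conic determines its Baer subplane uniquely. The latter rests on the fact that the secants of an oval span the subplane, which requires $q$ not too small and should be recorded as a hypothesis. Once the membership criterion for $B_\ell$ is in hand, the square/nonsquare bookkeeping in part~3 is routine.
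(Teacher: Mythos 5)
Your proposal is correct, but it follows a genuinely different route from the paper's proof. The paper argues synthetically: part 1 uses the fact that five points in general position determine a unique conic, together with the observation that a Baer subplane meeting $\oc$ in a subconic contains the contact point of any of its lines that is tangent to $\oc$; part 2 shows that a $\C$-plane through $P,Q$ must contain the quadrangle $P,Q,t_P\cap\li,P_\infty$ and hence is the unique Baer subplane on that quadrangle; part 3 combines transitivity of the homography group fixing $\oc$ and $\li$ on the affine exterior points with a double count of incident pairs, $q^2(q^2-1)/2\cdot x=(q^2+q)(q^2-q)$, giving $x=2$. You instead coordinatise $\oc$ as $Y=X^2$ and classify the $\C$-planes outright: each one traces on the parameter line a Baer subline through $\infty$, i.e.\ a coset $\alpha\GF(q)+\beta$, and each coset is realised by exactly one $\C$-plane $g_\ell(\PG(2,q))$; parts 2 and 3 then drop out of the resulting identification of the incidence structure $(\C,\{\C\mbox{-planes}\})$ with $\AG(2,q)$ and a square-class computation in $\GF(q^2)^\ast/\GF(q)^\ast$ (your membership criterion, after reduction modulo the first condition, and your two-square-roots count are both correct). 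Your route buys two things: the affine-plane structure, which the paper only derives abstractly later (Lemma~\ref{affine-plane}) as a $2$-$(q^2,q,1)$ design, is made completely explicit, and the paper's unproved assertion that the stabiliser of $\oc$ and $\li$ is transitive on affine exterior points is replaced by a concrete calculation. The paper's route buys brevity and freedom from coordinates.

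Two remarks on the step you flag. The injectivity (a $\C$-plane is determined by its trace on $\C$) needs no hypothesis on $q$ beyond $q$ odd: the trace together with $P_\infty$ contains four points of a conic, no three collinear, and a quadrangle of $\PG(2,q^2)$ lies in exactly one Baer subplane; this is precisely the standard fact the paper itself invokes in its proof of part 2, so nothing extra need be recorded. On the other hand, both your argument and the paper's implicitly require $q\geq 5$: you need $P_\infty\in B$ for every $\C$-plane $B$ in order to form the Baer subline through $\infty$, and this comes from the converse in part 1, which applies only when $B$ meets $\C$ in more than four points; the paper's part 2 has the same hidden dependence, since its tangency note applies only once $\pi\cap\oc$ is known to be a subconic. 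So neither proof covers $q=3$, which is consistent with the paper's main theorem assuming $q\geq7$.
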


We now consider what these properties correspond to in the Bruck-Bose representation in $\PG(4,q)$. 
The points of $\C$ correspond to affine points of $\PG(4,q)$, and the \cpl s correspond to affine planes of $\PG(4,q)$ that contain $q$ points of $\C$. We suppose that we have a set of affine points and $\C$-planes satisfying the $\PG(4,q)$ equivalence of the combinatorial properties of Lemma~\ref{conic-satisfies-props}, and show that we can reconstruct the conic in the Bruck-Bose plane.
The main result of this paper is the characterisation given in the next theorem.

\begin{theorem}\Label{mainthm} Let $\sinfty$ be the hyperplane at infinity in $\PG(4,q)$,
  $q\ge7$, $q$ odd.
Let $\C$ be a set of $q^2$ affine points, called $\C$-points, and suppose there exists a set of
affine planes called \cpl s satisfying the following properties:
\begin{itemize}
\item[{\rm (A1)}] Each \cpl\ meets $\C$ in a $q$-arc. Further, if a plane
  meets $\C$ in more than four points, it is a \cpl.
\item[{\rm (A2)}]  Every pair of points of $\C$ lie in exactly one \cpl.
\item[{\rm (A3)}] The affine points of $\PG(4,q)$ are of three types: points of $\C$,
  points on no \cpl, and points on
  exactly two \cpl s.  
\end{itemize}
Then there exists a unique spread $\S$ in $\sinfty$ so that the \cpt s in the Bruck-Bose plane $\P(\S)$ form a $q^2$-arc of $\P(\S)$.
Moreover, the spread $\S$ is regular, and so $\P(\S)\cong\PG(2,q^2)$, and
the $q^2$-arc can be completed to a conic of $\PG(2,q^2)$. 
\end{theorem}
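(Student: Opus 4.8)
The plan is to reconstruct a regular spread $\S$ of $\sinfty$ from the combinatorial data, transfer $\C$ to the Desarguesian plane $\P(\S)$, prove there that $\C$ is a $q^2$-arc, and finally complete it to a conic via Segre's theorem. I would start with the counting forced by (A1)--(A3). Each \cpl\ carries a $q$-arc and hence $\binom{q}{2}$ pairs of \cpt s, and by (A2) each of the $\binom{q^2}{2}$ pairs lies on a unique \cpl; dividing gives exactly $q(q+1)$ \cpl s, and a dual count shows each \cpt\ lies on exactly $q+1$ of them. Counting incidences between \cpl s and the $q^2(q^2-1)$ non-\cpt\ affine points, and using the two-planes-per-point clause of (A3), shows that the points on exactly two \cpl s and the points on no \cpl\ each number $\frac{1}{2}q^2(q^2-1)$. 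These are precisely the parameters of the conic configuration of Lemma~\ref{conic-satisfies-props}, and they tell me the shape of the object I am trying to rebuild.

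The heart of the argument is to manufacture the spread inside $\sinfty\cong\PG(3,q)$. Every \cpl\ is an affine plane and so meets $\sinfty$ in a line; let $L$ be the resulting family of lines at infinity. The first key step is to locate a distinguished line $\ti$ of $\sinfty$, the prospective spread element corresponding to the tangent point $P_\infty$, with the property that every line of $L$ meets $\ti$. I would find $\ti$ by examining the $q+1$ \cpl s through a fixed \cpt\ $P$: their lines at infinity, constrained by the uniqueness in (A2) and the exterior-point count from (A3), should be forced to share a common transversal, and comparing the conclusions for different choices of $P$ pins down a single line $\ti$. One then recovers $\S$ through the Baer-subline/regulus dictionary of the Bruck--Bose correspondence: each line of $L$ is a transversal of a regulus through $\ti$, and the partition property (A3) should force these reguli to assemble into one family $\S$ of $q^2+1$ lines that partitions the points of $\sinfty$. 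I expect the main obstacle to be precisely here: showing that $\S$ is genuinely a spread and, above all, that it is \emph{regular}. Regularity demands that the regulus spanned by any three lines of $\S$ again lie in $\S$, and this closure has to be extracted from the rigidity of the $q$-arcs together with the uniqueness in (A2); it is in this step that the hypothesis $q\ge 7$ does its work, ruling out the sporadic small-arc coincidences that would otherwise permit a non-regular solution. Uniqueness of $\S$ comes for free, since $\ti$ and all the reguli are determined by the combinatorics.

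Once $\S$ is known to be a regular spread, $\P(\S)\cong\PG(2,q^2)$ and the full Bruck--Bose dictionary is available, so the remaining task is to prove that $\C$ is a $q^2$-arc of $\P(\S)$. A line of $\P(\S)$ other than $\ell_\infty$ is an affine plane of $\PG(4,q)$ through a spread element, and I must show it meets $\C$ in at most two points. The clause in (A1) that any plane meeting $\C$ in more than four points is a \cpl\ gives, since a \cpl\ contains no spread element, only the weaker bound of four; upgrading this to the genuine arc bound of two requires the reconstructed spread, by counting the secants of $\C$ against the reguli of $\S$ and using that each \cpl\ meets $\C$ in an arc. With the arc bound in hand, $\C$ together with the point $P_\infty$ of $\ell_\infty$ corresponding to $\ti$ is a $(q^2+1)$-arc, that is, an oval, in $\PG(2,q^2)$; since $q^2$ is odd, Segre's theorem identifies this oval as a non-degenerate conic, tangent to $\ell_\infty$ at $P_\infty$, which is the assertion of the theorem. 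The delicate work is all in the second paragraph; the arc bound requires care but is secured once $\S$ is in place, and the appeal to Segre is immediate.
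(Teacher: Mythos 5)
Your roadmap coincides with the paper's (build a spread $\S$ in $\sinfty$, prove it regular, pass to $\P(\S)$, prove the arc bound, invoke Segre), and your opening counts are correct; but at each point where the real work happens the proposal substitutes an expectation for an argument, and the mechanisms you guess at are not the ones that carry the proof. The central missing idea is the actual construction of the spread lines. For each \cpt\ $A$, the paper takes, in each of the $q+1$ \cpl s $\pi$ through $A$, the tangent line at $A$ to the conic completing the $q$-arc $\pi\cap\C$, and defines $t_A$ to be the set of $q+1$ traces of these tangents on $\sinfty$. Before this can even be set up, one must prove that the completion point $\pi_\infty$ of each $q$-arc $\pi\cap\C$ lies in $\sinfty$ at all; this is a long, delicate covering argument (analysing how other \cpl s cover the points of $\pi\setminus\C$, using $q\ge 7$ to find 2-secants avoiding finitely many bad points), and it is here, in these structural lemmas about how \cpl s intersect, that the hypothesis $q\ge 7$ is principally spent --- not, as you suggest, in a regulus-closure step. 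One must then prove that $t_A$ really is a line (via 3-spaces spanned by pairs of \cpl s sharing an $\infty$-point), and that the $\infty$-points together with the points on no \cln\ form the extra line $\ti$. Your alternative --- that the \cln s through a fixed \cpt\ ``should be forced to share a common transversal'' and that the resulting reguli ``should assemble'' into a spread --- names no mechanism; note also that the \cln s are not themselves spread lines (each meets $\ti$), so without the tangent-line construction you have no candidate for the lines $t_A$. Regularity is likewise asserted rather than proved: the paper needs a projection argument (projecting $\C$ from a point of $\ti$ into a 3-space and recognising an elliptic or hyperbolic quadric) to show that every regulus through $\ti$ and two spread lines lies in $\S$, followed by a Klein-quadric argument to conclude that such a spread is regular.

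Two further concrete gaps. First, your claim that uniqueness of $\S$ ``comes for free, since $\ti$ and all the reguli are determined by the combinatorics'' is wrong: uniqueness in the theorem means that no \emph{other} spread $\S'$ makes $\C$ an arc in $\P(\S')$, and the canonicity of your construction does not exclude this. The paper devotes a separate theorem to it, showing that every line of $\sinfty$ disjoint from $\ti$ and not in $\S$ lies in an affine plane containing at least three \cpt s, and then disposing of putative spread lines meeting $\ti$ by a regulus argument that produces a \cln\ inside $\S'$. Second, your route to the arc bound (``counting the secants of $\C$ against the reguli of $\S$'') is not the one that works; the actual argument is short and regulus-free: if a plane through a spread line $t_A$ contained three \cpt s $P,Q,R$, then the \cpl\ on $P,Q$ and the \cpl\ on $P,R$ would each have to pass through $A$, giving two distinct \cpl s through the pair $A,P$ and contradicting (A2). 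So while your outline is the right one, the proposal omits the construction of $\S$ and all three of the substantive proofs (that $t_A$ is a line, that $\S$ is regular, and that $\S$ is unique).
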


We note that a similar characterisation when $q$ is even is given in \cite{conicqeven}.

The rest of this paper is devoted to proving this theorem. The main structure of the proof is as follows. 
We need a number of preliminary results, leading to  Theorem~\ref{lines-form-a-spread} where we show how to construct a spread $\S$ of $\si$ from $\C$.  In Corollary~\ref{cor:regular-spread}, we show that $\S$ is a regular
spread.  In
Theorem~\ref{thm:isanarc}, we show that the set $\C$ corresponds to a $q^2$-arc in the Bruck-Bose plane $\P(\S)\cong\PG(2,q^2)$, and hence as $q$ is odd, $\C$ is contained in a unique conic.
In Theorem~\ref{thm:diffspread}, we show that if $\S'$ is any other spread
of $\si$, then the points of $\C$ will not correspond to an arc in the associated Bruck-Bose plane $\P(\S')$.

\section{Proof of Lemma~\ref{conic-satisfies-props}}\Label{section:lemma-proof}

In this section we prove that a conic in $\PG(2,q^2)$ satisfies the combinatorial properties stated in Lemma~\ref{conic-satisfies-props}.

\begin{proof}{\bf of Lemma~\ref{conic-satisfies-props}}
%
Let $\overline\C$ be a non-degenerate conic in $\PG(2,q^2)$ tangent to $\li$ in the point $P_\infty$, and let $\C=\overline\C\setminus\{ P_\infty\}$. We begin with a note about subconics of $\overline\C$ in Baer subplanes. 
Suppose $\pi$ is  a Baer subplane that meets $\overline\C$ in a subconic $\O$. Note that for each point $P$ in $\O\cap\pi$, the tangent line of $\overline\C$ at $P$ is a line of $\pi$. Conversely, if a line $\ell$ of $\pi$ is a tangent line of $\overline\C$, then the point of contact $\ell\cap\overline\C$ lies in $\pi$. 

We now prove part 1. The $q$ points of $\C$ in a $\C$-plane lie on the conic $\overline\C$, so are clearly a $q$-arc.
Suppose $\pi$ is a Baer subplane secant to $\li$ which meets $\C$
in five points, then those five points define a unique conic in $\PG(2,q^2)$, namely $\overline\C$. Further, the five points define a unique conic $\O$ in $\pi$, which is necessarily a subconic of $\overline\C$.  As $\pi$ is secant to $\li$, it contains the tangent $\li$ to $\oc$, hence by the above note, it contains the point $P_\infty$ of contact of the tangent $\li$ to $\oc$. Hence $\pi$ meets $\C$ in $q$ points, and so is a $\C$-plane.

For part 2, let $P,Q$ be two points of $\C$ with tangents to $\oc$ 
labeled $t_P$, $t_Q$  respectively.  Suppose $\pi$ is any Baer  subplane secant to $\li$ containing $P,Q$. As $\pi$ contains $\li$, it contains $P_\infty$ by the above note. Further, as $\pi$ contains $P,Q$, we have that $t_P,t_Q$ are lines of $\pi$.   Thus $\pi$ contains the quadrangle  $P,Q,t_P\cap \li,P_\infty$ and hence there is exactly one such subplane. However,  the quadrangle  $P,Q,t_P\cap \li,P_\infty$  defines a unique Baer subplane  which contains five elements from the conic $\oc$: namely the points $P,Q,P_\infty$ and the tangents $\li,t_P$, and thus  contains a subconic of $\oc$ containing $P_\infty$. Thus every pair of points of $\C$ lie on exactly one \cpl.

Note that this allows us to 
 count the number of $\C$-planes. Let $\A$ be 
the incidence structure with {\sl points} the points of $\C$;
  {\sl lines} the $\C$-planes; and inherited incidence. Then $\A$ is a 
$2$-$(q^2,q,1)$ design, and so is an
affine plane of order $q$. Hence there are $q^2+q$ $\C$-planes.

To prove part 3, let $\pi$ be a $\C$-plane, and let $X$ be an affine point of $\pi$ 
that is not in $\C$. Then $X$ lies on two tangents of $\pi\cap\oc$, which 
lie either in $\pi$ or in $\PG(2,q^2)\bs\pi$, depending on whether $X$ is an exterior 
point or an interior point of the conic $\pi\cap\oc$. In either case, $X$ 
lies on two tangents of $\oc$ in $\PG(2,q^2)$, and so is an exterior 
point of $\oc$. Hence the affine points in a $\C$-plane either lie in 
$\C$, or are exterior points of $\oc$. That is, the interior points of 
$\oc$ lie on zero $\C$-planes. It is straightforward to show that  the 
group of homographies of $\PG(2,q^2)$ fixing $\overline\C$ and $\li$ is 
transitive on the affine exterior points of $\overline\C$. Hence all the 
affine exterior points of $\overline\C$ lie on a common number $x$ of 
$\C$-planes. We now count incident pairs $(X,\pi)$ where $X$ is an affine
exterior point of $\overline\C$ that lies on a $\C$-plane $\pi$. We have 
$q^2(q^2-1)/2\times x=(q^2+q)(q^2-q)$, hence $x=2$ as required.
\end{proof}

\section{Proof of Theorem~\ref{mainthm}}
\subsection{Properties of $\C$-planes}

Let $\C$ be a set of $q^2$ affine points in $\PG(4,q)\setminus\si$, $q\ge7$, $q$ odd, satisfying
assumptions (A1), (A2) and (A3) of Theorem~\ref{mainthm}. 
We begin by noting that the $\C$-points and  \cpl s form an
affine plane. This gives us a natural set of parallel classes which will be useful
in our proof.

\begin{lemma}\Label{affine-plane}
Let $\A$ be the incidence structure with {\sl points} the points of $\C\!$;
 {\sl lines} the \cpl s; and inherited incidence. Then $\A$ is an
affine plane of order $q$, and so there are $q^2+q$ \cpl s. 
Hence there are $q+1$ {\sl parallel classes} of $\C$-planes, each containing $q$ {\em parallel} \cpl s.
\end{lemma}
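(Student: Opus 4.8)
The plan is to recognise $\A$ as a combinatorial design with the parameters of an affine plane, and then invoke the classical equivalence between $2$-$(q^2,q,1)$ designs and affine planes of order $q$. This is exactly the argument already carried out in the proof of Lemma~\ref{conic-satisfies-props}, but now deduced from the abstract axioms (A1)--(A3) of Theorem~\ref{mainthm} rather than from the geometry of the conic $\oc$.

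First I would read off the design parameters. The point set of $\A$ is $\C$, which has $q^2$ points. By (A1) each \cpl\ meets $\C$ in a $q$-arc, hence in exactly $q$ points, so every line of $\A$ has exactly $q$ points. By (A2) every pair of points of $\C$ lies in exactly one \cpl, so any two points of $\A$ lie on a unique common line. Since $q\ge 7$, the design is non-degenerate (lines are proper non-empty subsets and there is more than one line), so $\A$ is a $2$-$(q^2,q,1)$ design.

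Next I would apply the standard fact that a $2$-$(q^2,q,1)$ design is an affine plane of order $q$. The replication number is $r=(q^2-1)/(q-1)=q+1$, so each \cpt\ lies on exactly $q+1$ \cpl s; two distinct \cpl s meet in at most one \cpt\ by (A2); and declaring two \cpl s \emph{parallel} when they are equal or disjoint yields an equivalence relation whose classes each partition $\C$. This establishes the affine-plane axioms for $\A$. The stated counts then follow immediately: the number of lines is $b=|\C|\,r/q=q^2(q+1)/q=q^2+q$, and these $q^2+q$ \cpl s fall into $q+1$ parallel classes, each consisting of $q$ mutually parallel \cpl s.

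I do not expect any genuine obstacle here; the result is essentially a restatement of the design-theoretic equivalence. The only points meriting a word of care are that (A1) yields \emph{exactly} $q$ points per \cpl\ (a $q$-arc has $q$ points, not merely at most $q$), and that the resulting design is non-trivial so that the classical equivalence applies. Both are immediate for $q\ge 7$ odd.
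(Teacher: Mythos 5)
Your proposal is correct and follows exactly the paper's own argument: by (A1) and (A2), the incidence structure $\A$ is a $2$-$(q^2,q,1)$ design, hence an affine plane of order $q$, from which the counts of \cpl s and parallel classes follow. The extra care you take (exactness of the $q$-point count from the $q$-arc condition, non-degeneracy for $q\ge 7$) is sound but simply fills in details the paper leaves implicit.
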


\begin{proof}
By (A1) and (A2), $\A$ is a 2-$(q^2,q,1)$ design and so is
an affine plane of order $q$. 
\end{proof}
 
Note that in $\PG(4,q)$, two $\C$-planes $\pi,\alpha$  meet in a point  or a line. The parallel classes of the affine plane $\A$ tell us whether the intersection $\pi\cap\alpha$ contains a $\C$-point. So we have: if $\pi$ and $\alpha$ belong to the same parallel class, then they have no common $\C$-point; whereas if $\pi$ and $\alpha$ belong to different parallel classes, then they share exactly one $\C$-point.

%


Let $\pi$ be a \cpl. By (A1), $\pi$ contains $q$ \cpt s that form a
$q$-arc. As $q$ is odd, this arc uniquely completes to a conic by the addition of a point which we
denote $\pi_\infty$ (see~\cite[Theorem 10.28]{hirs98}).  We call $\pi_\infty$ the {\em $\infty$-point} of $\pi$ since we will show in Lemma~\ref{cplane-meet-theorem} that
$\pi_\infty$ is on the line $\pi\cap\sinfty$, and so is an infinite point
of $\PG(4,q)$. That is:

\begin{defn} Let $\pi$ be a $\C$-plane, then the {\em $\infty$-point} $\pi_\infty$ of $\pi$ is the point that uniquely completes the $q$-arc $\pi\cap\C$ to a conic. 
\end{defn}

To study the intersection of $\C$-planes in more detail, we will need the next lemma involving three $\C$-planes that all contain  a fixed $\infty$-point. 

\begin{lemma}\Label{one-parallel-class}
Let $\pi$ be a \cpl\ with $\infty$-point $\pi_\infty$. Suppose two \cpl s $\alpha,\beta$ both meet
$\pi$ in distinct  lines through $\pi_\infty$.  Then $\alpha$ and $\beta$ share no point other than $\pi_\infty$. 
\end{lemma}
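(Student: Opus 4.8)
The plan is to argue by contradiction, assuming $\alpha\cap\beta$ is a line and forcing a configuration that violates the axioms. First I would record the incidence facts. Since $\pi_\infty\in\alpha\cap\beta$ and two distinct planes of $\PG(4,q)$ meet in a point or a line, it suffices to exclude the line case; so suppose $\alpha\cap\beta=m$ is a line (necessarily through $\pi_\infty$), and write $\ell_\alpha=\alpha\cap\pi$, $\ell_\beta=\beta\cap\pi$. I would check $m\ne\ell_\alpha,\ell_\beta$ and $m\cap\pi=\{\pi_\infty\}$: indeed $m\subseteq\alpha$ and $m\subseteq\beta$ give $m\cap\pi\subseteq\ell_\alpha\cap\ell_\beta=\{\pi_\infty\}$, while $m=\ell_\alpha$ would force $\ell_\alpha\subseteq\beta$, hence $\ell_\alpha=\ell_\beta$. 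As $\ell_\alpha,\ell_\beta$ span $\pi$ and $m,\ell_\beta$ span $\beta$, all three planes lie in the solid $H=\langle\pi,\alpha\rangle\cong\PG(3,q)$, and the three intersection lines $\ell_\alpha,\ell_\beta,m$ are concurrent at $\pi_\infty$. I would also note that $\pi_\infty\notin\C$ yet lies on the three \cpl s $\pi,\alpha,\beta$; by Lemma~\ref{affine-plane} a \cpt\ lies on $q+1$ \cpl s, whereas (A3) puts an affine point that is not a \cpt\ on at most two, so $\pi_\infty$ must be a point of $\si$.

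Next I would transfer the conic in $\pi$ into $H$. The $q+1$ planes of $H$ through $m$ partition $H\setminus m$, and each meets $\pi$ in one of the $q+1$ lines through $\pi_\infty$. Since $\pi_\infty$ lies on the conic completing $\pi\cap\C$, exactly one of these lines is the tangent at $\pi_\infty$ (carrying no \cpt) and the other $q$ are secants, each carrying exactly one point of $\pi\cap\C$. Hence the $q$ points of $\pi\cap\C$ lie in $q$ distinct planes through $m$; writing $A=\ell_\alpha\cap\C$ and $B=\ell_\beta\cap\C$ for the \cpt s on the two secants $\ell_\alpha,\ell_\beta$, one gets $\alpha=\langle m,A\rangle$ and $\beta=\langle m,B\rangle$. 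Throughout I would use the remark after Lemma~\ref{affine-plane}, that two \cpl s share at most one \cpt, together with the fact that no line meets $\C$ in more than two points (otherwise the \cpl\ on two collinear \cpt s would contain a third, against (A1)).

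The crux, and the step I expect to be the genuine obstacle, is producing the final contradiction from this configuration. The strategy is to exhibit either a single affine non-\cpt\ point on three \cpl s (forbidden by (A3)), or a plane of $H$ carrying at least five \cpt s: such a plane is a \cpl\ by (A1), and if it contains two points of $\alpha\cap\C$ (or of $\beta\cap\C$) it shares two \cpt s with $\alpha$ (resp.\ $\beta$), contradicting the remark after Lemma~\ref{affine-plane}. The difficulty is that the three known arcs are too dispersed to force this by crude counting: $\alpha\cap\C$ stays inside $\alpha$, $\beta\cap\C$ inside $\beta$, and $\pi\cap\C$ contributes only one point to each secant-plane through $m$, so a naive pigeonhole over the planes through $m$ falls exactly one \cpt\ short of five. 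To close the gap I would instead exploit the conic geometry in $\alpha$ and $\beta$ directly, locating the completion points $\alpha_\infty,\beta_\infty$ relative to the concurrent lines $m,\ell_\alpha,\ell_\beta$ to force an unavoidable extra incidence; and I would treat separately the degenerate possibility $m\subseteq\si$, where (A3) is silent because every point of $m$ is infinite, ruling it out from the fact that $\alpha\cap\C$ and $\beta\cap\C$ would then be two affine $q$-arcs with the same infinite line $m$ each meeting $\pi$ in a secant through $\pi_\infty$. I expect this geometric step, rather than the dimension bookkeeping, to be the heart of the proof.
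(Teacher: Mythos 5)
Your setup is sound and matches the paper's opening moves: you reduce to the case where $\alpha\cap\beta$ is a line through $\pi_\infty$, deduce $\pi_\infty\in\si$ from (A3), place $\pi$, $\alpha$, $\beta$ in the solid $\langle\pi,\alpha\rangle$, and you even identify the correct target contradiction, namely a plane containing five \cpt s, which by (A1) is a \cpl\ and then shares two \cpt s with $\alpha$ or $\beta$, violating (A2). But the proposal has a genuine gap exactly where you say it does: you never construct such a plane, and your fallback (``exploit the conic geometry\ldots\ to force an unavoidable extra incidence'') is a hope, not an argument. The reason your pigeonhole fails is that you search in the wrong pencil: a plane through the line $\alpha\cap\beta$ (other than $\alpha,\beta$ themselves) meets each of $\alpha$ and $\beta$ in that line only, so it can never accumulate five \cpt s. A second, smaller flaw: you assert both $\ell_\alpha$ and $\ell_\beta$ carry a \cpt, but one of them could be the tangent to the conic of $\pi$ at $\pi_\infty$; the paper only claims, and only needs, that at least one does.

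The missing idea in the paper is to switch pencils. Choose $B\in(\beta\cap\pi)\cap\C$ (possible since at most one of the two lines is the $0$-secant through $\pi_\infty$). Let $R$ be the point where the tangent at $B$ to the conic of $\pi$ meets $\alpha\cap\pi$, and $S$ the point where the tangent at $B$ to the conic of $\beta$ meets $\alpha\cap\beta$. Using $q\ge7$, a short combinatorial argument on five points of the arc $\alpha\cap\C$ produces a $2$-secant $m$ of $\alpha\cap\C$ that misses $R$, $S$, $\pi_\infty$ and misses the \cpt s (if any) of $\alpha\cap\beta$ and $\alpha\cap\pi$. Now set $\tau=\langle m,B\rangle$. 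Since everything lies in the solid, $\tau$ meets $\pi$ and $\beta$ in lines through $B$; and since $B\notin\alpha$ forces $\tau\cap\alpha=m$, the points $\pi_\infty,R,S$ are not in $\tau$, so $\tau\cap\pi$ avoids both $1$-secants of $\pi\cap\C$ through $B$ and $\tau\cap\beta$ avoids both $1$-secants of $\beta\cap\C$ through $B$; hence each is a $2$-secant. Thus $\tau$ contains five \cpt s (two on $m$, the point $B$, one more in $\pi$, one more in $\beta$), so $\tau$ is a \cpl\ by (A1), yet it shares the two \cpt s of $m$ with $\alpha$, contradicting (A2). This auxiliary plane, spanned by a well-chosen $2$-secant of one arc and a \cpt\ of another plane, is precisely the heart you were missing; note also that it works verbatim whether $\alpha\cap\beta$ is affine or lies in $\si$, so your proposed separate treatment of the case $\alpha\cap\beta\subseteq\si$ is unnecessary.
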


\begin{proof}
Let $\pi,\alpha,\beta$ be $\C$-planes such that $\alpha\cap\pi$, $\beta\cap\pi$ are distinct lines through $\pi_\infty$.
Note that if $\pi_\infty\not\in\Sigma_\infty$, then $\pi_\infty$ is on three
\cpl s, namely $\pi$, $\alpha$ and $\beta$, contradicting (A3).  So we have $\pi_\infty\in\Sigma_\infty$.  
Suppose $\alpha$ and $\beta$ meet in a line $\ell$ (so $\ell$ contains
$\pi_\infty$). The line $\ell$ contains at most one \cpt\ by
(A2).
Note that the $q+1$ lines of $\pi$ through $\pi_\infty$ consist of $q$
1-secants to $\C$, and one 0-secant to $\C$. Hence at least one of the
 lines $\alpha\cap\pi$, $\beta\cap\pi$  is a 1-secant of $\C$. Without loss of generality, suppose $\pi\cap\beta$ meets $\C$
in the point $B$. As $\C$ meets $\pi$ in a $q$-arc, there are two 1-secants to $\C$ through $B$ in $\pi$, one is
$B\pi_\infty$, let the other meet $\pi\cap\alpha$ in the point $R$. In the plane
$\beta$, there are two 1-secants to $\C$ through $B$, one is
$B\pi_\infty$, let the other meet $\beta\cap\alpha$ in the point $S$.

We now show that there exists a 2-secant $m$ of $\alpha\cap\C$ which does not contain the
points $S, R$ or $\pi_\infty$, and which does not meet $\ell$ or $\alpha\cap\pi$
in a \cpt.
As $q\geq7$, the $q$-arc $\alpha\cap\C$ has at least five points
$X,Y,Z,V,W$ which do not lie on $\ell$ or on $\alpha\cap\pi$ (each of which contain at most one $\C$-point). A 2-secant
through two of these points will meet $\ell$ and $\alpha\cap\pi$ in points
that are not in $\C$. 
First consider the three 2-secants $XY,XZ,YZ$. If none of these is
suitable, then each contains one of $R$, $S$, 
$\pi_\infty$. So next consider the three 2-secants $XW,YW,ZW$. If none of
these is suitable, then each contains one of $R,S,\pi_\infty$. Without loss
of generality suppose $R\in XW$, $S\in YW$ and $\pi_\infty\in ZW$. 
Hence we also have $R\in YZ$, $S\in XZ$, and $\pi_\infty\in XY$.
Now consider the 2-secant $VX$.  
If $S\in VX$, then 
$X,Z,V$ would be collinear, contradicting $\alpha\cap\C$ being an arc. 
So $S\notin VX$. 
 Similarly $R\not\in VX$ and $\pi_\infty\not\in VX$.  So $m=VX$ is a
 2-secant of $\C$ not containing $R,S$ or $\pi_\infty$, and meeting $\ell$ and $\alpha\cap\pi$ in 
 points not in $\C$. 

So there exists a line $m$ in $\alpha$ that contains two $\C$-points, but does not contain the points $S, R$ or $\pi_\infty$, and  does not meet $\ell$ or $\alpha\cap\pi$ in a \cpt.
  Consider the plane  $\tau=\langle m,B\rangle$.  Now, $\tau$ meets $\beta$ in a line
through $B$ that is not a 1-secant of $\C$ as
$\pi_\infty,S\notin\tau$. Hence $\tau$ contains two points of
$\beta\cap\C$ (one of which is $B$). Similarly, $\tau$ meets $\pi$ in two
points of $\pi\cap\C$, one of which is $B$. Hence $\tau$ contains five
distinct points of $\C$, and so is a \cpl\ by (A1). Hence $m$ contains
two $\C$-points which lie on two \cpl s, namely $\alpha$ and $\tau$, 
contradicting (A2).  Hence $\alpha$ and $\beta$ cannot meet in a line $\ell$, so they share no point other than $\pi_\infty$.
\end{proof}

The next lemma is key to our proof. We first
show that for each \cpl\ $\pi$, the  $\infty$-point $\pi_\infty$ of
the $q$-arc  $\pi\cap\C$ lies in $\si$. Then we study how
\cpl s can meet, and investigate properties of the parallel classes of the affine plane $\A$ defined in Lemma~\ref{affine-plane}.
We use the following definition.

\begin{defn}
If $\pi$ is a \cpl, then
we call the line $\pi\cap\sinfty$ a {\em \cln}. 
\end{defn}

\begin{lemma}\Label{cplane-meet-theorem}
\begin{enumerate}
\item If $\pi$ is a $\C$-plane, then its $\infty$-point $\pi_\infty$ lies in $\si$. 
\item Let $\pi,\alpha$ be distinct $\C$-planes.
\begin{enumerate}
\item If $\pi,\alpha$ lie in the same parallel class, then they meet in exactly one point $\pi_\infty$ (which is equal to $\alpha_\infty$).
\item If $\pi,\alpha$ lie in different parallel classes, then they either meet in exactly one point of $\C$, or they meet in an affine line through $\pi_\infty$ 
 (which is equal to $\alpha_\infty$) that contains
  one point of $\C$.
\end{enumerate}
\item The \cpl s  which are in a common parallel class all share
  the same \infpt, and they pairwise intersect in only this
  \infpt.  Moreover, every \infpt\ defines exactly two
  parallel classes.
\end{enumerate}
\end{lemma}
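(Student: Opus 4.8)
The plan is to prove part~1 first, since it is the linchpin: once every \infpt\ is known to lie in $\si$, the intersection patterns in parts~2 and~3 are essentially forced. Throughout write $\O_\pi=(\pi\cap\C)\cup\{\pi_\infty\}$ for the conic completing the $q$-arc of a \cpl\ $\pi$. As $\pi\cap\C$ already has $q$ points, $\pi_\infty\notin\C$, so by (A3) the point $\pi_\infty$ lies on $0$ or $2$ \cpl s; since $\pi$ itself contains it, \emph{if $\pi_\infty$ were affine it would lie on exactly one further \cpl\ $\pi'$}. The useful reformulation is this: the \cln\ $\ell^\ast=\pi\cap\si$ carries no \cpt\ (as $\C$ is affine), so $\ell^\ast$ is a $0$-secant of $\O_\pi$, and the only $0$-secant through the conic point $\pi_\infty$ is the tangent there. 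Thus part~1 is equivalent to showing that $\ell^\ast$ is the tangent to $\O_\pi$ at $\pi_\infty$ rather than an external line, i.e.\ that $\pi_\infty\in\ell^\ast\subset\si$.

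The main obstacle is exactly this part~1. I would argue by contradiction, assuming $\pi_\infty$ affine and hence on precisely the two \cpl s $\pi,\pi'$. The difficulty is that (A1)--(A3) refer to $\si$ only through the word ``affine'', so a purely local count around $\pi_\infty$ is consistent with either position; one must bring in the global arc structure. Concretely I would study the pencil of $q+1$ lines of $\pi$ through $\pi_\infty$ (the $q$ one-secants $P_i\pi_\infty$ and the tangent), the \cpl s meeting $\pi$ along these lines, and the second plane $\pi'$, and extract a contradiction from a counting/incidence argument, using $q\ge7$ to guarantee enough secants of the arcs involved (as in Lemma~\ref{one-parallel-class}). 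I expect this count to be the technical heart of the whole theorem.

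Granting part~1, the \cln\ $\pi\cap\si$ is the tangent to $\O_\pi$ at $\pi_\infty$; consequently any line of $\pi$ carrying a single \cpt\ $P$ is either the tangent to $\O_\pi$ at $P$ or the secant $P\pi_\infty$, and it passes through $\pi_\infty$ exactly in the latter case. For part~2 I split by the note after Lemma~\ref{affine-plane}. When $\pi,\alpha$ lie in different classes they share a unique \cpt\ $P$; if $\pi\cap\alpha=\{P\}$ we are done, and otherwise $\pi\cap\alpha$ is a line $\ell$ carrying only $P$. The subtle point, which I expect to need a short incidence argument together with $q\ge7$, is to exclude the possibility that $\ell$ is the tangent to $\O_\pi$ at $P$; once excluded, $\ell=P\pi_\infty$, and the symmetric analysis in $\alpha$ gives $\ell=P\alpha_\infty$, so that $\pi_\infty=\alpha_\infty$ is the unique infinite point of $\ell$, which is case 2(b). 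Case 2(a), for $\pi,\alpha$ in the same class, I would reduce to 2(b): for \cpt s $P\in\pi$ and $Q\in\alpha$ the unique \cpl\ $\tau$ through $P,Q$ lies in classes different from both, so by 2(b) it meets $\pi$ and $\alpha$ in lines through $\tau_\infty$; choosing $P,Q$ (with $q\ge7$) so that $\tau$ meets \emph{both} in lines forces $\pi_\infty=\tau_\infty=\alpha_\infty$, after which a final short argument rules out $\pi,\alpha$ sharing more than this one common point.

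Part~3 is then clean. Its first statement follows from 2(a): the pairwise equality of \infpt s across a parallel class shows that all its \cpl s share a single \infpt\ $Z\in\si$ and meet pairwise only in $Z$. For the claim of ``exactly two parallel classes'' per \infpt\ I argue both bounds. For the upper bound, if three classes shared $Z$ then, using that the $q$ planes of a class biject with the \cpt s of a fixed plane $\pi_1$ of another class (two planes of one class share no \cpt, while a plane of a different class shares exactly one), I can choose planes $\pi_2,\pi_3$ from the other two classes meeting $\pi_1$ in \emph{distinct} one-secants through $Z$; Lemma~\ref{one-parallel-class} then forces $\pi_2\cap\pi_3=\{Z\}$, contradicting that different-class planes share a \cpt. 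For the lower bound, given a \cpt\ $P\in\pi_1$ take an affine non-\cpt\ point $W$ on the one-secant $PZ$; by (A3) $W$ lies on $\pi_1$ and exactly one further \cpl\ $\beta$, which by 2(a) is in another class and whose intersection with $\pi_1$ must, by 2(b), be a line through a \cpt\ and through $Z$, hence the line $PZ$ itself, so $\beta_\infty=Z$ and $\beta$ supplies a second class. As there are $q+1$ classes by Lemma~\ref{affine-plane} and $q$ is odd, this leaves $(q+1)/2$ distinct \infpt s.
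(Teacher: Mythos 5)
Your overall architecture (part 1 first, then 2, then 3) matches the paper's, and your part 3, granted parts 1 and 2, is essentially the paper's own argument and is sound. But the proposal has a genuine gap: the steps you explicitly defer are precisely the technical content of the lemma. For part 1 you write that you ``would study the pencil of $q+1$ lines of $\pi$ through $\pi_\infty$ \ldots and extract a contradiction from a counting/incidence argument''; that is a restatement of the goal, not a proof. The paper's actual argument is not a local count around $\pi_\infty$ but a global covering argument: by (A3) each of the $q^2-q$ affine points of $\pi\setminus\C$ lies on exactly one further \cpl, so one classifies how every other \cpl\ meets $\pi$ (same-class planes share no \cpt\ with $\pi$, other planes share exactly one), and then two separate counting arguments are needed. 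First, no plane in $\pi$'s own parallel class can meet $\pi$ in an affine $0$-secant $b$: all other cover lines would have to pass through $X=b\cap\si$, and the two-secants of $\pi\cap\C$ through $X$ carry $2(q-2)$ points that only the $q-2$ remaining same-class planes could cover, one point each --- a contradiction. Second, the resulting $\geq q-1$ cover lines arising from planes outside $\pi$'s class, which are $1$-secants, cannot be tangent lines of the conic: a tangent cover line would leave at most three admissible positions for the other $\geq q-3\geq 4$ cover lines. Only after both counts does one get $q$ \cpl s through $\pi_\infty$, whence $\pi_\infty\in\si$ by (A3). None of this is recoverable from your sketch.

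The same deferral infects part 2. In 2(b), the exclusion of the tangent (your ``short incidence argument'') is exactly the second counting argument above; it is not local to the pair $\pi,\alpha$, and the paper obtains it only from the global classification (statement II in its proof) together with Lemma~\ref{one-parallel-class}. In 2(a), your reduction requires points $P\in\pi\cap\C$, $Q\in\alpha\cap\C$ whose joining \cpl\ $\tau$ meets \emph{both} $\pi$ and $\alpha$ in lines, and you give no reason such a pair exists. Identifying which planes meet $\pi$ in a line at all (exactly the $q$ planes of one specific parallel class, one for each line $A\pi_\infty$, $A\in\pi\cap\C$) is again the global structure you have not yet established; indeed, without 2(a) one cannot even rule out that the second plane through an affine point of $P\pi_\infty$ is a same-class plane meeting $\pi$ in just that point, so the piecemeal reduction is circular as it stands. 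In short: the skeleton and part 3 are right, but parts 1, 2(a) and 2(b) are announced rather than proven, and they are where all the work of this lemma lies.
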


\begin{proof}
Let $\pi$ be a fixed \cpl\ and let $\pi_\C=\pi\cap\C$. We say a \cpl\ $\alpha$ (distinct from $\pi$) {\em covers} an affine point $P$ of
$\pi\bs\pi_\C$ if it contains $P$. 
Further an affine line of $\pi$ is called a {\em cover line} of $\pi$ if it is
contained in a \cpl\ distinct from $\pi$.

We work with the parallel classes of the affine plane $\A$ defined in Lemma~\ref{affine-plane}. The $q$ points of $\pi_\C$ each lie on $q$ further $\C$-planes (one in each of the parallel classes not containing $\pi$). The $q^2-q$ affine points of  $\pi\bs\pi_\C$ each lie in exactly one further $\C$-plane by (A3). 
That is, each of the $q^2-q$ affine points of  $\pi\bs\pi_\C$ is covered by  exactly one $\C$-plane. We will investigate how the $\C$-planes cover these points. First we look at how another $\C$-plane $\alpha$ meets $\pi$. Now $\alpha\cap\pi$ is either a point or a line of $\PG(4,q)$. If $\alpha,\pi$ are in different parallel classes, then $\alpha,\pi$ contain exactly one common $\C$-point, so $\alpha\cap\pi$ is either a $\C$-point, or an affine 1-secant of $\C$. If $\alpha,\pi$ are in the same parallel class, then $\alpha,\pi$ contain no common $\C$-point.
Let $${\mathscr P}=\{\beta_1,\ldots,\beta_{q-1}\}$$ be the $\C$-planes in the same parallel class as $\pi$. Then we have:
\begin{itemize}\item[I.] 
\begin{itemize}
\item[(a)] If $\alpha\in{\mathscr P}$, then $\alpha$ meets $\pi$ in either a point of $\pi\setminus\pi_\C$ (possibly in $\si$); or an affine 0-secant of $\pi_\C$; or an infinite line.
\item[(b)] If $\alpha\not\in{\mathscr P}$, then $\alpha$ meets $\pi$ in either exactly one $\C$-point, or in an affine 1-secant of $\pi_\C$.
\end{itemize}
\end{itemize}

We now focus on I(a) and show that no \cpl\ in $\mathscr P$ meets $\pi$
in an affine line.  Suppose there is one such plane
$\beta\in\mathscr P$
meeting $\pi$ in the affine line $b$, so $b$ is a 0-secant of $\pi_\C$ (we work towards a contradiction to show that $b$ cannot exist). 
No other cover line in $\pi$ can meet $b$ in an affine point, otherwise
that point would be on three \cpl s, contradicting (A3). Hence every other cover line of $\pi$  (if
any) meets $b$ in the point $X=b\cap\si$. 

Let $\mathcal L$ be the set of $q$ affine lines of $\pi$ through $X$. There are four possibilities for these lines, depending on the location of the $\infty$-point $\pi_\infty$. Firstly, suppose $\pi_\infty\in\si$. Note that $\pi_\infty$ lies on exactly one 0-secant of $\pi_\C$, which in this case is  $\pi\cap\si$. Hence $\pi_\infty\neq X$, as otherwise $X$ lies on two 0-secants of $\pi_\C$, namely $\pi\cap\si$ and $b$. Thus the $q$ lines of $\mathcal L$ consist of one 1-secant, 
 $\frac{q-1}2$ 2-secants,
 and $\frac{q-1}2$ 0-secants of $\pi_\C$.  Secondly, if 
 $\pi_\infty\not\in\sinfty$ and $X\pi_\infty$ is a 0-secant of $\pi_\C$, then $\mathcal L$ contains one 1-secant, $\frac{q-1}2$
 2-secants and $\frac{q-1}2$ 0-secants of $\pi_\C$. Thirdly, if $\pi_\infty\not\in\sinfty$ and $X\pi_\infty$
 is a 1-secant of $\pi_\C$, then either $\mathcal L$ contains three 1-secants, $\frac{q-3}2$  2-secants and $\frac{q-3}2$ 0-secants of $\pi_\C$, or $\mathcal L$ contains one 1-secant, $\frac{q-1}2$  2-secants and $\frac{q-1}2$ 0-secants of $\pi_\C$.

Let $\mathcal L'\subset\mathcal L$ be the set of 2-secants of $\pi_\C$ through $X$. In each of the four cases, $\mathcal L'$ contains either $\frac{q-1}2$ or  $\frac{q-3}2$ lines. As $q\geq7$, $\mathcal L'$ contains at least two lines $\ell,m$. By I, the affine cover lines of $\pi$ are either 0-secants or 1-secants of $\pi_\C$, so $\ell$ and $m$ are not cover lines. Further, we noted above that all cover lines must contain $X$. Hence the affine points of $\ell,m$ not in $\C$ are not contained in any cover lines. Thus by I,  the affine points of $\ell,m$ not in $\C$ must be covered by planes in ${\mathscr P}\setminus \{\beta\}$, with each plane covering at most one affine point. 
However, there are $2(q-2)$ such affine points and only $q-2$ \cpl s in ${\mathscr P}\setminus \{\beta\}$, a contradiction. 
Hence the line  
$b$ does not exist, so no \cpl\ in $\mathscr
P$ meets $\pi$ in an affine line. Hence the following modification of I(a) holds:
\begin{itemize}\item[I(a)$'$] If $\alpha\in{\mathscr P}$, then  $\alpha$ meets $\pi$ in either  a point of $\pi\setminus\pi_\C$, or in an infinite line. 
\end{itemize}

We now show that $\pi_\infty\in\si$. By I(a)$'$, \cpl s in $\mathscr P$ meet $\pi$ in either 0 or 1 affine
points. 
Hence the \cpl s
in $\mathscr P$ 
cover at most $q-1$ of the $q^2-q$ affine points of
$\pi\setminus\pi_\C$. Recall that each of these affine points is covered by exactly one $\C$-plane. 
By I(b), \cpl s not in ${\mathscr P}$ either meet $\pi$ in exactly a \cpt\ or in a 1-secant of $\pi_\C$.
So there are at least
$((q^2-q)-(q-1))/(q-1)=q-1$ \cpl s not
in $\mathscr P$ that meet $\pi$ in
a cover line, denote them by  $$\mathscr
Q=\{\alpha_1,\ldots,\alpha_{q-1}\}.$$  
For each $i$, 
$\alpha_i$ and $\pi$ have exactly one common $\C$-point, denoted $A_i$. 
As the cover line $\alpha_i\cap\pi$ is a 1-secant of the $q$-arc $\pi_\C$ in $\pi$, it is either the line $A_i\pi_\infty$, or it is the unique tangent line at $A_i$  to the conic $\pi_\C\cup\pi_\infty$, denote this tangent line by $\ell_i$. Suppose one of the cover lines $\alpha_1\cap\pi$ is the unique tangent line $\ell_1$ (we work towards a contradiction to show this cannot happen). Note that two cover lines cannot meet in an affine point of $\pi\setminus\pi_\C$ by (A3). 
As the cover lines  $\alpha_2\cap\pi,\ldots,\alpha_{q-1}\cap\pi$ are all 1-secants, they either meet $\ell_1$ in $A_1$ (there is at most one other such 1-secant, namely $A_1\pi_\infty$) or in a 1-secant through $\ell_1\cap\si$ (there are at most two other such 1-secants through $\ell_1\cap\si$). This gives at most three other possibilities for cover lines, contradicting $|\mathscr Q\setminus\alpha_1|=(q-1)-1\geq 5$. Thus no tangent line $\ell_i$ is a cover line, so $A_i\pi_\infty$ is a cover line for $i=1,\ldots,q-1$.  
That is,  $\pi_\infty$ lies on $q$ $\C$-planes, so by (A3), $\pi_\infty$ lies in $\pi\cap\si$. This completes part 1.

We now further improve I(a)$'$ and show that the $\C$-planes in $\mathscr P$ either meet $\pi$ in the infinite point $\pi_\infty$, or in an infinite line. First note that we have shown that the $q-1$ \cpl s in $\mathscr Q$ all contain $\pi_\infty$. Hence by
Lemma~\ref{one-parallel-class}, these planes  pairwise meet only in
$\pi_\infty$. In particular, they pairwise have no common $\C$-point. Hence the {\em planes in ${\mathscr Q}$ all belong to the same parallel class}, of which there is one more member, $\alpha_q$ say. Denote by $\mathscr X$ the set of  $q-1$ affine points of $\pi\bs\pi_\C$
that are not covered by the \cpl s in $\mathscr Q$.
The points of $\mathscr X$ lie on a line through $\pi_\infty$. We will show that $\alpha_q$ meets
$\pi$ in this line. 

Consider a plane $\alpha_1\in\mathscr Q$. A similar
argument to that of part 1 shows that the $\infty$-point $(\alpha_1)_\infty\in\alpha_1\cap\si$. If
$\alpha_1$ and $\pi$ share \cln s, that is 
$\alpha_1\cap\si=\pi\cap\si$, then $\alpha_1,\pi$ must coincide since they share two lines, namely their \cln\ and an affine line.  Hence their \cln s are distinct, and a similar argument to that of part 1 shows that $\pi$ meets $\alpha_1$ in a line through $(\alpha_1)_\infty$, hence $(\alpha_1)_\infty=\pi_\infty$. Next, similar to
the set $\mathscr Q$ related to the \cpl\ $\pi$, consider the 
set $\mathscr T$ of at least $q-1$ \cpl s that each meet $\alpha_1$ in a line through $(\alpha_1)_\infty$. By the above argument, they lie in a common parallel class. As $\pi\in\mathscr T$, it follows that the parallel class containing $\mathscr T$ is the
parallel class $\mathscr P\cup\pi$ (since every \cpl\ belongs to exactly one parallel class). 
Hence the (at least) $q-2$ planes $\beta_i\in\mathscr P\cap\mathscr T$
satisfy $(\beta_i)_\infty=(\alpha_1)_\infty=\pi_\infty$. Thus at least $q-2$ of the 
\cpl s in $\mathscr P$ meet $\pi$ exactly in the point $\pi_\infty$. 
In particular, at least $q-2$ of the $\beta_i$ contain no affine points of $\pi$. Hence there are at
least $q-2$ affine
points in the set $\mathscr X$ that are not covered by planes in $\mathscr
P$. Hence by I, there is a $\C$-plane $\alpha$ not in $\mathscr P$ that meets $\pi$ in a 1-secant of $\pi_\C$ through $\pi_\infty$. This line covers $q-1$ affine points, so covers all the points of $\mathscr X$. Note that by Lemma~\ref{one-parallel-class}, $\alpha$  belongs to the
parallel class containing $\mathscr Q$, so $\alpha=\alpha_q$. In summary, we have:
\begin{enumerate}\item[II.] 
\begin{itemize}
\item $\C$-planes in the parallel class ${\mathscr P}\cup {\pi}$ all contain $\pi_\infty$, and pairwise contain no common affine point;
\item 
Each $\C$-plane in the parallel class ${\mathscr Q}\cup\alpha_q$ meets $\pi$ in a 1-secant through $\pi_\infty$;
\item the remaining $\C$-planes each meet $\pi$ in exactly one $\C$-point.
\end{itemize}
\end{enumerate}

To complete the proof of part 2, we need to show that $\C$-planes in $\mathscr P$ meet $\pi$ in exactly $\pi_\infty$. 
Suppose not, that is, by II, suppose that there is a $\C$-plane $\beta_1$ in $\mathscr P$ that meets $\pi$ in an infinite line, so $\beta_1\cap\pi=\pi\cap\si$. Let $\alpha\in\mathscr Q\cup\alpha_q$, so $\alpha$ meets $\pi$ in a 1-secant through $\pi_\infty$. As $\pi,\beta_1$ are in the same parallel class, $\beta_1$ and $\alpha$ lie in different parallel classes, so have a common $\C$-point, $A$ say. Then the line $A\pi_\infty$ lies in both $\beta$ and $\alpha$, contradicting 
 Lemma~\ref{one-parallel-class}.
 Hence $\C$-planes in the parallel class $\mathscr P\cup\pi$ pairwise meet in exactly the point $\pi_\infty$. 
  
Hence we have shown that \cpl s in the same parallel class 
have the same $\infty$-point, and pairwise meet in exactly this point. To prove the second statement of part 3, we 
note that by II, the $\C$-planes in the parallel classes $\mathscr P\cup\pi$ and $\mathscr Q\cup\alpha_q$ all contain the point $\pi_\infty$. Further, the remaining $\C$-planes do not contain $\pi_\infty$. So $\pi_\infty$ lies on the $\C$-planes of precisely two parallel classes.
\end{proof}

This lemma shows that two $\C$-planes cannot meet in a $\C$-line, so we have:

\begin{corollary}\Label{count-cline} Each $\C$-plane has a unique $\C$-line, so there are $q^2+q$ distinct $\C$-lines in $\si$.
\end{corollary}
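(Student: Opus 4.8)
The plan is to establish that the assignment sending each \cpl\ $\pi$ to its \cln\ $\pi\cap\si$ is injective, and then multiply by the known count of \cpl s. Each \cpl\ $\pi$ meets $\si$ in exactly one line, since $\pi$ is an affine plane and $\si$ is a hyperplane of $\PG(4,q)$; so the \cln\ of $\pi$ exists and is unique. This is the first half of the statement, and it is immediate from the definition.

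For the counting half, I would argue by contradiction. Suppose two distinct \cpl s $\pi,\alpha$ shared the same \cln, that is $\pi\cap\si=\alpha\cap\si=:\ell$. Then both $\pi$ and $\alpha$ contain the infinite line $\ell$, so $\pi\cap\alpha$ contains $\ell$ and hence is an infinite line. But Lemma~\ref{cplane-meet-theorem}(2) describes precisely how two distinct \cpl s meet: if they lie in the same parallel class they meet only in the single point $\pi_\infty$, and if they lie in different parallel classes they meet either in a single $\C$-point or in an affine line through $\pi_\infty$. In no case do two distinct \cpl s meet in an infinite line. This contradiction forces $\pi=\alpha$, so distinct \cpl s determine distinct \cln s. Since Lemma~\ref{affine-plane} gives exactly $q^2+q$ \cpl s, there are exactly $q^2+q$ distinct \cln s in $\si$.

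There is essentially no obstacle here: the content is carried entirely by Lemma~\ref{cplane-meet-theorem}, which already rules out two \cpl s meeting in an infinite line, together with the enumeration of \cpl s from Lemma~\ref{affine-plane}. The only point requiring care is the elementary observation that two \cpl s sharing a \cln\ is the same as their intersection containing that infinite line, which is exactly the configuration excluded by the case analysis of Lemma~\ref{cplane-meet-theorem}.
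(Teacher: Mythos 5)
Your proof is correct and is essentially the paper's own argument: the paper deduces the corollary in one line from the fact that Lemma~\ref{cplane-meet-theorem}(2) leaves no case in which two distinct $\C$-planes meet in an infinite line, and then counts the $\C$-planes via Lemma~\ref{affine-plane}. You have simply spelled out the same injectivity argument (two distinct planes sharing their $\C$-line would intersect exactly in that infinite line, which the lemma forbids) in more detail.
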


We now characterise the points of $\Sigma_\infty$ in relation to the
\cln s.

\begin{defn} A point of $\Sigma_\infty$ that lies on no \cln\ is called a
{\em 0-point}.
\end{defn}

\begin{lemma}\Label{lem:pttype}
There are three types of points in  $\Sigma_\infty$: ${q+1\over 2}$
\infpt s, ${q+1\over 2}$ 0-points, and $q^3+q^2$ points which lie on
exactly one \cln\ each.
\end{lemma}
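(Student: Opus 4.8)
The plan is to classify each point of $\si$ by the number of \cln s through it, and to show this number is $2q$ at the \infpt s and $0$ or $1$ everywhere else. First I would record two consequences of Lemma~\ref{cplane-meet-theorem}(3) and Lemma~\ref{affine-plane}. There are $q+1$ parallel classes, each carrying a single \infpt, and each \infpt\ is the common point of exactly two parallel classes; double-counting the incidences between parallel classes and \infpt s gives $q+1=2a$, where $a$ is the number of \infpt s, so $a=\frac{q+1}2$ (using that $q$ is odd). Moreover each \infpt\ $u$ is the \infpt\ of exactly $2q$ \cpl s, namely the $q$ planes in each of its two parallel classes, and by Corollary~\ref{count-cline} these give $2q$ distinct \cln s through $u$.

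The key step is a single geometric observation: if $\pi_1,\pi_2$ are distinct \cpl s with $(\pi_1)_\infty\neq(\pi_2)_\infty$, then by Lemma~\ref{cplane-meet-theorem}(2) they lie in different parallel classes and cannot meet in a line (either alternative would force $(\pi_1)_\infty=(\pi_2)_\infty$), so $\pi_1\cap\pi_2$ is a single affine \cpt. I would apply this twice. To bound the \cln s at an \infpt: if an \infpt\ $u$ lay on the \cln\ of a \cpl\ $\pi$ with $u\neq\pi_\infty$, then choosing a \cpl\ $\sigma$ with $\sigma_\infty=u$ produces two planes with distinct \infpt s meeting in an affine \cpt, yet $u$ lies in both $\pi$ and $\sigma$, forcing $u$ to be affine --- a contradiction. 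Hence $u$ lies on exactly the $2q$ \cln s found above. For the remaining points: if a point $P$ of $\si$ that is not an \infpt\ lay on the \cln s of two distinct \cpl s $\pi_1,\pi_2$, then either $(\pi_1)_\infty=(\pi_2)_\infty=u$, so the two distinct \cln s, each passing through both $u$ and $P$, meet in a single point and hence $P=u$ is an \infpt; or $(\pi_1)_\infty\neq(\pi_2)_\infty$, in which case $\pi_1\cap\pi_2$ is a single affine \cpt\ that cannot contain the infinite point $P$. Either way we contradict the choice of $P$, so every non-\infpt\ lies on at most one \cln.

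The proof then finishes by a double count. By Corollary~\ref{count-cline} there are $q^2+q$ \cln s, each a line of $\si\cong\PG(3,q)$ and hence carrying $q+1$ points, giving $(q^2+q)(q+1)$ point--line incidences. The \infpt s account for $\frac{q+1}2\cdot 2q=q^2+q$ of these, so the non-\infpt s carry $(q^2+q)(q+1)-(q^2+q)=q^3+q^2$ incidences; since each non-\infpt\ meets at most one \cln, exactly $q^3+q^2$ of them lie on one \cln\ and the rest on none. Finally, as $\si$ has $q^3+q^2+q+1$ points, the number of 0-points is $(q^3+q^2+q+1)-\frac{q+1}2-(q^3+q^2)=\frac{q+1}2$, and the three types are genuinely distinct since the incidence counts $2q$, $1$, $0$ are pairwise different. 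The only real work is the geometric observation; everything else is bookkeeping.
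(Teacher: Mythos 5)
Your proof is correct and follows essentially the same route as the paper: both rest on Lemma~\ref{cplane-meet-theorem}(2) to show two \cln s can only meet in a common \infpt, use Lemma~\ref{cplane-meet-theorem}(3) with the $q+1$ parallel classes to count $\frac{q+1}{2}$ \infpt s, and obtain the 0-point count by subtraction. The only (harmless) differences are bookkeeping: you run a global incidence double count (and explicitly establish the $2q$ \cln s per \infpt, which the paper records separately as Corollary~\ref{cline-infty}), whereas the paper counts the $q$ non-\infpt s on each \cln\ directly.
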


\begin{proof} 
Let $\ell,m$ be two $\C$-lines in $\si$, and let $\alpha,\beta$ be the unique $\C$-planes with $\C$-lines $\ell,m$ respectively. By Lemma~\ref{cplane-meet-theorem}(2), 
$\ell,m$ meet in a exactly one point (namely $\alpha_\infty=\beta_\infty$), or not at all. So each point of $\si$ is either a 0-point, an $\infty$-point, or lies on exactly one $\C$-line. By Corollary~\ref{count-cline}, the number of points on exactly one $\C$-line is $(q^2+q)q=q^3+q^2$. 
By Lemma~\ref{cplane-meet-theorem}(3), each \infpt\ lies in exactly two
parallel classes, hence there are
${q+1\over 2}$ \infpt s. 
Thus the number of 0-points is $(q^3+q^2+q+1)-({q+1\over 2})-(q^3+q^2)={q+1\over 2}$.
\end{proof}

As a direct consequence of this proof, we have the following corollary which tells us how $\C$-lines are positioned in $\si$.

\begin{corollary} \Label{cline-infty}
If two $\C$-lines meet, then they do so  in an $\infty$-point. Further, each $\infty$-point lies on exactly $2q$ $\C$-lines. 
\end{corollary}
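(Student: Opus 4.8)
The plan is to obtain both assertions as immediate consequences of Lemma~\ref{cplane-meet-theorem} together with the one-to-one correspondence between \cpl s and \cln s recorded in Corollary~\ref{count-cline}. For the first statement, I would take two distinct \cln s $\ell,m$ and write them as $\ell=\pi\cap\si$ and $m=\alpha\cap\si$, where $\pi,\alpha$ are the unique \cpl s guaranteed by Corollary~\ref{count-cline}. Since $\ell\cap m=(\pi\cap\alpha)\cap\si$, any common point of the two \cln s lies in $\pi\cap\alpha$, so I would split into the two cases of Lemma~\ref{cplane-meet-theorem}(2). If $\pi,\alpha$ lie in the same parallel class, then $\pi\cap\alpha$ is the single point $\pi_\infty=\alpha_\infty$, which is an \infpt\ lying in $\si$ by part~1 of that lemma, so $\ell$ and $m$ can meet only there. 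If $\pi,\alpha$ lie in different parallel classes, then $\pi\cap\alpha$ is either a single \cpt\ (an affine point, hence not in $\si$, so $\ell$ and $m$ do not meet at all) or an affine line through the \infpt\ $\pi_\infty=\alpha_\infty$; an affine line meets $\si$ in exactly one point, namely $\pi_\infty$, so again the only possible common point is that \infpt. In every case a point shared by two \cln s is an \infpt.

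For the second statement I would count the \cln s through a fixed \infpt\ $P$. The key observation is that a \cln\ passes through $P$ precisely when its \cpl\ contains $P$: the \cln\ of a \cpl\ $\alpha$ is $\alpha\cap\si$, and since $P\in\si$ we have $P\in\alpha\cap\si$ if and only if $P\in\alpha$. This reduces the count of \cln s through $P$ to the count of \cpl s containing $P$. By Lemma~\ref{cplane-meet-theorem}(3) the \infpt\ $P$ lies on the \cpl s of exactly two parallel classes (and on no others), and by Lemma~\ref{affine-plane} each parallel class consists of $q$ \cpl s; hence there are exactly $2q$ \cpl s through $P$. Finally, Corollary~\ref{count-cline} states that distinct \cpl s have distinct \cln s, so these $2q$ planes yield $2q$ distinct \cln s through $P$, and conversely every \cln\ through $P$ arises from one of them. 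Therefore $P$ lies on exactly $2q$ \cln s.

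Since all of the structural work has already been carried out in Lemmas~\ref{cplane-meet-theorem}--\ref{lem:pttype} and Corollary~\ref{count-cline}, I do not expect any genuine obstacle; this is why the statement is flagged as a direct consequence of the proof of Lemma~\ref{lem:pttype}. The one point needing a little care is the bijective bookkeeping in the second part, namely that the correspondence ``\cln\ through $P$'' $\leftrightarrow$ ``\cpl\ through $P$'' is exactly one-to-one: this is precisely where Corollary~\ref{count-cline} is used, to guarantee that none of the $2q$ planes through $P$ collapse to a common \cln. The first part is essentially a restatement of the dichotomy already noted at the start of the proof of Lemma~\ref{lem:pttype}.
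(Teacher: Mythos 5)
Your proposal is correct and follows essentially the same route as the paper: the paper presents this corollary as a direct consequence of the proof of Lemma~\ref{lem:pttype}, which likewise deduces the intersection pattern of \cln s from Lemma~\ref{cplane-meet-theorem}(2) (together with part~1 placing $\pi_\infty$ in $\si$) and counts the \cln s through an \infpt\ via the two parallel classes of Lemma~\ref{cplane-meet-theorem}(3), each of size $q$ by Lemma~\ref{affine-plane}, with Corollary~\ref{count-cline} ensuring distinct \cpl s give distinct \cln s. Your explicit bookkeeping of the bijection between \cpl s through $P$ and \cln s through $P$ is exactly the intended argument.
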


\subsection{Defining a spread in $\si$}

In this section we show how we can use the points of $\C$ to construct a spread in $\si$. We show that each $\C$-point $A$ defines a unique line $t_A$ in $\si$. These resulting $q^2$ lines are mutually skew. The remaining $q+1$ points in $\si$ are the 0-points and \infpt s, we show they lie on a line $\ti$, and that the lines $t_A$ and $\ti$ form a spread in $\si$.
We begin by defining for each $\C$-point $A$, a set of $q+1$ points $t_A$ in $\si$. 

\begin{defn}\Label{def:tA} 
Let $A$ be \cpt\ and let
$\pi$ be a \cpl\ through $A$. The unique tangent to the  conic $(\pi\cap\C)\cup\pi_\infty$ at the point $A$ meets $\si$ in a point which we denote $A_\pi$. Note that $A_\pi$ is on the \cln\ of $\pi$. There are $q+1$ \cpl s $\pi_i$, $i=1,\ldots,q+1$, through $A$. Define $t_A$ to be the set of $q+1$ points $A_{\pi_i}$, $i=1,\ldots,q+1$.
\end{defn}

 We show in Theorem~\ref{cline-cover} that the set $t_A$ is a line in $\si$. First we show that the $\infty$-points and the $0$-points lie on a line of $\si$, denoted $t_\infty$.

\begin{lemma}\Label{thm-tinfty}
The set of \infpt s and 0-points lie on a line $\ti$.
\end{lemma}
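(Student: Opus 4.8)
The plan is to reduce the whole statement to the single assertion that the $\frac{q+1}{2}$ \infpt s are collinear; once that line, call it $\ti$, is in hand, a parallel argument will force the $\frac{q+1}{2}$ $0$-points onto it as well. Throughout I would use the point classification of Lemma~\ref{lem:pttype} together with Corollaries~\ref{count-cline} and~\ref{cline-infty}: there are exactly $q^2+q$ \cln s, every \infpt\ lies on $2q$ of them, every point lying on one \cln\ lies on no other, and the $0$-points lie on none.

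The engine of the proof is a counting identity for an arbitrary plane $\rho$ of $\si$. Writing $s$ and $z$ for the numbers of \infpt s and $0$-points in $\rho$, and $c$ for the number of \cln s contained in $\rho$, I would double count incident pairs (point of $\rho$, \cln\ through it). Summing over points gives $2qs+o$, where $o=q^2+q+1-s-z$ counts the remaining points of $\rho$ (each on exactly one \cln); summing over \cln s, and using that in $\PG(3,q)$ every \cln\ either lies in $\rho$ or meets it in a single point, gives $(q+1)c+(q^2+q-c)$. Equating yields the relation $qc=(2q-1)s-z+1$.

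The crucial and most delicate step is to extract from this identity, using only $0\le s,z\le\frac{q+1}{2}$, $c\in\mathbb Z_{\ge0}$ and $q\ge7$, that $s\in\{0,1,\frac{q+1}{2}\}$. The idea is that $qc$ is confined to the interval $[(2q-1)s-\frac{q-1}{2},\,(2q-1)s+1]$, whose length $\frac{q+1}{2}$ is strictly less than $q$, so it contains at most one multiple of $q$; a direct check shows that for every $s$ with $2\le s\le\frac{q-1}{2}$ the two straddling multiples $q(2s-1)$ and $2qs$ both fall outside this interval, so no admissible $c$ exists and such planes cannot occur. The surviving cases give $(s,z,c)=(0,1,0)$, $(1,0,2)$, and for $s=\frac{q+1}{2}$ the unique solution $(z,c)=(\frac{q+1}{2},q)$. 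In words: a plane of $\si$ meets the \infpt\ set in $0$, $1$, or all $\frac{q+1}{2}$ points, and whenever it contains all of them it also contains all $\frac{q+1}{2}$ $0$-points. This Diophantine step is the one place where $q\ge7$ is genuinely needed, and it is where I expect the main difficulty to lie.

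The conclusion is then immediate. Choosing two \infpt s (there are $\frac{q+1}{2}\ge2$ of them), let $\ti$ be the line they span. Every plane through $\ti$ contains at least two \infpt s, so by the classification it contains all $\frac{q+1}{2}$ of them; since the intersection of the $q+1$ planes through $\ti$ is $\ti$ itself, every \infpt\ lies on $\ti$. These same planes, having $s=\frac{q+1}{2}$, each contain all the $0$-points, so every $0$-point lies in all $q+1$ planes through $\ti$ and hence on $\ti$. Thus the \infpt s and $0$-points all lie on the line $\ti$, as required.
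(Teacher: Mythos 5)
Your proof is correct, but it takes a genuinely different route from the paper's. The paper's argument is a two-line blocking-set argument: the set $\ell$ of \infpt s and 0-points has exactly $q+1$ points by Lemma~\ref{lem:pttype}, and every plane of $\si$ meets $\ell$ (a plane containing a \cln\ contains an \infpt; a plane containing no \cln\ has $q^2+q+1$ points but the $q^2+q$ \cln s cover at most one point of it each, so it contains a 0-point); it then cites the standard characterisation (\cite[Theorem 3.5]{hirs98}) that a set of $q+1$ points of $\PG(3,q)$ meeting every plane is a line. You instead double count incidences $(P,m)$, $P\in\rho$, $m$ a \cln\ through $P$, inside an arbitrary plane $\rho$ of $\si$, obtaining $qc=(2q-1)s-z+1$, and solve the resulting Diophantine constraints to show a plane contains $0$, $1$, or all $\frac{q+1}{2}$ \infpt s, and in the last case also all $\frac{q+1}{2}$ 0-points; collinearity then follows by intersecting the planes through the join of two \infpt s. Your route is longer but self-contained (no appeal to the blocking-set theorem) and yields extra structural information for free: planes with no \infpt\ contain exactly one 0-point, planes with one \infpt\ contain exactly two \cln s, and planes containing $\ti$ contain exactly $q$ \cln s --- the last fact anticipating Lemma~\ref{cplane-parallel-classes}. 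One side remark of yours is inaccurate: the Diophantine step does not genuinely need $q\geq 7$; the interval argument works for all odd $q\geq 5$ and the range $2\le s\le\frac{q-1}{2}$ is empty for $q=3$, so your proof goes through for all odd $q\geq 3$ (the hypothesis $q\geq 7$ is consumed by the earlier lemmas you quote, not by this step). This does not affect correctness.
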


\begin{proof}
Let $\ell$ denote the set of \infpt s and 0-points, so by Lemma~\ref{lem:pttype}, 
$|\ell|=q+1$.  Let $\alpha$ be any plane of $\si$.  If $\alpha$ contains
a \cln, then it contains an \infpt\ and so meets $\ell$.  If
$\alpha$ does not contain
a \cln, then $\alpha$ contains at most one point of each of the
$q^2+q$ \cln s of $\si$.  As $\alpha$ has $q^2+q+1$ points, it follows that $\alpha$ meets $\ell$ in at least one point.
Hence every plane of $\si$ meets $\ell$, and so by \cite[Theorem 3.5]{hirs98}, $\ell$ is a line.
\end{proof}



The next two lemmas investigate the 3-space spanned by two $\C$-planes.

\begin{lemma}\Label{cor:old4} A $3$-space contains at most two \cpl s. 
\end{lemma}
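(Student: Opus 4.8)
The plan is to argue by contradiction, exploiting the very restrictive way in which two \cpl s can intersect, as catalogued in Lemma~\ref{cplane-meet-theorem}(2). Suppose some $3$-space $\Sigma$ of $\PG(4,q)$ contained three distinct \cpl s $\pi_1,\pi_2,\pi_3$. Since $\Sigma$ is a projective $3$-space, any two distinct planes inside it meet in a line: by the dimension formula, $\dim(\pi_i\cap\pi_j)\ge 2+2-3=1$, and as $\pi_i\neq\pi_j$ the intersection is exactly a line. So the first step is simply to record that each of the three intersections $\pi_i\cap\pi_j$ is a line of $\PG(4,q)$.

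Next I would feed this into Lemma~\ref{cplane-meet-theorem}(2). Two \cpl s in the \emph{same} parallel class meet in a single point, so a line intersection forces each pair $\pi_i,\pi_j$ to lie in \emph{different} parallel classes; moreover in that case the line $\pi_i\cap\pi_j$ passes through the common \infpt\ $(\pi_i)_\infty=(\pi_j)_\infty$. Applying this to the three pairs, transitivity gives $(\pi_1)_\infty=(\pi_2)_\infty=(\pi_3)_\infty=:P$, so all three planes share a single \infpt\ $P$, while pairwise lying in distinct parallel classes, that is, in three distinct parallel classes.

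The contradiction then comes from Lemma~\ref{cplane-meet-theorem}(3): every \infpt\ lies on the \cpl s of \emph{exactly two} parallel classes. Three \cpl s through the common point $P$ belonging to three distinct parallel classes is incompatible with this, which is the desired contradiction; hence no $3$-space can contain three \cpl s, proving the corollary.

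I expect the argument to be short, the only genuine content being the correct bookkeeping of parallel classes. The one point needing care — and the closest thing to an obstacle — is confirming that a line intersection really does force distinct parallel classes together with a shared \infpt; this rests squarely on the dichotomy in Lemma~\ref{cplane-meet-theorem}(2), in which a same-class pair meets in a point only, while a different-class line intersection runs through the common \infpt. Once that dichotomy is invoked, the transitivity $(\pi_1)_\infty=(\pi_2)_\infty=(\pi_3)_\infty$ and the ``exactly two parallel classes per \infpt'' count of part (3) close the argument at once.
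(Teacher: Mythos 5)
Your proof is correct and follows essentially the same route as the paper's: both use Lemma~\ref{cplane-meet-theorem}(2) to conclude that \cpl s meeting in a line lie in distinct parallel classes and share a common \infpt, and then contradict the ``exactly two parallel classes per \infpt'' statement of Lemma~\ref{cplane-meet-theorem}(3). The only difference is that you spell out the dimension-count showing planes in a $3$-space meet in a line, which the paper leaves implicit.
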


\begin{proof}
Let $\alpha$, $\beta$ be two $\C$-planes that span a 3-space, so by Lemma~\ref{cplane-meet-theorem}(2), they are in different parallel classes, and  $\alpha_\infty=\beta_\infty$. 
Thus, for a 3-space to contain three \cpl s, they would
have a common  \infpt, and the planes would be
pairwise in different parallel class. This cannot occur as through any
\infpt\ there are at most two parallel classes by Lemma~\ref{cplane-meet-theorem}(3).
\end{proof}

\begin{lemma}\Label{two-cplanes-3-sp}
Let $\Sigma$ be a $3$-space containing two distinct \cpl s $\alpha$ and $\beta$.
Then the $\C$-points in $\Sigma$ are exactly the $\C$-points in $\alpha$ and $\beta$.
\end{lemma}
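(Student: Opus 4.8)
The plan is to argue by contradiction: assuming that some \cpt\ $P$ of $\Sigma$ lies outside $\alpha\cup\beta$, I will produce three collinear \cpt s inside a single \cpl, which is impossible by (A1). First I would pin down how $\alpha$ and $\beta$ sit inside $\Sigma$. As distinct planes of the $3$-space $\Sigma$, they meet in a line $\ell=\alpha\cap\beta$. By Lemma~\ref{cplane-meet-theorem}(2), two \cpl s in the same parallel class meet in only a single point, so $\alpha$ and $\beta$ must lie in different parallel classes; then Lemma~\ref{cplane-meet-theorem}(2b) shows that $\ell$ passes through the common \infpt\ $\alpha_\infty=\beta_\infty$ and contains exactly one \cpt, say $R$. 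Hence $\alpha$ and $\beta$ together contain precisely $2q-1$ \cpt s (the $q$ in each, overlapping only in $R$), and the lemma amounts to showing $\Sigma$ has no further \cpt s.

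So suppose, for contradiction, that $P$ is a \cpt\ of $\Sigma$ with $P\notin\alpha\cup\beta$. The key local observation is that every \cpl\ $\gamma$ through $P$ meets $\Sigma$ in a line through $P$. Indeed, since $P\notin\alpha\cup\beta$ we have $\gamma\neq\alpha,\beta$, and if $\gamma\subseteq\Sigma$ then $\Sigma$ would contain three distinct \cpl s, contradicting Lemma~\ref{cor:old4}; thus $\gamma\not\subseteq\Sigma$ and $\gamma\cap\Sigma$ is a line. As $P\notin\alpha$, this line meets $\alpha$ in a single point, whence $\gamma\cap\alpha$ is at most a point, and likewise $\gamma\cap\beta$. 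In particular each \cpl\ through $P$ contains at most one \cpt\ of $\alpha$ and at most one \cpt\ of $\beta$.

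Next I would count the $q+1$ \cpl s through $P$. For each of the $q$ \cpt s $a$ of $\alpha$, the unique \cpl\ through $P$ and $a$ given by (A2) meets $\Sigma$ in the line $Pa$; by the previous paragraph these $q$ \cpl s are distinct, and similarly $q$ distinct \cpl s through $P$ meet $\beta$ in a \cpt. Since there are only $q+1$ \cpl s through $P$, inclusion--exclusion gives at least $q+q-(q+1)=q-1$ \cpl s through $P$ that meet both $\alpha$ and $\beta$ in a \cpt. A \cpl\ meeting $\alpha$ and $\beta$ in the \emph{same} \cpt\ would meet them in $\ell\cap\C=\{R\}$, so at most one such \cpl\ can do this; as $q\ge7$ there remains a \cpl\ $\gamma$ meeting $\alpha$ in a \cpt\ $a$ and $\beta$ in a \cpt\ $b$ with $a\neq b$. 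Then the line $\gamma\cap\Sigma$ is the line $ab$, and it passes through $P$, so $a,b,P$ are three distinct collinear \cpt s all lying in $\gamma$, contradicting (A1) that $\gamma\cap\C$ is a $q$-arc. This contradiction shows no such $P$ exists.

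The main obstacle is the counting step. A single \cpl\ through $P$ and a \cpt\ of $\alpha$ need not meet $\beta$ in a \cpt, so the collinear configuration is not produced directly; it is the pigeonhole bound on the $q+1$ \cpl s through $P$ that forces one \cpl\ to meet both $\alpha$ and $\beta$ in \emph{distinct} \cpt s, thereby creating three collinear points of $\C$ inside an arc. Everything else is routine bookkeeping with the intersection patterns already established in Lemma~\ref{cplane-meet-theorem}.
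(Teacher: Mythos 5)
Your proof is correct and takes essentially the same route as the paper's: both arguments consider the $q+1$ $\C$-planes through a hypothetical extra $\C$-point, produce one meeting $\alpha$ and $\beta$ in distinct $\C$-points, and then derive a contradiction from the pair consisting of Lemma~\ref{cor:old4} and the arc condition in (A1). The only (cosmetic) difference is the order in which that dichotomy is resolved --- the paper uses non-collinearity of arc points to force the third $\C$-plane inside $\Sigma$, contradicting Lemma~\ref{cor:old4}, whereas you invoke Lemma~\ref{cor:old4} first so that the plane meets $\Sigma$ in a line, forcing three collinear points of an arc and contradicting (A1) --- together with your use of (A2) plus inclusion--exclusion in place of the paper's parallel-class count.
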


\begin{proof}
If two \cpl s $\alpha$ and $\beta$ generate a 3-space $\Sigma$, then they meet in a line  $\ell$, which by Lemma~\ref{cplane-meet-theorem}(2) contains a unique \cpt\ $A$.  
Further, $\alpha,\beta$ are in different parallel classes.
Suppose $\Sigma$ contains a further \cpt\ $B$ not in $\alpha$ or $\beta$. Of the $q+1$ \cpl s through $B$, one is parallel to $\alpha$, one is parallel to $\beta$ and one contains $A$.  Let $\pi$ be any of the $q-2\geq 1$ remaining \cpl s through $B$. Then $\pi$ meets $\alpha$ in a \cpt \ $C$ and $\beta$ in a \cpt\ $D$.  As $B,C,D$ are $\C$-points, they are not collinear, so $\pi=\langle B,C,D\rangle$. Hence $\pi,\alpha,\beta$ are three \cpl s contained in a 3-space $\Sigma$, contradicting Lemma~\ref{cor:old4}.
\end{proof}

We now show that the set of points $t_A$ defined in \ref{def:tA} form a line of $\si$.

\begin{theorem}\Label{cline-cover}
For each \cpt\ $A$, the set $t_A$  is a line of $\si$.
\end{theorem}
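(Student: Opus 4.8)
The plan is to prove the statement by reformulating it in terms of tangent lines. For each \cpl\ $\pi_i$ through $A$, write $s_i$ for the affine tangent line to the conic $(\pi_i\cap\C)\cup(\pi_i)_\infty$ at $A$, so that by Definition~\ref{def:tA} we have $A_{\pi_i}=s_i\cap\si$. Since all of $s_1,\dots,s_{q+1}$ pass through the affine point $A$, the set $t_A$ is a line of $\si$ precisely when these $q+1$ tangent lines are coplanar: if they lie in a common affine plane $\tau$ then $t_A\subseteq\tau\cap\si$, and once the $A_{\pi_i}$ are known to be $q+1$ distinct points this containment is an equality. So the whole problem reduces to showing that the tangent lines $s_i$ are coplanar, equivalently that the points $A_{\pi_i}$ are collinear.

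First I would record the combinatorial scaffolding. By Lemma~\ref{affine-plane} the \cpl s through $A$ are one from each of the $q+1$ parallel classes, and by Lemma~\ref{cplane-meet-theorem}(3) each \infpt\ determines exactly two parallel classes; since there are $(q+1)/2$ \infpt s (Lemma~\ref{lem:pttype}), the parallel classes pair up, so the $q+1$ \cpl s through $A$ split into $(q+1)/2$ matched pairs $\{\pi,\pi'\}$ with $(\pi)_\infty=(\pi')_\infty$ meeting in the affine line $A(\pi)_\infty$ (Lemma~\ref{cplane-meet-theorem}(2b)), while \cpl s through $A$ in different matched pairs meet only in $A$. I would also fix the position of each $A_{\pi_i}$: the \cln\ $\ell_{\pi_i}=\pi_i\cap\si$ is the tangent to the conic of $\pi_i$ at $(\pi_i)_\infty$, so $A_{\pi_i}$ is the meet of the two tangents at $A$ and at $(\pi_i)_\infty$; in particular $A_{\pi_i}\neq(\pi_i)_\infty$, hence $A_{\pi_i}\notin\ti$ (as $\ti$ is not a \cln, a $0$-point lying on no \cln, so $\ell_{\pi_i}\cap\ti=\{(\pi_i)_\infty\}$ by Lemma~\ref{thm-tinfty}), and by Lemma~\ref{lem:pttype} each $A_{\pi_i}$ lies on the single \cln\ $\ell_{\pi_i}$. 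Since distinct \cpl s have distinct \cln s (Corollary~\ref{count-cline}) and two \cln s meet only in \infpt s (Corollary~\ref{cline-infty}), the $q+1$ points $A_{\pi_i}$ are distinct.

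Next I would analyse a single matched pair $\{\pi,\pi'\}$ inside the $3$-space $\Sigma=\langle\pi,\pi'\rangle$. By Lemma~\ref{cor:old4} these are the only two \cpl s in $\Sigma$, and by Lemma~\ref{two-cplanes-3-sp} the \cpt s of $\Sigma$ are exactly those of $\pi$ and $\pi'$; in particular $s$ meets $\C$ only in $A$ even globally, since $s\subseteq\pi$. In the plane $\Sigma\cap\si$, which contains $\ell_\pi$ and $\ell_{\pi'}$ meeting in $(\pi)_\infty$, the two points $A_\pi,A_{\pi'}$ span the line $\langle s,s'\rangle\cap\si$. This settles collinearity two points at a time, but the coplanarity of all $q+1$ tangent lines does not follow from the matched pairs in isolation, because tangent lines from different matched pairs lie in \cpl s that span all of $\PG(4,q)$ rather than a common $3$-space.

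The main obstacle is therefore this last gluing step, forcing all tangent directions into one plane. I would attack it exactly as in Lemma~\ref{thm-tinfty}: show that $t_A$ is a blocking set for the planes of $\si$ and invoke \cite[Theorem 3.5]{hirs98}, which with $|t_A|=q+1$ forces $t_A$ to be a line. Concretely, given a plane $\rho$ of $\si$ one forms $W=\langle A,\rho\rangle$ and, assuming $\rho\cap t_A=\emptyset$, each line $\ell_{\pi_i}$ meets $\rho$ in a single point $X_i\neq A_{\pi_i}$, so that $r_i=\pi_i\cap W=AX_i$ is a $2$-secant of $\C$ for all but at most two indices; this produces at least $q-1$ further \cpt s $C_i$ inside the $3$-space $W$. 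The contradiction is then extracted by a fifth-point argument in the spirit of Lemma~\ref{one-parallel-class}: from the secant structure of the $q$-arcs $\pi_i\cap\C$ one locates a plane of $W$ meeting $\C$ in at least five points, hence a \cpl\ by (A1), positioned so as to violate (A2) or (A3). I expect the delicate and most labour-intensive point to be the careful choice guaranteeing this plane is genuinely new, which is where the hypothesis $q\ge 7$ is used to ensure each arc has enough points to avoid the finitely many forbidden positions.
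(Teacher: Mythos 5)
Your reduction to collinearity of the points $A_{\pi_i}$, the pairing of the \cpl s through $A$ via shared \infpt s, and the distinctness of the $q+1$ points $A_{\pi_i}$ are all correct, and the blocking-set shell (show every plane of $\si$ meets the set $t_A$, then apply \cite[Theorem 3.5]{hirs98}) would indeed suffice. But the core of your argument --- deriving a contradiction from a plane $\rho$ of $\si$ with $\rho\cap t_A=\emptyset$ --- has two genuine gaps. First, your claim that $r_i=\pi_i\cap W$ is a $2$-secant of $\C$ ``for all but at most two indices'' fails when $\ti\subset\rho$: in that case each $\ell_{\pi_i}$ meets $\rho$ exactly in $(\pi_i)_\infty$, so every $r_i$ is the $1$-secant $A(\pi_i)_\infty$ and you obtain no new \cpt s at all; this case needs a separate argument which you do not supply. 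Second, and fatally, the promised ``fifth-point argument'' has no mechanism behind it. When $\rho\cap\ti$ is a single point $T$, one checks (exactly by your (A2) argument) that $\C\cap W$ consists precisely of $A$ together with your points $C_i$, so $|\C\cap W|$ equals $q$ (if $T$ is an \infpt) or $q+2$ (if $T$ is a $0$-point), and no three of these points are collinear. A set of $q+2$ points of $\PG(3,q)$ with no three collinear need not contain five coplanar points (subsets of a twisted cubic, or points of an elliptic quadric chosen so that no plane section contains five of them, show this), so no counting or pigeonhole argument of the kind you indicate can produce the plane meeting $\C$ in five points; a structural reason is required and none is given. Ironically, the contradiction \emph{after} finding such a plane $\tau$ is easier than you suggest: if $A\in\tau$ then $\tau$ is some $\pi_j$ contained in $W$, impossible since $\pi_j\cap W$ is a line; if $A\notin\tau$ then every point of the $q$-arc $\tau\cap\C$ must be one of the $C_i$, which forces $q\le q-1$ when $T$ is an \infpt, and forces $\tau_\infty=T$ when $T$ is a $0$-point, contradicting that a $0$-point lies on no \cln. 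So the gap is located exactly at producing the five coplanar points.

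The gap traces back to your dismissal of the matched-pair $3$-space: you assert that coplanarity of all the tangent lines cannot be extracted from one pair because \cpl s from different pairs span all of $\PG(4,q)$. That is precisely where the paper's proof succeeds. With $\Sigma=\langle\pi_1,\pi_2\rangle$ the $3$-space of one matched pair, each $\pi_i$ with $i\ge3$ meets $\Sigma$ in a line through $A$; by Lemma~\ref{cor:old4} and Lemma~\ref{two-cplanes-3-sp} that line contains no \cpt\ other than $A$ and misses $(\pi_i)_\infty$, so it is forced to be the tangent $s_i$, whence $A_{\pi_i}=s_i\cap\si$ lies in the plane $\Sigma\cap\si$ for \emph{every} $i$, not just $i=1,2$. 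Repeating with a second matched pair and intersecting the two distinct planes yields the line $t_A$. In other words, the tools you had already assembled (Lemmas~\ref{cor:old4} and~\ref{two-cplanes-3-sp}) do close the argument, but along the route you rejected rather than the blocking-set route you proposed.
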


\begin{proof}
By Lemma~\ref{lem:pttype}, there are 
 $(q+1)/2$ $\infty$-points, label these  $T_1,\ldots,T_{\frac{q+1}{2}}$. Let $A$ be a $\C$-point, each of the $q+1$ $\C$-planes through $A$ contains exactly one $\infty$-point. If three $\C$-planes $\alpha,\beta,\gamma$ through $A$ contained the same $\infty$-point $T_1$ say, then by Lemma~\ref{cplane-meet-theorem}(2), they are all in different parallel classes. However, this contradicts Lemma~\ref{cplane-meet-theorem}(3) as $T_1$ defines exactly two parallel classes. Hence each of these $q+1$ $\infty$-points lies in exactly two $\C$-planes that contain $A$. Hence the $q+1$ $\C$-planes $\pi_1,\ldots,\pi_{q+1}$ through $A$ can be ordered so that 
the pair $(\pi_1,\pi_2)$ both contain $T_1$, the pair
$(\pi_3,\pi_4)$ both contain $T_2$, and so on.

For each $\C$-plane $\pi_i$, $i=1,\ldots,q+1$, through $A$, define the point $A_{\pi_i}$ as in Definition~\ref{def:tA}, and let $t_A=\{A_{\pi_1},\ldots,A_{\pi_{q+1}}\}$. Now $\pi_1,\pi_2$ both contain the line $AT_1$, so 
$\Sigma=\langle \pi_1,\pi_2\rangle$ is a 3-space. Consider the plane $\sigma_\infty=\Sigma\cap\si$. 
By Lemma~\ref{cor:old4}, $\pi_1$ and $\pi_2$ are the only $\C$-planes contained in $\Sigma$, so the 
remaining $q^2+q-2$ $\C$-planes each meet $\Sigma$ in a line. 
We now show that each of these $q^2+q-2$ lines meet the plane  $\sigma_\infty$ in exactly one point.  Suppose not, that is, suppose that some $\C$-plane $\pi$ meets $\sigma_\infty$ in a line $\ell$, so $\ell$ is the $\C$-line of $\pi$. 
Let $m,n$ be the $\C$-lines of $\pi_1,\pi_2$ respectively. So the plane $\sigma_\infty$ contains three $\C$-lines 
 $\ell,m,n$ which pairwise meet.
By Corollary~\ref{cline-infty}, these $\C$-lines contain a common $\infty$-point, namely $T_1$. As $\pi_1,\pi_2$ meet in a line, by Lemma~\ref{cplane-meet-theorem}(2), $\pi_1,\pi_2$ are in different parallel classes, and $\pi$ is in same parallel class as one of $\pi_1,\pi_2$. Suppose $\pi
$ is in the same parallel class as $\pi_1$, then by  Lemma~\ref{cplane-meet-theorem}, $\pi$ meets $\pi_2$ in a line. 
So as $\ell$ is also in $\sigma_\infty$, we conclude that $\pi$ lies in $\Sigma$, 
contradicting Lemma~\ref{cor:old4}. 
Thus each of  the $q^2+q-1$ \cpl s distinct from  $\pi_1,\pi_2$ contain exactly one point of $\sigma_\infty$.

In particular, as the \cpl s $\pi_3,\ldots,\pi_{q+1}$ all contain $A\in\Sigma$, they meet $\Sigma$ in affine lines labelled $\ell_3,\ldots,\ell_{q+1}$ respectively, so each line $\ell_i$ meets $\si$ in a point. As $\pi_3,\ldots,\pi_{q+1}$ all contain the point $A$, they contain no further \cpt s of $\pi_1$ and $\pi_2$.  In particular, $\ell_i$, $i=3,\ldots,q+1$ does not contain any \cpt s that lie in $\pi_1$ or $\pi_2$ (other than $A$). By Lemma~\ref{two-cplanes-3-sp}, the only $\C$-points in $\Sigma$ are those in $\pi_1$ and $\pi_2$. Hence each $\ell_i$ contains exactly one $\C$-point, namely $A$.  
Recall that $\pi_1$ and $\pi_2$ have a common $\infty$-point $T_1$, and that 
$\pi_3,\ldots,\pi_{q+1}$ do not contain $T_1$. Thus in $\pi_i$, $\ell_i$ contains exactly one $\C$-point $A$, but does not meet $\ti$.
Hence the line $\ell_i$ is the required tangent line to $A$ in $\pi_i$, and hence meets $\sigma_\infty$ in the point $A_{\pi_i}$.

From this we conclude that if $\sigma_\infty$ is the plane defined by the \cln s of $\pi_1$ and $\pi_2$, then $\sigma_\infty$ contains all the points $A_{\pi_i}$ for $i=1,\ldots,q+1$.
Next we repeat the argument with the pair $(\pi_3,\pi_4)$ and define $\sigma'_\infty$ to be the  plane defined by the \cln s of $\pi_3$ and $\pi_4$. A similar argument shows that $\sigma'_\infty$ contains all the points $A_{\pi_i}$ for $i=1,\ldots,q+1$.
As $\sigma_\infty$ and $\sigma'_\infty$ meet $\ti$ in distinct points, it follows that they meet in a line (disjoint from $t_\infty$), and this line $t_A$ contains the $q+1$ points  $A_{\pi_i}$ for $i=1,\ldots,q+1$.
\end{proof}

The next two corollaries follow from the definition of $t_A$, and this result that $t_A$ is a line. 
\begin{corollary}\Label{cor:tA}
Any \cpl\ containing the point $A\in\C$ meets the line $t_A$, and
conversely, any \cpl\ that meets $t_A$ contains the point $A$. 
\end{corollary}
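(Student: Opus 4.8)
The plan is to prove the two implications separately, relying throughout on the fact established in Theorem~\ref{cline-cover} that $t_A$ is a line of $\si$, and hence consists of \emph{exactly} the $q+1$ points $A_{\pi_1},\ldots,A_{\pi_{q+1}}$ of Definition~\ref{def:tA} (these must be distinct, since a line of $\si$ has precisely $q+1$ points, so every point of $t_A$ equals some $A_{\pi_i}$).

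The forward direction is immediate from the definitions. If $\pi$ is a \cpl\ containing $A$, then $\pi$ is one of the $q+1$ \cpl s through $A$, say $\pi=\pi_i$. By Definition~\ref{def:tA} the point $A_{\pi}=A_{\pi_i}$ lies on $t_A$, and it also lies on the \cln\ $\pi\cap\si$, hence in $\pi$. Thus $A_\pi$ is a common point of $\pi$ and $t_A$, and so $\pi$ meets $t_A$.

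For the converse, the decisive observation is that $t_A$ is disjoint from $\ti$. This was already noted in the proof of Theorem~\ref{cline-cover}, where $t_A$ arises as the intersection of two planes of $\si$ that meet $\ti$ in distinct points, forcing $t_A\cap\ti=\emptyset$. Since by Lemma~\ref{thm-tinfty} the line $\ti$ carries all \infpt s and all $0$-points, every point of $t_A$ is of the third type in Lemma~\ref{lem:pttype}, namely a point lying on exactly one \cln. Now suppose a \cpl\ $\beta$ meets $t_A$ in a point $P$. As $P\in t_A$, we have $P=A_{\pi_j}$ for some \cpl\ $\pi_j$ through $A$, so $P$ lies on the \cln\ of $\pi_j$; and as $P\in\beta\cap\si$, it also lies on the \cln\ of $\beta$. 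Since $P$ lies on a unique \cln, the \cln s of $\pi_j$ and $\beta$ coincide, and by Corollary~\ref{count-cline} the correspondence between \cpl s and \cln s is a bijection, which forces $\beta=\pi_j$. Hence $A\in\pi_j=\beta$, as required.

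The forward direction is routine, so the substance lies entirely in the converse. The main obstacle to guard against is that the intersection point $P=\beta\cap t_A$ might \emph{a priori} be an \infpt, which by Corollary~\ref{cline-infty} lies on $2q$ distinct \cln s and so would fail to pin down $\beta$. This is exactly what the disjointness $t_A\cap\ti=\emptyset$ excludes, and it is therefore the crux of the argument.
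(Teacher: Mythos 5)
Your proof is correct and takes essentially the paper's route: the paper offers no separate argument, stating only that the corollary ``follows from the definition of $t_A$, and this result that $t_A$ is a line,'' and your write-up is precisely a careful verification of that claim --- the forward direction straight from Definition~\ref{def:tA}, and the converse pinned down by combining Theorem~\ref{cline-cover} with the already-established Lemmas~\ref{thm-tinfty} and~\ref{lem:pttype} and Corollary~\ref{count-cline}. Your observation that $t_A\cap\ti=\emptyset$ (recorded inside the proof of Theorem~\ref{cline-cover}) is what rules out the intersection point being an \infpt\ is exactly the detail the paper leaves implicit.
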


\begin{corollary}\Label{cor:tA2}
 Let $A$ be a $\C$-point, then the affine plane $\langle t_A,A\rangle$ contains exactly one $\C$-point. 
\end{corollary}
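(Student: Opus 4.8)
The plan is to argue by contradiction. Since $A$ is itself a \cpt\ lying in the affine plane $\Pi=\langle t_A,A\rangle$, it suffices to show that $A$ is the \emph{only} \cpt\ of $\Pi$. First I would record the geometry of $\Pi$: because $t_A$ is a line of $\si$ by Theorem~\ref{cline-cover} and $A$ is an affine point, $\Pi$ is an affine plane meeting $\si$ exactly in the line $t_A$. Moreover, by Definition~\ref{def:tA} together with Theorem~\ref{cline-cover}, the $q+1$ points of the line $t_A$ are precisely the points $A_{\pi_1},\ldots,A_{\pi_{q+1}}$, where $\pi_1,\ldots,\pi_{q+1}$ are the $\C$-planes through $A$ and each $A_{\pi_i}$ is the point in which the tangent at $A$ to the conic $(\pi_i\cap\C)\cup(\pi_i)_\infty$ meets $\si$. (These $q+1$ points are distinct, since $t_A$ is a line, so every point of $t_A$ is realised as some $A_{\pi_i}$.)

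Now suppose, for a contradiction, that $\Pi$ contains a second \cpt\ $B\neq A$. Then the affine line $AB$ lies in $\Pi$, so it meets $\si$ in a single point $P\in\Pi\cap\si=t_A$. By the previous paragraph, $P=A_{\pi_j}$ for some $j$. The crux of the argument is to identify the line $AB$ with a tangent line: since $A_{\pi_j}$ is, by Definition~\ref{def:tA}, the intersection with $\si$ of the tangent at $A$ to the conic of $\pi_j$, that tangent is exactly the line $AA_{\pi_j}$; and as $P=A_{\pi_j}$ lies on $AB$, we get $AB=AA_{\pi_j}$. Thus the secant $AB$ coincides with the tangent at $A$ in $\pi_j$.

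To finish, I would exploit that the tangent at $A$ is a line of the plane $\pi_j$, so $B\in AB\subset\pi_j$ and hence $B\in\pi_j\cap\C$ lies on the conic $(\pi_j\cap\C)\cup(\pi_j)_\infty$. But a tangent line meets a conic only at its point of contact $A$, and $B\neq A$, a contradiction. Hence no such $B$ exists, and $\Pi$ contains exactly one \cpt.

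The argument is short because the heavy lifting was done in Theorem~\ref{cline-cover}; the step I expect to need the most care is the identification $AB=AA_{\pi_j}$, which rests on the fact that the map $i\mapsto A_{\pi_i}$ is a bijection between the $q+1$ $\C$-planes through $A$ and the $q+1$ points of $t_A$. An alternative route that sidesteps this surjectivity would take the unique \cpl\ $\pi$ through $A,B$ supplied by (A2), observe that $P$ and $A_\pi$ both lie in $t_A\cap(\pi\cap\si)$, and use that $t_A$ is disjoint from $\ti$ while the \cln\ $\pi\cap\si$ meets $\ti$ in $\pi_\infty$ to conclude $t_A\neq\pi\cap\si$; this forces $P=A_\pi$ and yields the same tangent-equals-secant contradiction.
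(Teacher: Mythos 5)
Your proof is correct and follows exactly the route the paper intends: the paper states this corollary as an immediate consequence of Definition~\ref{def:tA} and Theorem~\ref{cline-cover}, and your argument (every affine line of $\langle t_A,A\rangle$ through $A$ meets $t_A$ in some $A_{\pi_j}$, hence is the tangent at $A$ in $\pi_j$ and so carries no second \cpt) is precisely that consequence spelled out, including the needed observation that the $q+1$ points $A_{\pi_i}$ exhaust the line $t_A$.
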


We now show that we can use these lines to construct a spread in $\si$. 

\begin{theorem}\Label{lines-form-a-spread}
The lines $t_A$, $A\in\C$, together with $\ti$ form a spread $\S$
of $\Sigma_\infty$.
\end{theorem}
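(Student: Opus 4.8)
The plan is to show that the $q^2+1$ lines comprising the $t_A$ (for $A \in \C$) together with $\ti$ form a spread of $\si$, which requires exactly two things: that they are mutually skew (pairwise disjoint), and that they cover all of $\si$. Since $\si$ is a $3$-space $\PG(3,q)$ containing $q^3+q^2+q+1$ points, and a spread of $\PG(3,q)$ consists of exactly $q^2+1$ mutually skew lines, a counting argument will do most of the work: if I can show the lines are pairwise disjoint, then they account for $(q^2+1)(q+1) = q^3+q^2+q+1$ points with no overlaps, which is precisely all of $\si$, giving the partition for free. So the crux is establishing pairwise disjointness.

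First I would verify that $\ti$ is disjoint from each $t_A$. By Theorem~\ref{cline-cover}, $t_A$ is a line, and the proof of that theorem shows $t_A$ lies in a plane $\sigma_\infty$ disjoint from $\ti$ (the two planes $\sigma_\infty, \sigma'_\infty$ meet $\ti$ in distinct points and $t_A$ is their common line disjoint from $t_\infty$); so $t_A \cap \ti = \emptyset$. Equivalently, I can argue directly: $\ti$ consists of the $\infty$-points and $0$-points by Lemma~\ref{thm-tinfty}, whereas each point $A_{\pi_i}$ of $t_A$ is the intersection with $\si$ of a tangent line to the conic $(\pi_i \cap \C)\cup \pi_{i,\infty}$ at $A$, and such a tangent meets the $\C$-line of $\pi_i$ in a point that is not the $\infty$-point $\pi_{i,\infty}$ (the tangent at $A$ passes through $A$, not through the conic-completion point). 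Thus no point of $t_A$ is an $\infty$-point, and a separate check rules out $0$-points.

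Next, and this is where the main work lies, I would show that distinct lines $t_A$ and $t_B$ are skew for $A \neq B$ in $\C$. The natural approach is via Corollary~\ref{cor:tA}: any \cpl\ containing $A$ meets $t_A$, and any \cpl\ meeting $t_A$ contains $A$. Suppose $t_A$ and $t_B$ share a point $X \in \si$. Consider the unique \cpl\ $\pi$ through the pair $A, B$ (which exists by (A2)). Since $\pi$ contains $A$, it meets $t_A$; since it contains $B$, it meets $t_B$. The tension I want to exploit is that a point lying on both $t_A$ and $t_B$ would force a \cpl\ configuration violating the intersection structure of Lemma~\ref{cplane-meet-theorem}, or would make $X$ lie on too many \cln s contradicting Corollary~\ref{cline-infty} or Lemma~\ref{lem:pttype}. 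Concretely, $X \in t_A$ means $X = A_{\pi'}$ for some \cpl\ $\pi'$ through $A$, so $X$ lies on the \cln\ of $\pi'$; similarly $X = B_{\pi''}$ lies on the \cln\ of some \cpl\ $\pi''$ through $B$. If $\pi' \neq \pi''$ these are two \cln s through $X$, forcing $X$ to be an $\infty$-point by Corollary~\ref{cline-infty} — but I have just shown points of $t_A$ are not $\infty$-points, a contradiction. If $\pi' = \pi''$, then this single \cpl\ contains both $A$ and $B$, so $\pi' = \pi$, and $X = A_\pi = B_\pi$ would be the image under the tangent construction of both $A$ and $B$ in the same plane $\pi$; but the two tangents to the $q$-arc $\pi \cap \C$ at the distinct points $A, B$ are distinct lines meeting $\si$ (the \cln\ of $\pi$) in distinct points, so $A_\pi \neq B_\pi$, again a contradiction.

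The step I expect to be the main obstacle is pinning down the case analysis cleanly, particularly ensuring the dichotomy ``$\pi' = \pi''$ versus $\pi' \neq \pi''$'' is exhaustive and that each horn genuinely contradicts an earlier result without circularity. In particular I must be careful that the tangent lines at $A$ and $B$ within a single \cpl\ really meet the \cln\ in distinct points — this is a statement about a conic in $\PG(2,q)$ (the tangents at two distinct points of a conic are distinct and meet the plane's structure distinctly), which is standard but should be cited or noted. Once skewness of the $t_A$ and their disjointness from $\ti$ is secured, the spread conclusion follows immediately by the counting argument above: exactly $q^2+1$ pairwise disjoint lines in $\PG(3,q)$, covering $q^3+q^2+q+1$ points, must partition $\si$ and hence form a spread.
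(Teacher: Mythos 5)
Your proof is correct, but it takes a different route from the paper's. The paper argues point-by-point rather than line-by-line: given $X\in\si\setminus\ti$, Lemmas~\ref{lem:pttype} and~\ref{thm-tinfty} give exactly one \cln\ through $X$, say $\ell=\pi\cap\si$, and since $\ell$ is tangent to the conic $(\pi\cap\C)\cup\pi_\infty$ at $\pi_\infty$, the point $X$ lies on exactly one further tangent of that conic; hence $X$ lies on exactly one of the lines $t_A$. A count of $|\si\setminus\ti|=q^3+q^2$ against the $q^2(q+1)$ points of the lines $t_A$ then forces every $t_A$ to avoid $\ti$, and the spread property follows. You instead prove pairwise disjointness directly (disjointness from $\ti$ via the proof of Theorem~\ref{cline-cover}, and skewness of $t_A,t_B$ via the dichotomy on the planes realizing a putative common point) and recover the covering of $\si$ by counting. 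The ingredients are the same (Lemma~\ref{lem:pttype}, Corollary~\ref{cline-infty}, tangent properties of conics for $q$ odd), and your dichotomy $\pi'=\pi''$ versus $\pi'\neq\pi''$ is exactly the uniqueness half of the paper's ``exactly one $t_A$'' claim; what your version buys is that you never need the existence half (that each point of $\ell\setminus\pi_\infty$ really does lie on a second tangent), since counting supplies the coverage, while the paper's version gets disjointness and coverage essentially in one stroke without your case analysis. Two details you should make explicit: in the case $\pi'\neq\pi''$ you need that distinct \cpl s have distinct \cln s, which is Corollary~\ref{count-cline}; and your direct argument that no point of $t_A$ is an \infpt\ needs Lemma~\ref{thm-tinfty} (all \infpt s lie on $\ti$, so the only \infpt\ on the \cln\ of $\pi_i$ is $\pi_{i,\infty}$), not merely that the tangent at $A$ misses $\pi_{i,\infty}$.
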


\begin{proof}
Let $\pi$ be a $\C$-plane with $\C$-line $\ell=\pi\cap\si$, and consider the conic $\D=(\C\cap\pi)\cup\pi_\infty$ in $\pi$.
The line $\ell$ is a tangent to $\D$, hence every point of $\ell\setminus\pi_\infty$ lies on one further tangent of $\D$. That is, each point of $\ell\setminus\pi_\infty$ lies in exactly one line $t_A$. By Lemma~\ref{lem:pttype}, each point of $\si\bs t_\infty$ lies in exactly one $\C$-line, hence each point of $\si\bs t_\infty$ lies in exactly one of the $t_A$. Further, the number of points in $\si\bs t_\infty$ equals the number of points on the $q^2$ lines $t_A$. Hence the lines $t_A$, $A\in\C$ and $t_\infty$ are mutually disjoint, and form a spread of $\si$.
\end{proof}

\subsection{The spread $\S$ is regular}

In this section we will show that the spread
$\S=\{t_A\st A\in\C\}\cup\{\ti\}$  is regular. 
We begin with two lemmas showing how certain affine planes meet $\C$.

\begin{lemma}\Label{l-meets-Pinfty}
Let $\ell$ be a line of $\sinfty$ that meets $\ti$, but is not a
\cln. Then 
\begin{enumerate}
\item Every affine plane containing $\ell$ meets $\C$ in at most two
points. 
\item If $\ell$ meets a spread line   $t_A$ (with corresponding $\C$-point $A$), then  the plane $\langle A,\ell\rangle$ contains exactly one \cpt, namely $A$.
\end{enumerate}
\end{lemma}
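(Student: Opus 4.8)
The plan is to prove the two parts in order, with part~2 being a refinement of part~1. Throughout, $\ell$ is a line of $\si$ meeting $\ti$ in a point but not equal to any \cln. By Lemma~\ref{lem:pttype}, the point $P=\ell\cap\ti$ is either a $0$-point or an \infpt, since $\ti$ consists exactly of these points. The key structural fact I would exploit is the spread $\S$ constructed in Theorem~\ref{lines-form-a-spread}: an affine plane $\Pi$ containing $\ell$ meets $\si$ precisely in $\ell$, and since $\ell$ is not a spread line (being not a \cln, and $\ti$ being a single spread line that $\ell$ only meets), the affine plane $\Pi$ contains no spread line. By the correspondence noted in the introduction (Baer subplanes secant to $\li$ correspond to affine planes not containing a spread line), $\Pi$ corresponds to a Baer subplane of $\P(\S)$ secant to $\li$. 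However, since we have not yet established that $\S$ is regular at this point in the argument, I expect the cleaner route is to argue combinatorially inside $\PG(4,q)$ rather than passing to $\P(\S)$.

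\textbf{Part 1.}
For part~1, I would argue by contradiction: suppose an affine plane $\Pi \supseteq \ell$ meets $\C$ in at least three points, say $A,B,D$. Any two of these points, say $A$ and $B$, lie in a unique \cpl\ $\pi_{AB}$ by (A2). The line $AB$ is then an affine $2$-secant of $\C$ lying in both $\Pi$ and $\pi_{AB}$. The heart of the matter is to derive a contradiction from having three concurrent such secants forced by $\ell$. The cleanest handle is the line $t_A$: by Corollary~\ref{cor:tA}, every \cpl\ through $A$ meets $t_A$, and by Theorem~\ref{lines-form-a-spread} the lines $t_A, t_B, t_D$ are mutually skew spread lines. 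I would show that if $A,B,D\in\Pi$ with $\Pi\cap\si=\ell$, then the $2$-secants $AB,AD,BD$ determine \cpl s whose \cln s all pass through $\ell\cap\si$-type points, and that the hypothesis $\ell\cap\ti\neq\emptyset$ forces two of these secants, or their associated tangent structure at a shared point, to meet $t_A$ in a way that contradicts the skewness of the spread or property (A3). The main obstacle is bounding the intersections carefully: three points of $\C$ in a common affine plane through $\ell$ give three $2$-secants forming a triangle in $\Pi$, and I must show the line $\ell=\Pi\cap\si$ meeting $\ti$ is incompatible with all three secants being \cpl-secants, likely by counting the \cln s through $P=\ell\cap\ti$ against Corollary~\ref{cline-infty}.

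\textbf{Part 2.}
For part~2, assume $\ell$ meets the spread line $t_A$ in a point, and set $\Pi=\langle A,\ell\rangle$. Then $A\in\C\cap\Pi$, so by part~1, $\Pi$ contains at most one further \cpt. I would rule out a second \cpt\ $B\in\Pi$: the line $AB$ would be a $2$-secant, generating a \cpl\ $\pi$ through $A$, which by Corollary~\ref{cor:tA} meets $t_A$; but $\Pi$ already meets $t_A$ and contains $\ell$ with $\ell\cap\ti\neq\emptyset$, and I would show this double incidence forces $\ell$ to either be a \cln\ (excluded by hypothesis) or forces $B$ to coincide with $A$, via the fact that $\Pi\cap t_A$ together with the tangent structure of the conic $(\pi\cap\C)\cup\pi_\infty$ pins down the configuration. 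The main obstacle in part~2, and indeed in the whole lemma, is handling the interaction between the affine plane $\Pi$ and the spread line $t_A$ lying in $\si$: I must keep careful track of whether $\Pi\cap\si=\ell$ meets $t_A$ in the same point as where the tangent to the conic exits to infinity, so that no spurious second \cpt\ survives.
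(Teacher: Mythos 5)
Your proposal is a plan rather than a proof: in both parts the contradiction is only announced (``I would show this double incidence forces\ldots''), never derived, and the one concrete mechanism you name for part 1 does not work as stated. The paper's proof of part 1 rests on an idea that is absent from your sketch: every \cpl\ meets $\ti$ (its \infpt\ lies on its \cln\ by Lemma~\ref{cplane-meet-theorem}(1), and on $\ti$ by Lemma~\ref{thm-tinfty}). Hence if an affine plane $\Pi\supseteq\ell$ contained three \cpt s $P,Q,R$, each of the three distinct \cpl s $\pi_{PQ},\pi_{PR},\pi_{QR}$ would contain three non-collinear points of the 3-space $\Sigma=\langle\ti,\Pi\rangle$ (two \cpt s in $\Pi$ plus its own \infpt\ on $\ti$), so all three \cpl s would lie in $\Sigma$, contradicting Lemma~\ref{cor:old4}. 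Your alternative --- ``counting the \cln s through $P=\ell\cap\ti$ against Corollary~\ref{cline-infty}'' --- misfires because the \cln s of $\pi_{PQ},\pi_{PR},\pi_{QR}$ pass through the three distinct points where the secants $PQ,PR,QR$ meet $\ell$; none of them need pass through $P$ at all, so that count never gets started. (A planar variant could be rescued: all three \cln s lie in the plane $\langle\ell,\ti\rangle$ and pairwise meet, hence by Corollary~\ref{cline-infty} they are concurrent at a single \infpt, and then three pairwise non-parallel \cpl s through one \infpt\ contradict Lemma~\ref{cplane-meet-theorem}(3) --- but you do not carry out any such argument.)

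Part 2 has the same defect: you list the right ingredients (Corollary~\ref{cor:tA}, Corollary~\ref{cor:tA2}, skewness of the spread) but never exhibit the contradiction, and the dichotomy you promise (``forces $\ell$ to be a \cln\ or forces $B$ to coincide with $A$'') is established nowhere. The actual argument is short: if $R\neq A$ were a second \cpt\ of $\langle A,\ell\rangle$, let $X=AR\cap\ell$. Then $X\notin t_A$ (else $AR\subset\langle A,t_A\rangle$, contradicting Corollary~\ref{cor:tA2}) and $X\neq\ell\cap\ti$ (else $AR\subset\langle A,\ti\rangle$, contradicting Lemma~\ref{lemma-plane-thru-tinfty}). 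The \cln\ $m$ of the \cpl\ $\pi_{AR}$ passes through $X$, meets $t_A$ by Corollary~\ref{cor:tA}, and meets $\ti$; since $\ell\neq m$ and they meet at the point $X$ lying on neither $\ti$ nor $t_A$, the plane $\langle\ell,m\rangle$ contains two distinct points of $\ti$ and two distinct points of $t_A$, hence contains both spread lines entirely --- impossible, as $\ti$ and $t_A$ are skew by Theorem~\ref{lines-form-a-spread}. Without this (or an equivalent) explicit chain of incidences, both parts of your proposal remain unproven outlines.
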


\begin{proof}
Let $\ell$ be a line of $\sinfty$ that meets $\ti$ in a point, but is not a
\cln. Suppose $\pi$ is an affine plane through $\ell$ that contains three
$\C$-points $P,Q,R$. By (A2), 
the points $P,Q$ lie on a unique \cpl, denoted $\pi_{PQ}$. Similarly
we have \cpl s $\pi_{PR},\pi_{QR}$. 
As $\ell$ is not a \cln, $\pi$ is not a $\C$-plane, hence the  three \cpl s $\pi_{PQ},\pi_{PR},\pi_{QR}$ are distinct.
Consider the 3-space $\Sigma=\langle t_\infty,\pi\rangle$. Now $P,Q,\pi_{PQ}\cap t_\infty$ are three non-collinear points in $\pi_{PQ}$ and in $\Sigma$, hence $\pi_{PQ}\subset\Sigma$. Similarly $\pi_{PR},\pi_{QR}\subset\Sigma$.
This
contradicts Lemma~\ref{cor:old4}. Hence every affine
plane through $\ell$ meets $\C$ in at most two points, proving part 1.

The line $\ell$ meets $\ti$ and  $q$ other lines of the spread, let $t_A$ be one such line, and let $A$ be the $\C$-point corresponding to $t_A$. Consider the plane $\langle A,\ell\rangle$, and suppose it
contains a second point $R$ of $\C$. By (A2), there is a unique \cpl\ $\pi_{AR}$
through $A,R$. The line $AR$ meets $\ell$ in a point $X$. By Corollary~\ref{cor:tA2}, $X\notin t_A$, and the line through $A$ and $\ell\cap\ti$ is a 1-secant of $\C$,
so $X$  lies on a
unique spread line $t_B$, $B\neq A$. Hence the $\C$-plane $\pi_{AR}$ meets
$\ell$ in this point $X$, that is,  the \cln\ $m=\pi_{AR}\cap\sinfty$
meets $\ell$ in the point $X$ of $t_B$. Further $\ell\ne m$ as $m$ is a \cln\ but $\ell$ is not, so 
$\ell$ and $m$ meet $\ti$ in distinct points.
Also note that as $A\in\pi_{AR}$, we have by Corollary~\ref{cor:tA} that
$\pi_{AR}$ meets $t_A$, that is, $m$ meets $t_A$.
Hence $\langle\ell,m\rangle$ is a plane that contains the spread lines $\ti,t_A$, a contradiction as $\ti,t_A$ are skew by Theorem~\ref{lines-form-a-spread}. 
Hence we have shown that $\langle A,\ell\rangle$ contains only one \cpt, namely $A$.
\end{proof}

The next two lemmas examine planes that contain $t_\infty$.

\begin{lemma}\Label{lemma-plane-thru-tinfty}
Each affine plane through $\ti$ meets $\C$ in exactly one point. 
\end{lemma}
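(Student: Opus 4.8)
The statement to prove is Lemma~\ref{lemma-plane-thru-tinfty}: each affine plane through $\ti$ meets $\C$ in exactly one point. The plan is to combine a counting argument with the structural results already established. First I would fix an affine plane $\rho$ containing $\ti$ and observe that $\rho\cap\si=\ti$, since $\rho$ is affine and $\ti$ is a line of $\si$. The affine points of $\rho$ number $q^2$, and these are precisely $\rho\setminus\ti$. The goal is to show exactly one of these $q^2$ affine points is a \cpt.

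\textbf{Using the spread to bound from above.}
The key tool is Theorem~\ref{lines-form-a-spread}, which says the lines $t_A$ (for $A\in\C$) together with $\ti$ form a spread $\S$ of $\si$. Since $\ti\in\S$ and the spread lines are mutually skew, the line $\ti$ meets no $t_A$. Now suppose $\rho$ contained two \cpt s $A$ and $B$. By Definition~\ref{def:tA} and Theorem~\ref{cline-cover}, $A$ determines the spread line $t_A$ and $B$ determines $t_B$, with $A\ne B$ giving $t_A\ne t_B$. I would consider the line $AB$ (an affine line of $\rho$) and the plane $\langle AB,\ti\rangle\subseteq\rho$; the aim is to produce a plane inside $\si$ containing two distinct spread lines, or otherwise contradict the skewness of the $t_A$. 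Concretely, one shows that the configuration of $A$, $B$ and $\ti$ forces $t_A$ and $t_B$ to meet or to be coplanar with $\ti$, violating that $\S$ is a spread. This uses Corollary~\ref{cor:tA2}, which says $\langle t_A,A\rangle$ contains exactly one \cpt, to control how the line $AB$ meets $\ti$ and the spread lines; an argument parallel to the second part of Lemma~\ref{l-meets-Pinfty} should apply almost verbatim. Hence $\rho$ contains at most one \cpt.

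\textbf{Existence of the one point, and the obstacle.}
For the lower bound I would count incident pairs. There are $q^2$ \cpt s in total, and through each $\C$-point $A$ the line $AT$, where $T=\ti\cap\,(\text{relevant point})$, or more cleanly, I would fix the pencil of affine planes through $\ti$: these planes partition the $q^2$ affine points of $\PG(4,q)\setminus\si$ lying in each $3$-space containing $\ti$, and more globally the affine planes through $\ti$ partition all affine points. Since each such plane carries at most one \cpt\ and there are exactly $q^2$ \cpt s to distribute, a clean divisibility/counting argument (together with the fact that every \cpt\ lies in a unique affine plane through $\ti$, which follows because $\langle A,\ti\rangle$ is a single plane) forces each plane through $\ti$ to contain \emph{exactly} one \cpt. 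The main obstacle is organising the count so that the ``at most one'' from the spread argument meets the total $q^2$ exactly: I must verify that distinct \cpt s $A,B$ lie in distinct planes through $\ti$, which is immediate since $\langle A,\ti\rangle=\langle B,\ti\rangle$ would put both on one plane through $\ti$ and contradict the ``at most one'' bound just proved. Thus the number of affine planes through $\ti$ that contain a \cpt\ is exactly $q^2$, and since the total number of affine planes through $\ti$ is also $q^2$ (each point of $\si\setminus\ti$ together with $\ti$ spans one, and these are parametrised by the $q^2$ spread lines other than $\ti$, or equivalently by directions), every plane through $\ti$ contains exactly one \cpt.
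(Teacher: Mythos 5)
Your second half — upgrading ``at most one'' to ``exactly one'' by counting — is sound and is precisely the paper's closing step: the $q^2$ affine planes through $\ti$ partition the affine points, and each of the $q^2$ points of $\C$ lies in exactly one of them, namely $\langle A,\ti\rangle$, so the upper bound forces exactly one per plane. (Your parenthetical justification of the count is garbled: a point of $\si\setminus\ti$ together with $\ti$ spans a plane \emph{inside} $\si$, not an affine plane, and the affine planes through $\ti$ are not parametrised by spread lines. But the count is right and elementary: there are $q^2+q+1$ planes of $\PG(4,q)$ through $\ti$, of which $q+1$ lie in $\si$.)

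The genuine gap is in the first half, which is the substance of the lemma. You assert that two $\C$-points $A,B$ in a plane $\rho$ through $\ti$ would force ``$t_A$ and $t_B$ to meet or to be coplanar with $\ti$'', but you never derive this, and it is not derivable: the configuration does not violate the spread. Indeed, by (A2) the unique $\C$-plane $\pi_{AB}$ through $A,B$ has a $\C$-line that is a common transversal of $\ti$, $t_A$ and $t_B$ (it meets $\ti$ because the $\infty$-point of $\pi_{AB}$ lies on $\ti$ by Lemma~\ref{cplane-meet-theorem}(1) and Lemma~\ref{thm-tinfty}, and it meets $t_A$ and $t_B$ by Corollary~\ref{cor:tA}); a line meeting three pairwise skew spread lines is perfectly consistent with $\S$ being a spread, so no contradiction of skewness can be extracted. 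Your fallback, that the proof of Lemma~\ref{l-meets-Pinfty}(2) applies ``almost verbatim'', also fails: that proof starts from a line $\ell\neq\ti$ of $\si$ with the ambient plane equal to $\langle A,\ell\rangle$ (so meeting $\si$ in $\ell$, not containing $\ti$), and its contradiction is that $\langle\ell,m\rangle$, with $m$ the relevant $\C$-line, is a plane containing the two skew lines $\ti$ and $t_A$. When the ambient plane contains $\ti$ itself there is no such second line $\ell$, and the analogous span $\langle\ti,m\rangle$ contains only one point of $t_A$, so the mechanism collapses. What actually kills the configuration is the conic inside $\pi_{AB}$, not the spread: since $AB$ and $\ti$ both lie in $\rho$, they meet in a point $Y$; then $Y\in\pi_{AB}\cap\ti$, and since $\pi_{AB}\cap\ti$ is a single point which also contains the $\infty$-point of $\pi_{AB}$ (by the two lemmas cited above), we get that $Y$ is that $\infty$-point; hence $A$, $B$, $Y$ are three collinear points of the conic $(\pi_{AB}\cap\C)\cup\{Y\}$, a contradiction. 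This is the paper's argument, and it needs none of the spread machinery you invoke.
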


\begin{proof}
Suppose  $\alpha$ is a plane through $\ti$ that meets $\C$ in two points
$A,B$. By (A2), there is a unique $\C$-plane $\pi_{AB}$ containing $A$ and $B$.  Let $\pi_{AB}\cap \ti=Y$, then  $Y$ is the $\infty$-point of $\pi_{AB}$,  and so $Y$ 
 can be added to $\pi_{AB}\cap\C$ to
form a conic. This is a contradiction as the line $AB$ meets this conic
in three points, namely $A,B,Y$. Hence each affine plane through $\ti$ meets
$\C$ in at most one point. As there are $q^2$ affine planes through
$\ti$ and $q^2$ points of $\C$, we have exactly one \cpt\ in
each affine plane about $\ti$. 
\end{proof}

\begin{lemma}\Label{cplane-parallel-classes}
Every plane in $\sinfty$ containing $\ti$ contains $q$
\cln s through a common \infpt, and the corresponding $q$ \cpl s are in one parallel class. 
\end{lemma}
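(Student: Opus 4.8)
The plan is to fix a plane $\sigma$ of $\si$ containing $\ti$ and to understand the \cln s lying in $\sigma$ together with the \cpl s they determine. First I would count the \cln s inside $\sigma$. Since $\sigma$ is a plane of $\si\cong\PG(3,q)$ it has $q^2$ points off $\ti$, and by Lemma~\ref{lem:pttype} each of these (being neither an \infpt\ nor a 0-point, as all of those lie on $\ti$ by Lemma~\ref{thm-tinfty}) lies on a unique \cln\ $m$. Since $m$ also passes through its \infpt\ on $\ti$, it has two points in $\sigma$ and hence $m\subseteq\sigma$. Conversely each \cln\ of $\sigma$ meets $\ti$ in its single \infpt\ and so covers exactly $q$ points off $\ti$, and distinct \cln s of $\sigma$ are disjoint off $\ti$ because two \cln s can only meet in an \infpt\ (Corollary~\ref{cline-infty}). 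Thus the \cln s of $\sigma$ partition $\sigma\setminus\ti$ and there are exactly $q$ of them.

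Next I would show these $q$ \cln s share a common \infpt. Any two of them meet, being lines of the projective plane $\sigma$, and by Corollary~\ref{cline-infty} they meet in an \infpt, which lies on $\ti$. As each \cln\ meets $\ti$ only in its own \infpt, any two of the $q$ \cln s have the same \infpt, so all $q$ pass through a single \infpt\ $T$.

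The main obstacle is the last claim, that the corresponding \cpl s $\pi_1,\dots,\pi_q$ (with \cln s $m_1,\dots,m_q$) lie in one parallel class; I expect this to be the crux because a priori the $q$ \cln s through $T$ could be drawn from both of the two parallel classes that $T$ determines. Note first that each $\pi_i$ contains $T$, as $T\in m_i\subseteq\pi_i$. By Lemma~\ref{cplane-meet-theorem}(2) it is enough to show any two $\pi_i,\pi_j$ meet only in $T$: then, since $T$ is neither a \cpt\ nor a line, part~(b) is impossible and the two planes must be in the same class. So suppose $\pi_i\cap\pi_j$ strictly contains $T$; it is then a line through $T$, which cannot lie in $\si$ (else it would be contained in $m_i\cap m_j=\{T\}$), hence is an affine line $n$. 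Then $\Sigma=\langle\pi_i,\pi_j\rangle$ is a $3$-space whose intersection $\Sigma\cap\si$ is a plane containing $m_i,m_j$, forcing $\Sigma\cap\si=\langle m_i,m_j\rangle=\sigma$ and so $\ti\subseteq\Sigma$. I would then count the \cpt s of $\Sigma$ two ways: by Lemma~\ref{two-cplanes-3-sp} they are exactly those of $\pi_i$ and $\pi_j$, numbering $2q-|n\cap\C|$ with $|n\cap\C|\le2$ (a line meets the $q$-arc $\pi_i\cap\C$ in at most two points); while the $q$ affine planes of $\Sigma$ through $\ti$ partition the affine points of $\Sigma$ and each meets $\C$ in exactly one point by Lemma~\ref{lemma-plane-thru-tinfty}, giving $q$ \cpt s. Equating gives $|n\cap\C|=q\ge7$, a contradiction. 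Hence $\pi_i\cap\pi_j=\{T\}$ for all $i\ne j$, and all the \cpl s lie in one parallel class.
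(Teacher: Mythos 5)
Your proof is correct, and while its first half mirrors the paper's, your handling of what you rightly call the crux (the parallel-class claim) takes a genuinely different route. For the count, the paper distributes the $q^2+q$ \cln s over the $q+1$ planes through $\ti$, at most $q$ per plane by Corollary~\ref{cline-infty}, hence exactly $q$ per plane; your local version --- the \cln s lying in a fixed plane $\sigma$ through $\ti$ partition the $q^2$ points of $\sigma\setminus\ti$ into sets of size $q$ --- is the same idea, equally valid, and both arguments identify the common \infpt\ in the same way. For the parallel-class claim, the paper argues: if two of the \cpl s $\alpha,\beta$ were in different classes, they would span a $3$-space containing $\sigma$, and then any third \cpl\ whose \cln\ lies in $\sigma$ meets $\alpha$ or $\beta$ in a \cpt\ and is thereby forced into that $3$-space, contradicting Lemma~\ref{cor:old4}. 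You instead rule out the possibility that two of the \cpl s meet in a line $n$: the $3$-space $\Sigma$ they would then span contains $\ti$, so its \cpt s can be counted two ways --- exactly $q$, by the partition of the affine points of $\Sigma$ into the $q$ affine planes through $\ti$ together with Lemma~\ref{lemma-plane-thru-tinfty}; but also $2q-|n\cap\C|\geq 2q-2$, by Lemma~\ref{two-cplanes-3-sp} --- and $q\geq 7$ gives the contradiction, after which Lemma~\ref{cplane-meet-theorem}(2) converts ``pairwise intersection is a single infinite point'' into ``same parallel class''. So where the paper's key tool is Lemma~\ref{cor:old4} applied to an auxiliary third plane, yours is a double count resting on Lemma~\ref{two-cplanes-3-sp} and Lemma~\ref{lemma-plane-thru-tinfty} (a lemma the paper's proof never invokes), with no auxiliary construction needed. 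Both are sound: the paper's argument is slightly leaner in its dependencies, while yours isolates a transparent geometric fact --- a $3$-space containing $\ti$ holds exactly $q$ points of $\C$ --- that does the work in one stroke.
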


\begin{proof}
Consider the $q+1$ planes of $\si$ through $\ti$. 
By Corollary~\ref{cline-infty}, 
each plane about $\ti$ contains at most $q$ \cln s through a common \infpt. There are $q^2+q$ \cln s, hence each of the $q+1$ planes about $\ti$ contains exactly $q$ \cln s through a common $\infty$-point.

Let $\ell,m$ be two $\C$-lines in the same plane of $\si$ about $\ti$. Let $\alpha,\beta$ be the two \cpl s with \cln s $\ell,m$ respectively. Suppose $\alpha,\beta$ are in different parallel classes, then by Lemma~\ref{cplane-meet-theorem}(2) they meet in a line containing a \cpt\ $A$, and span a 3-space.  Consider any other \cln\ $n$ in the plane $\langle \ti,\ell,m\rangle$.  Now if $\delta$ is the \cpl\ with \cln\ $n$, then $\delta$ cannot be parallel to both $\alpha$ and $\beta$ as $\alpha$ and $\beta$ are not in the same parallel class.  So without loss of generality say $\delta$ meets $\alpha$ in the $\C$-point $B$. 
Consider the 3-space $\langle \alpha,\beta\rangle$. It contains $\ell$ and $m$, and so contains $n$. Further, it contains $B$, so $\delta\subset\langle \alpha,\beta\rangle$. This contradicts Lemma~\ref{cor:old4}. Hence  $\alpha$ and $\beta$ are not in different parallel classes, that is, 
the \cln s in a plane of $\si$ through $\ti$ pass through a common
\infpt, and are contained in \cpl s that belong to a common
 parallel class. 
\end{proof}

We now consider a line of $\si$ meeting $t_\infty$ and $q$ other spread lines, and show that the corresponding $\C$-points lie in a common $\C$-plane.
 
 \begin{lemma}\Label{proj-result}
  If $\ell$ is a line of $\si$ meeting spread lines $\ti,
  t_{A_1},\ldots t_{A_q}$, then the corresponding $\C$-points $A_1,\ldots,A_q$ lie on a common \cpl.
\end{lemma}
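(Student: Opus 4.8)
The plan is to reduce the statement to finding a single \cpl\ that meets all of the spread lines $t_{A_1},\dots,t_{A_q}$: by Corollary~\ref{cor:tA} such a plane contains every $A_i$, and since a \cpl\ carries exactly $q$ \cpt s (A1), its \cpt s are then precisely $A_1,\dots,A_q$. Since $\ell$ is a transversal to the $q+1$ spread lines $\ti,t_{A_1},\dots,t_{A_q}$ it meets each of them in one of its $q+1$ points, so the $t_{A_i}$, and hence the $A_i$, are distinct. If $\ell$ happens to be a \cln, say $\ell=\pi\cap\si$, then the argument in the proof of Theorem~\ref{lines-form-a-spread} applies directly: $\ell$ is tangent to the conic $(\pi\cap\C)\cup\pi_\infty$ at $\pi_\infty=\ell\cap\ti$, and each of the remaining $q$ points of $\ell$ lies on the second tangent from that point, i.e.\ on $t_A$ for a unique $A\in\pi\cap\C$. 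Hence $\{A_1,\dots,A_q\}=\pi\cap\C$ and we are done.

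The substantive case is when $\ell$ is not a \cln. Here I would set $\rho=\langle\ell,\ti\rangle$ and apply Lemma~\ref{cplane-parallel-classes}: the plane $\rho$ carries $q$ \cln s through a common \infpt\ $T$, whose \cpl s $\alpha_1,\dots,\alpha_q$ form a single parallel class $\mathcal P_0$, with $\alpha_i\cap\si=m_i\ni T$. Writing $P=\ell\cap\ti$, one has $P\neq T$: otherwise $\ell$ would be one of the $q+1$ lines of $\rho$ through $T$, and since $\ell\neq\ti$ it would be one of the \cln s $m_i$. Thus $\ell$ avoids $T$ and meets each $m_i$ in a distinct point $X_i\neq T$; as in Theorem~\ref{lines-form-a-spread}, $X_i$ lies on the second tangent to $\D_i=(\alpha_i\cap\C)\cup T$ touching at some $A_i\in\alpha_i\cap\C$, so $X_i\in t_{A_i}$ and the $A_i$ are exactly one \cpt\ from each plane of $\mathcal P_0$. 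Hence $\{A_1,\dots,A_q\}$ is a transversal of the parallel class $\mathcal P_0$, and the assertion to be proved is that this transversal is itself a \cpl. I would also record the planar conic data in each $\alpha_i$: $X_i$ is external to $\D_i$, the two tangents from $X_i$ are $m_i$ and $g_i=A_iX_i$, and the chord of contact (the polar of $X_i$) is the affine line $\overline{TA_i}$, which has infinite point $T$.

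The main strategy is projective, via reguli in $\si\cong\PG(3,q)$. The three distinct spread lines $\ti,t_{A_1},t_{A_2}$ are pairwise skew, so they lie in a unique regulus $\R$, whose opposite regulus $\R^{\rm op}$ is precisely the set of their $q+1$ common transversals. Both $\ell$ and the \cln\ $n$ of the \cpl\ $\pi_{A_1A_2}$ (the unique \cpl\ through $A_1,A_2$; its $n$ meets $t_{A_1},t_{A_2}$ by Corollary~\ref{cor:tA} and meets $\ti$ at its \infpt) are common transversals, hence lie in $\R^{\rm op}$. Now if one can show that \emph{all} $q+1$ spread lines $\ti,t_{A_1},\dots,t_{A_q}$ form a regulus --- equivalently, that they possess three common transversals, which via the opposite-regulus correspondence forces the set to equal $\R$ (a regulus being determined by any three of its lines) --- then each $t_{A_i}\in\R$ is met by every line of $\R^{\rm op}$, in particular by $n$. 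By Corollary~\ref{cor:tA} this yields $A_i\in\pi_{A_1A_2}$ for every $i$, so $\pi_{A_1A_2}$ is the required common \cpl.

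The hard part will be exactly this regulus step: producing the missing common transversals (only $\ell$ is given for free), or otherwise proving that the lines $t_{A_i}$ cohere into a regulus, using only (A1)--(A3) together with the conic structure in the planes $\alpha_i$, and crucially \emph{without} invoking regularity of $\S$ --- which is only established afterwards in Corollary~\ref{cor:regular-spread}, so that using it here would be circular. The coherence must be extracted from the single fact that the external points $X_i$ are collinear on $\ell$; dually, one wants the chords of contact $\overline{TA_i}$ to be coplanar. I expect to obtain this by a cross-plane comparison, transporting the pole--polar relation of the conics $\D_i$ between two of the planes of $\mathcal P_0$ along their common point $T$, supported by counting \cpt s in the $3$-spaces $\langle\ti,\ell,A_i\rangle$ (each of which, by Lemma~\ref{lemma-plane-thru-tinfty}, meets $\C$ in exactly $q$ points, namely those of $\alpha_i$). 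Turning this comparison into a rigorous argument that collinearity of the $X_i$ forces $\{A_1,\dots,A_q\}$ to be a \cpl\ is the crux on which the whole lemma turns.
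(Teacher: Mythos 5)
Your setup is correct as far as it goes: the dispatch of the case where $\ell$ is a \cln, the identification via $\rho=\langle\ell,\ti\rangle$ and Lemma~\ref{cplane-parallel-classes} of $A_1,\ldots,A_q$ as one \cpt\ from each plane of a single parallel class, and the observation that a regulus structure on $\{\ti,t_{A_1},\ldots,t_{A_q}\}$ would finish the proof via Corollary~\ref{cor:tA}. But the argument stops exactly where the content of the lemma lies, as you yourself concede. You have only one common transversal of the $q+1$ spread lines, namely $\ell$; your candidate second transversal, the \cln\ $n$ of $\pi_{A_1A_2}$, is known to meet only $\ti,t_{A_1},t_{A_2}$, and proving that $n$ meets the remaining $t_{A_i}$ is literally the statement $A_i\in\pi_{A_1A_2}$, i.e.\ the lemma itself. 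Moreover, the regulus claim you hope to establish first is the paper's Lemma~\ref{partial-spread} (specialised to these lines), which the paper deduces \emph{from} Lemma~\ref{proj-result} by applying it to every transversal of a regulus; so your route would prove the lemma by way of an at least as strong statement, and no mechanism for doing so is offered. The projected ``cross-plane comparison'' of pole--polar relations among the conics $\D_i$ is a restatement of the difficulty, not a proof.

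The idea you are missing is a projection argument combined with the classification of quadrics in $\PG(3,q)$, which supplies a \emph{global} polarity in place of the plane-by-plane polarities you try to glue together. The paper takes $V=\ell\cap\ti$, chooses a 3-space $\Sigma=\langle A,\pi\rangle$ (with $\pi$ a \cpl\ avoiding $V$ and $A$ a \cpt\ off $\pi$) and checks $V\notin\Sigma$, then projects $\C$ from $V$ into $\Sigma$; the projection is injective by Lemma~\ref{lemma-plane-thru-tinfty}. If $V$ is a 0-point, the image $\C'$ together with $\pi_\infty$ is a set of $q^2+1$ points with no three collinear, hence an elliptic quadric for $q$ odd (see \cite{barl55,pane55}); if $V$ is an \infpt, then $\C'$ together with two lines through $\pi_\infty$ is exhibited as a hyperbolic quadric, the two parallel classes of \cpl s through $V$ projecting to its two reguli. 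In either case one checks that \cpl s correspond bijectively to the planes through $\pi_\infty$ meeting the quadric in a conic. The planes $\langle\ell,A_i\rangle$, each containing exactly one \cpt\ by Lemma~\ref{l-meets-Pinfty}, project to $q$ tangent lines through $L=\ell\cap\Sigma$, and $L\pi_\infty$ is a further tangent; the quadric's polarity then places all the contact points $\pi_\infty,A_1',\ldots,A_q'$ in the polar plane of $L$, which is the image of a \cpl\ containing every $A_i$. It is exactly this classification-plus-polarity step that converts the collinearity of your points $X_i$ into coplanarity of the contact points; without a substitute for it, your proposal does not prove the lemma.
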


\begin{proof}
Let $\ell$ be a line meeting spread lines $\ti,
  t_{A_1},\ldots t_{A_q}$. 
  Let $V=\ell\cap t_\infty$. Let $\pi$ be a $\C$-plane not through $V$, let $A$ be a $\C$-point not in $\pi$ and let $\Sigma=\langle A,\pi\rangle$. 
  We first show that $V\notin\Sigma$. 
By Lemma~\ref{lemma-plane-thru-tinfty}, the plane $\langle
\ti,A\rangle$ contains exactly \cpt, namely $A$. As each
affine line of $\pi$ through $\pi_\infty=\pi\cap t_\infty$ meets $\C$ in a point, $\langle
\ti,A\rangle$ meets the plane $\pi$ in exactly the point $\pi_\infty$. Hence
$\langle
\ti,A\rangle$ is not a plane of $\Sigma$, and so $\ti$ is not
contained in $\Sigma$, thus $V\notin\Sigma$. 
Hence we can project the points of $\C$ from $V$ onto $\Sigma$ to obtain a set $\C'$. 
By Lemma~\ref{lemma-plane-thru-tinfty}, 
no line through $V$ can contain 2 points of $\C$, hence $\C'$
contains $q^2$ distinct points.  
We consider this   projection in the two cases: when $V$ is a $0$-point, and when $V$ is an $\infty
$-point. 

Case 1: Suppose $V$ is a $0$-point, we will show that the set $\E=\C'\cup\{\pi_\infty\}$ is an elliptic quadric in
  $\Sigma$.
 Further, each \cpl\ projects to a unique affine plane of $\Sigma$ through $\pi_\infty$, and conversely, each affine plane of $\Sigma$ through $\pi_\infty$ is the image of a unique \cpl.
  
We first show that $\C'\cup\{ \pi_\infty\}$ is a cap of $\Sigma$. By
Lemma~\ref{l-meets-Pinfty}, a plane
meeting $\ti$ in the 0-point $V$ meets $\C$ in at most 2 points. Hence
 no three points of $\C'$ are collinear (otherwise their preimages would
be coplanar with $V$).  
Further, if the point $\pi_\infty$ were collinear with two points $B',C'$ of $\C'$, 
then their preimages $B,C\in\C$ would lie in a plane about $\ti$, which
is not possible by Lemma~\ref{lemma-plane-thru-tinfty}.
So $\E=\C'\cup\{\pi_\infty\}$ is a set of $q^2+1$ points, no three
collinear in a 3-space of order $q$, $q$ odd. Hence $\E$ is an elliptic
quadric, see~\cite{barl55}
or~\cite{pane55}.  
Now let $\alpha$ be a \cpl. As $V$ is a 0-point, $\alpha$ meets
$\ti$ in a point distinct from $V$, and so $\alpha$ projects to an affine plane $\alpha'$ through $\pi_\infty$.  
Let $\alpha,\beta$ be two distinct $\C$-planes with images $\alpha',\beta'$ respectively. If $\alpha'=\beta'$, then as $\alpha,\beta$ together contain at least $2q-1$ $\C$-points, the plane $\alpha'$ contains $2q-1$ points of the elliptic quadric $\E$, a contradiction. 
As there are $q^2+q$ \cpl s and $q^2+q$ affine planes in $\Sigma$
through $\pi_\infty$ (that is, planes not contained in $\si$),  each \cpl\ projects to a unique affine plane of $\Sigma$ through $\pi_\infty$, and conversely. This completes the proof of the statement at Case 1.

 Recall that $\ell$ meets the spread lines $t_{A_1},\ldots,t_{A_q}$,
  we now consider the corresponding $\C$-points $A_i$, and their images $A_i'$ under the projection from $V$.
  By Lemma~\ref{l-meets-Pinfty}, the plane $\pi_i=\langle\ell,A_i\rangle$
contains exactly one \cpt, namely $A_i$. 
Let $L=\ell\cap\Sigma$, so $L\in\si$ as $\ell\subseteq\sinfty$.
The plane $\pi_i=\langle\ell,A_i\rangle$ projects to the line $LA_i'$ which
contains exactly one point of $\C'$, namely $A_i'$, hence these lines are
distinct for distinct $i$. 
As $\sinfty$ contains no \cpt s, the projection $\Sigma\cap\sinfty$ of $\sinfty$ from $V$ onto $\Sigma$ is a tangent plane to $\E$, so $L\pi_\infty$ is a tangent line  to $\E$.  
Thus from a point $L\in\Sigma$ we have a set $\{L\pi_\infty,LA_1',\ldots,LA_q'\}$ of $q+1$ tangent lines of $\E$.  As $\E$ is an
elliptic quadric, the points $\pi_\infty,A_1',\ldots,A_q'$ all lie on a plane
$\beta'$, namely the polar plane of $L$, see \cite[Theorem
15.3.10]{hirs85}. 
Hence by the above argument, $\beta'$ is the image of a \cpl\ $\beta$, and hence the points
$A_i$ lie on a common \cpl, namely $\beta$. So the lemma holds in the case $V=\ell\cap t_\infty$ is a $0$-point. 

Case 2: Suppose $V$ is an \infpt. We will show that the set $\C'$ can be completed to a hyperbolic quadric $\H$ with the
  addition of two lines through $\pi_\infty$ in $\Sigma\cap\Sigma_\infty$.
Further, the \cpl s are of two types: the \cpl s through $V$
  project to the generator lines of $\H$ not through $\pi_\infty$; and the
  \cpl s not through $V$ project to planes through $\pi_\infty$ which meet $\H$ in a conic.
  
  A \cpl\ through $V$ is projected onto a line of $\Sigma$ not through $\pi_\infty$.  By Lemma~\ref{cplane-meet-theorem}(2),  there are two parallel classes ${\mathscr P}_1,{\mathscr P}_2$ of \cpl s through $V$, and \cpl s in the same parallel class pairwise meet in exactly  $V$. Hence one parallel class ${\mathscr P}_1$ through $V$ is mapped to a set   ${\mathcal T}_1$ of $q$ mutually skew lines (each line containing $q$ points of $\C'$), and the other parallel class ${\mathscr P}_2$ through $V$ is mapped to a set ${\mathcal T}_2$ of $q$ mutually skew lines (each line containing $q$ points of $\C'$), with each line from ${\mathcal T}_1$ meeting every  line from ${\mathcal T}_2$ in a point of $\C'$.  We complete these line sets into a regulus and its opposite regulus as follows.  Consider the parallel class ${\mathscr P}_1$ of  \cpl s through $V$.
By Lemma~\ref{cplane-parallel-classes},
 the \cln s of these \cpl s 
lie in a common plane of $\si$ through $\ti$. 
Hence they all
  meet $\Sigma\cap\Sigma_\infty$ in collinear points on a line $\ell_1$
  through $\pi_\infty$.  
The line $\ell_1$ meets every \cpl\ in ${\mathscr P}_1$, and so
meets every line in ${\mathcal T}_1$.
Similarly we have a line $\ell_2$ through $\pi_\infty$ corresponding to the
   parallel class ${\mathscr P}_2$, and $\ell_2$ meets every line in ${\mathcal T}_2$.  Thus the lines
  ${\mathcal T}_1\cup\ell_2$ form a regulus with opposite regulus
  ${\mathcal T}_2\cup\ell_1$.  Hence $\C'\cup\ell_1\cup\ell_2$ is a hyperbolic
  quadric in $\Sigma$.
So we have shown that the $2q$ \cpl s through $V$ project to $2q$ lines of
the hyperbolic quadric $\H$.  The remaining $q^2-q$ \cpl s not through
$V$ project to planes through $\pi_\infty$ that contain $q$ points of $\C'$, hence meet $\H$ in a conic.  
Note that the remaining $2q+1$ planes through $\pi_\infty$ meet $\H$ in two lines of
$\H$, with one of the lines necessarily a line through $\pi_\infty$. This proves the statement for Case 2.

Recall that $\ell$ meets the spread lines $t_{A_1},\ldots,t_{A_q}$,
  we now consider the corresponding $\C$-points $A_i$, and their images $A_i'$ under the projection from $V$. If $\ell$ is a \cln, then
the  $\C$-points $A_1,\ldots,A_q$ lie on a common \cpl\ by Corollary~\ref{cor:tA}.
So suppose $\ell$ is not a \cln. 
By Lemma~\ref{l-meets-Pinfty}, the plane $\pi_i=\langle\ell,A_i\rangle$
contains exactly one \cpt, namely $A_i$. So $\pi_i$ is mapped to a
line through the
points $A_i'$ and $L=\ell\cap(\Sigma\cap\Sigma_\infty)$.  Note that $LA_i'$
meets $\C'$ in exactly one point, namely $A_i'$. The lines
$LA_1',\ldots,LA_q'$ are distinct tangent lines to $\H$.  
Further, $L\pi_\infty$ is a tangent line to $\H$.
Thus from a point $L\not\in\H$ we have a set of $q+1$ tangent lines of $\H$.
Hence the points $\pi_\infty,A_1',\ldots,A_q'$ all lie on a
plane through $\pi_\infty$ which meets $\H$ in a conic, namely the polar plane of
$L$, see~\cite[Theorem 15.3.16]{hirs85}.
Hence by the above argument, this
plane is the image of a \cpl, and hence the points $A_i$ lie
on a common \cpl. That is, the lemma also holds in the case when $V$ is an $\infty$-point.
\end{proof}

We now show that any regulus containing $\ti$ and two
other lines of the spread $\S$ is contained in $\S$. Next we will use the Klein quadric to show that a spread with this property is regular.

\begin{lemma}\Label{partial-spread}
Let $t_A,t_B$ be two elements of the spread $\S$. Then the unique regulus
determined by the three lines $\ti,t_A,t_B$ is contained in $\S$. 
\end{lemma}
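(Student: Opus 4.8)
The plan is to combine the elementary theory of reguli in $\PG(3,q)$ with Lemma~\ref{proj-result}. Since $\ti,t_A,t_B$ are three mutually skew spread lines, they determine a unique regulus $\mathcal R$, whose opposite regulus $\mathcal R'$ consists of the $q+1$ common transversals of $\ti,t_A,t_B$; every line of $\mathcal R$ meets every line of $\mathcal R'$ and conversely. My aim is to show that each of the $q+1$ lines of $\mathcal R$ is a spread line, and I will do this by pinning down exactly which spread lines are met by the transversals in $\mathcal R'$.

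First I would take an arbitrary transversal $\ell\in\mathcal R'$ and count the spread lines it meets. Since $\ell$ meets $\ti$ and $\ti\in\S$, the line $\ell$ cannot itself be a spread line, because distinct spread lines of $\S$ are skew. Hence, as $\S$ partitions the points of $\si$ (Theorem~\ref{lines-form-a-spread}) and no spread line can contain two points of the non-spread line $\ell$, the $q+1$ points of $\ell$ lie on $q+1$ distinct spread lines: the line $\ti$ together with $q$ further lines $t_{A_1},\dots,t_{A_q}$. By Lemma~\ref{proj-result}, the corresponding $\C$-points $A_1,\dots,A_q$ lie on a common \cpl\ $\pi$; since a \cpl\ meets $\C$ in exactly $q$ points, $\{A_1,\dots,A_q\}=\pi\cap\C$.

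The key observation is that this plane $\pi$ does not depend on the choice of $\ell$. Indeed, because $t_A,t_B\in\mathcal R$ and $\ell\in\mathcal R'$, the transversal $\ell$ meets both $t_A$ and $t_B$, so $A$ and $B$ both lie in $\pi$; by (A2) there is a unique \cpl\ through $A$ and $B$, so $\pi$ is this one fixed plane no matter which transversal we started with. Consequently every transversal in $\mathcal R'$ meets precisely the same $q+1$ spread lines $\ti,t_{A_1},\dots,t_{A_q}$, where $\{A_1,\dots,A_q\}=\pi\cap\C$.

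To finish, note that each of the lines $\ti,t_{A_1},\dots,t_{A_q}$ is therefore met by all $q+1$ lines of $\mathcal R'$. A line meeting all $q+1$ lines of a regulus is a common transversal, hence belongs to its opposite regulus, so each of these $q+1$ spread lines lies in $\mathcal R$; as $|\mathcal R|=q+1$, they constitute all of $\mathcal R$, whence $\mathcal R\subseteq\S$. I expect the main obstacle to be the middle step: verifying that a transversal is not itself a spread line and so meets exactly $q$ spread lines besides $\ti$ (so that Lemma~\ref{proj-result} applies), and then recognising via (A2) that all transversals yield the same plane $\pi$. Once this is established, the regulus bookkeeping is routine.
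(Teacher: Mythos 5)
Your proof is correct and follows essentially the same route as the paper: both work with the regulus of transversals of $\ti,t_A,t_B$, apply Lemma~\ref{proj-result} to each transversal, and use the uniqueness in (A2) to conclude that every transversal meets the same $q+1$ spread lines, which therefore constitute the opposite regulus and so lie in $\S$. The only cosmetic difference is that the paper anchors the argument on the $\C$-line of the unique $\C$-plane through $A,B$ (shown to be a transversal via Corollary~\ref{cor:tA}, Lemma~\ref{cplane-meet-theorem} and Lemma~\ref{thm-tinfty}), whereas you treat all transversals uniformly via Lemma~\ref{proj-result}.
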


\begin{proof}Let $t_A,t_B$ be two elements of the spread $\S$.
Through each point $V_i\in \ti$, $i=0,\ldots,q$, there is a unique line $\ell_i$ that
meets both $t_A$ and $t_B$. Further, the lines $\ell_i$ form a regulus
$\R'$. The opposite regulus $\R$ is the unique regulus containing
$\ti,t_A,t_B$. We want to show that $\R\subset \S$. 

Consider the $\C$-points $A,B$ corresponding to $t_A,t_B$ respectively. By (A2), they lie in a
unique \cpl\ $\pi$.  Now $\pi$ meets $\si$ in a \cln\ 
$\ell$, and $\ell$ meets $\ti$ by Lemmas~\ref{cplane-meet-theorem}(1) and~\ref{thm-tinfty}. Further $\ell$ meets
$t_A$ and $t_B$ by Corollary~\ref{cor:tA}. Hence $\ell$ is one of the lines
$\ell_i\in\R'$. Now $\ell$ meets $q+1$ lines of the spread $\S$, denote them
$\ti,t_A,t_B,t_{C_3},\ldots,t_{C_q}$. We want to show that these are the lines of $\R$. Note that the corresponding $\C$-points $A,B,C_3,\ldots,C_q$ lie on
the \cpl\ $\pi$ by Corollary~\ref{cor:tA}.

Now consider the line $\ell_j\in\R'$,
$\ell_j\neq\ell$, it meets $q+1$
lines of the spread $\S$, denote these by $\ti,t_A,t_B,t_{D_3},\ldots,t_{D_q}$. By
Lemma~\ref{proj-result}, the corresponding $\C$-points
$A,B,D_3,\ldots,D_q$ lie on a common \cpl, $\alpha$ say. As there is a unique
\cpl\ containing $A,B$, we have $\alpha=\pi$, and so
$\{C_3,\ldots,C_q\}=\{D_3,\ldots,D_q\}$. Hence each line $\ell_i$ in the regulus $\R'$ meets the
spread lines $\ti,t_A,t_B,t_{C_3},\ldots,t_{C_q}$, and so $\R=\{\ti,t_A,t_B,t_{C_3},\ldots,t_{C_q}\}$, and so $\R\subset\S$. That is, the unique regulus containing $\ti,t_A,t_B$ is
contained in $\S$.
\end{proof}


To prove that $\S$ is a regular spread
we now show that a spread satisfying the conditions of
Lemma~\ref{partial-spread} is regular. We will use the Klein correspondence
from the set of lines in $\PG(3,q)$ to the set of points of the Klein
quadric $\H_5$, a hyperbolic quadric in $\PG(5,q)$. 
For details of this correspondence, see \cite[Section 15.4]{hirs85}. We
note that a
regular spread of $\PG(3,q)$ corresponds under the Klein correspondence to a 3-dimensional elliptic
quadric contained in $\H_5$, that is, the intersection of $\H_5$ and a
3-space that forms an elliptic quadric. Further, a regulus of $\PG(3,q)$ corresponds to
a conic in $\H_5$. 

\begin{theorem}\Label{Sisregular}
Let $\S$ be a spread of $\PG(3,q)$ with a special line
$\ti$ with the property that the regulus
containing $\ti$ and any further two lines of $\S$ is contained
in $\S$.  Then $\S$ is a regular spread.
\end{theorem}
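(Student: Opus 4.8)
The plan is to pass to the Klein correspondence, translate the hypothesis into a closure condition on a point set of the Klein quadric $\H_5\subseteq\PG(5,q)$, and then project from the image of $\ti$ to show that this point set is a $3$-dimensional elliptic quadric section. Write $\K$ for the set of $q^2+1$ points of $\H_5$ corresponding to the lines of $\S$; since the spread lines are pairwise skew, no two points of $\K$ are joined by a line lying on $\H_5$, so $\K$ is a cap with respect to the lines of $\H_5$. Let $T\in\K$ be the point corresponding to $\ti$. A regulus of $\PG(3,q)$ corresponds to a conic of $\H_5$, namely the intersection of $\H_5$ with the plane spanned by the three corresponding points (three mutually skew lines give three points spanning a plane, whose section by $\H_5$ is the conic). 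Thus the hypothesis becomes: for any two points $P,Q\in\K$, the plane $\sigma=\langle T,P,Q\rangle$ meets $\H_5$ in a conic $C_\sigma$ with $C_\sigma\subseteq\K$.

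The central step is to project from $T$. Let $\Pi$ be a hyperplane of $\PG(5,q)$ with $T\notin\Pi$, let $H_T$ be the tangent hyperplane to $\H_5$ at $T$, and put $S=\Pi\cap H_T$, a solid meeting $\H_5$ in the hyperbolic base of the tangent cone at $T$. Projection from $T$ is a bijection between $\H_5\setminus H_T$ and $\Pi\setminus S$ (each has $q^4$ points), so we may regard $\Pi$ as $\AG(4,q)$ with $S$ as its hyperplane at infinity. Because the points of $\K\setminus\{T\}$ are not collinear with $T$ on $\H_5$, none lies on the tangent cone; they all lie in $\H_5\setminus H_T$ and project to a set $\K'$ of $q^2$ distinct affine points of $\Pi\setminus S$. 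A conic $C_\sigma$ through $T$ projects to a line of $\Pi$ whose point at infinity (in $S$) is the image of the tangent to $C_\sigma$ at $T$, and whose remaining $q$ points are the images of $C_\sigma\setminus\{T\}$. Hence the translated hypothesis says precisely that for any two points of $\K'$ the whole affine line joining them lies in $\K'$. As $q>2$, a subset of $\AG(4,q)$ closed under affine lines is an affine subspace, and since $|\K'|=q^2$ it is an affine plane. Thus $\K'$ spans a plane $\rho$ of $\Pi$ with $\K'=\rho\setminus S$, and pulling back, $W=\langle T,\rho\rangle$ is a $3$-space of $\PG(5,q)$ containing $\K$.

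It then remains to identify $W\cap\H_5$. This is a quadric section of a solid containing the $q^2+1$ points of $\K$, no two of which lie on a common line of the section. Since $q^2+1$ exceeds $q+1$, such a large cap cannot occur in a hyperbolic quadric or in a quadric cone (whose line-caps have size at most $q+1$), and $W\not\subseteq\H_5$ because $\H_5$ contains no solids. Hence $W\cap\H_5$ is an elliptic quadric, which has exactly $q^2+1$ points, so $\K=W\cap\H_5$ is a $3$-dimensional elliptic quadric section of $\H_5$. By the Klein correspondence this section is precisely the image of a regular spread, so $\S$ is regular. The main obstacle is the middle paragraph: using the standard projection-of-a-quadric-from-a-point facts to show that the conic-closure condition forces $\K'$ to be a genuine affine subspace; once $\K$ is confined to a solid, the classification of quadric sections finishes the argument routinely.
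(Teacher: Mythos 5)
Your proof is correct, and it reaches the key intermediate goal---confining the Klein image of $\S$ to a $3$-space---by a genuinely different route from the paper. The paper stays inside $\H_5$ and argues combinatorially with reguli: two reguli through $\ti,t_A$ give conics spanning a $3$-space $\Sigma$; a transversal regulus $\T$ through $\ti,t_{B_1},t_{C_1}$ supplies $q-2$ further reguli through $\ti,t_A$ whose conics each have three points in $\Sigma$ and hence lie in it; and the last $q-1$ spread elements are captured by two more reguli $\U,\V$ together with the observation that $\Sigma\cap\Sigma'$ contains two conics and so cannot be only a plane. You instead linearize: stereographic projection from $T=\ti'$ is a bijection from $\H_5\setminus H_T$ to the affine space $\Pi\setminus S$ carrying the conics of $\H_5$ through $T$ to affine lines (your verification that the points coming from spread lines avoid the tangent hyperplane $H_T$ is exactly right, since $X\in H_T\cap\H_5$ with $X\neq T$ would force the line $TX$ to lie on $\H_5$, i.e.\ a spread line meeting $\ti$), so the regulus-closure hypothesis becomes line-closure of the $q^2$-point set $\K'$, and the classical fact that a line-closed subset of affine space over $\GF(q)$, $q>2$, is an affine subspace forces $\K'$ to be an affine plane $\rho\setminus S$; pulling back gives the solid $W=\langle T,\rho\rangle$ containing $\K$. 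From there the two arguments coincide: a solid section of $\H_5$ containing a cap of $q^2+1$ pairwise non-collinear points cannot be hyperbolic, conical or degenerate, hence is an elliptic quadric, which under the Klein correspondence is the image of a regular spread. Your route buys conceptual economy---one projection plus one standard closure theorem replaces the paper's delicate multi-regulus bookkeeping---and it makes transparent why only reguli through $\ti$ are needed (only conics through the centre of projection become lines). The paper's route is more self-contained: it uses nothing beyond incidence of conics and subspaces in $\PG(5,q)$, whereas you import the bijectivity of stereographic projection and the affine-closure characterization as external (though entirely standard) ingredients.
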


\begin{proof} We begin with some notation. 
  Let $\S'$ be the points on the Klein quadric $\H_5$ corresponding to the
  lines of the spread $\S$.  For each line $t_A$ of $\S$, let $t_A'$ denote
  the corresponding point of $\H_5$. Similarly, if $\R$ is a regulus of
  $\PG(3,q)$, let $\R'$ denote the set of points of $\H_5$ corresponding
  to the lines of $\R$, and note that the points in $\R'$ lie on a conic.
  For three mutually skew lines $\ell,m,n$  in $\PG(3,q)$, let
  ${\mathscr R}(\ell,m,n)$ denote the unique regulus containing them.
  
  We first show that all the points of $\S'$ are contained in a common
  3-space. We will repeatedly use the assumption that the
  regulus determined by $\ti$ and two other lines of $\S$ is contained
  in $\S$.

Fix a spread element $t_A\ne \ti$ and consider any two distinct reguli
$\R_1,\R_2$ of $\S$ containing $\ti$ and $t_A$.  Suppose that
\begin{eqnarray*}
\R_1&={\mathscr R}(\ti,t_A,t_{B_1})=&\{\ti,t_A,t_{B_1},\ldots,t_{B_{q-1}}\}\\
\R_2&={\mathscr R}(\ti,t_A,t_{C_1})=&\{\ti,t_A,t_{C_1},\ldots,t_{C_{q-1}}\}.
\end{eqnarray*}
Two distinct reguli have at most two common lines, so $\R_1$ and $\R_2$ intersect in
exactly $\ti,t_A$. 
Hence in the Klein quadric, we have two conics $\R_1',\R_2'$, they lie in
two distinct planes which meet
in the line $\ti't_A'$ and hence span a
3-space denoted $\Sigma$. 
Now consider the regulus $\T$ of $\S$ determined by $\ti,t_{B_1},t_{C_1}$: 
\[
\T={\mathscr R}(\ti,t_{B_1},t_{C_1})=\{\ti,t_{B_1},t_{C_1},t_{D_3},\ldots,t_{D_q}\}.
\]
Note that the lines $t_{B_i},t_{C_j},t_{D_k}$ are all distinct.
 In $\PG(5,q)$, $\T'$ 
is a conic 
that contains three points $\ti',t_{B_1}',t_{C_1}'$ of $\Sigma$, hence
$\T'\subset\Sigma$. 

We now use the lines $t_{D_3},\ldots,t_{D_q}$ of $\T$ to construct $q-3+1$ more
reguli of $\S$ through $\ti,t_A$:
\begin{eqnarray*}
\R_i&=&{\mathscr R}(\ti,t_A,t_{D_i}),\quad i=3,\ldots,q.
\end{eqnarray*}
We have a set $\{\R_1,\ldots,\R_q\}$ of $q$ reguli of $\S$ that pairwise intersect in exactly the lines $\ti,t_A$, so
they cover 
$2+q(q-1)=q^2-q+2$ elements of $\S$. The remaining $q-1$ spread elements
$t_{E_1},\ldots,t_{E_{q-1}}$ of $\S$  lie on
a common regulus $\U$ through $\ti,t_A$ (as every three elements determine a unique regulus), that is, $$\U=\{\ti,t_A,t_{E_1},\ldots,t_{E_{q-1}}\}.$$
Now each reguli $\R_i$, $i=3,\ldots,q$, is mapped to a conic $\R_i'$ of
$\H_5$. Further each conic $\R_i'$, $i=3,\ldots,q$ contains three points $\ti',t_A',t_{D_i}'$ of
$\Sigma$. Hence $\R_i'\subset \Sigma$, $i=1,\ldots,q$.  To show that
$\S'\subset\Sigma$, it remains to show that $t_{E_1}',\ldots,t_{E_{q-1}}'\in\Sigma$.

Now consider the two reguli $\R_1$, $\U$ of $\S$.
They map to two conics
$\R_1',\U'$ of $\H_5$ that span a 3-space denoted by $\Sigma'$. Consider
another regulus of $\S$:
\[
\V={\mathscr R}(\ti,t_{B_1},t_{E_1})=\{\ti,t_{B_1},t_{E_1},t_{F_3},\ldots,t_{F_q}\}.
\]
Then $\V'$ is a conic of $\H_5$ with  three points
$\ti',t_{B_1}',t_{E_1}'$ in $\Sigma'$, and so $\V'$ is
contained in $\Sigma'$. As $\U,\V$ meet exactly in $\ti,t_{E_1}$, the
lines $t_{F_i}$, $i=3,\ldots,q$ are distinct from the lines
$t_{E_i}$, $i=1,\ldots,q-1$ and so belong to the $q^2-q+2$ elements of $\S$ in $\Sigma$. Hence we have $t_{F_i}\subseteq\Sigma$ and so $t_{F_i}'\in\Sigma'\cap\Sigma$, $i=3,\ldots,q$. Thus
$\Sigma\cap\Sigma'$ contains $\R_1'$ and $\V'$ which is more than a plane, and so
$\Sigma=\Sigma'$. That is, $t_{E_1}',\ldots,t_{E_{q-1}}'\in\Sigma$.
Thus the lines of $\S$ are mapped into points of $\H_5$ that lie in a
3-space $\Sigma$.

The intersection $\Q$ of the 3-space $\Sigma$ with $\H_5$ is a quadric of $\Sigma$, hence is either an elliptic, hyperbolic or a degenerate quadric.  Note that in all cases other than the elliptic quadric, $\Q$ does not correspond to a spread (or a set containing a spread) of $\PG(3,q)$. Hence $\Q$ is an elliptic quadric,  and so $\S$ is a regular spread.
\end{proof}

As an immediate consequence of Lemma~\ref{partial-spread} and Theorem~\ref{Sisregular}, we have that the spread $\S$ constructed in Theorem~\ref{lines-form-a-spread} is regular.

\begin{corollary}\Label{cor:regular-spread}
 The spread $\S=\{t_A\st A\in\C\}\cup\{\ti\}$ is regular.
\end{corollary}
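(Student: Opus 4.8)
The plan is to observe that this corollary is a direct chaining of the two results immediately preceding it, so the work is entirely in recognising which hypotheses they supply. First I would recall that Theorem~\ref{lines-form-a-spread} has already established that $\S=\{t_A\st A\in\C\}\cup\{\ti\}$ is genuinely a spread of $\si$, and since $\si$ is a $\PG(3,q)$ this is exactly the setting of Theorem~\ref{Sisregular}. So we have the object $\S$ together with a distinguished line $\ti$ on hand.

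Next I would verify that the regulus-closure hypothesis of Theorem~\ref{Sisregular} is met. This is precisely the content of Lemma~\ref{partial-spread}: for any two further spread lines $t_A,t_B$, the unique regulus determined by $\ti,t_A,t_B$ is contained in $\S$. Thus $\ti$ plays the role of the special line in Theorem~\ref{Sisregular}, and the property ``the regulus containing $\ti$ and any two further lines of $\S$ lies in $\S$'' holds verbatim. Applying Theorem~\ref{Sisregular} then yields that $\S$ is regular, which is the claim.

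Since every substantive ingredient has already been proved earlier --- the spread construction, the regulus-closure of Lemma~\ref{partial-spread}, and the Klein-correspondence argument of Theorem~\ref{Sisregular} forcing such a closed spread onto a 3-dimensional elliptic quadric and hence to be regular --- there is no genuine obstacle at this stage. The only point requiring any care is confirming that the special line singled out in Lemma~\ref{partial-spread} and the special line assumed in Theorem~\ref{Sisregular} are the same line $\ti$, which they are by construction; the corollary is then immediate.
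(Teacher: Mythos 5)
Your proposal is correct and matches the paper exactly: the corollary is stated there as an immediate consequence of Lemma~\ref{partial-spread} (regulus-closure with special line $\ti$) combined with Theorem~\ref{Sisregular}, applied to the spread constructed in Theorem~\ref{lines-form-a-spread}. Nothing further is needed.
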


\subsection{$\C$ gives rise to an arc in $\PG(2,q^2)$}

By Corollary~\ref{cor:regular-spread}, we have a regular spread
$\S=\{t_A\st A\in\C\}\cup\{\ti\}$ in $\sinfty$ from which we can construct a
Desarguesian plane $\P(\S)\cong\PG(2,q^2)$ via the Bruck-Bose
correspondence.  
Let $\Cp$ be the
set of points in $\P(\S)$ corresponding to the affine points of $\C$
together with the point $T_\infty$ on $\li$ corresponding to the spread line $\ti$.

\begin{theorem}\Label{thm:isanarc}
 $\Cp$ is a conic in $\P(\S)\cong\PG(2,q^2)$. 
\end{theorem}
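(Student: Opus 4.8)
\textbf{Proof proposal for Theorem~\ref{thm:isanarc}.}

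The plan is to show that $\Cp$ is a $(q^2+1)$-arc in the Desarguesian plane $\P(\S)\cong\PG(2,q^2)$, since a $(q^2+1)$-arc in a plane of odd order is necessarily a conic. As $|\Cp|=q^2+1$, it suffices to prove that no three points of $\Cp$ are collinear in $\P(\S)$, where collinearity is interpreted in the Bruck-Bose sense. I would split this into two cases according to whether the three points include the point $T_\infty$ on $\li$.

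First consider three affine points $A,B,D$ of $\C$. In the Bruck-Bose representation, a line of $\P(\S)$ secant to $\li$ corresponds to an affine plane of $\PG(4,q)$ meeting $\sinfty$ in a spread line; thus three affine $\C$-points are collinear in $\P(\S)$ exactly when they are coplanar in $\PG(4,q)$ with the plane meeting $\si$ in a spread element. By (A1), any plane meeting $\C$ in more than four points is a $\C$-plane, so in particular a plane through three $\C$-points that is \emph{not} a $\C$-plane meets $\C$ in at most four points but that is not yet the contradiction I want; the key is that a $\C$-plane meets $\si$ in a \cln, which by Lemma~\ref{cplane-meet-theorem}(1) and Theorem~\ref{lines-form-a-spread} is \emph{never} a spread line, since $\C$-lines are the lines $\pi\cap\si$ whereas the spread consists of the lines $t_A$ and $\ti$. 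Hence I would argue: if $A,B,D$ were collinear in $\P(\S)$, the affine plane $\langle A,B,D\rangle$ would meet $\si$ in a spread line; but by (A2) the $\C$-plane $\pi_{AB}$ through $A,B$ meets $\si$ in a \cln, and one must rule out that $\langle A,B,D\rangle$ is this $\C$-plane (it is not, as a line of $\P(\S)$ and a $\C$-plane correspond to different objects) and that it is any other plane through a spread line. The cleanest route is to invoke the machinery already built: the spread lines $t_A$ are transversal to tangent structure, and any affine plane meeting $\si$ in a spread line $t_C$ contains at most one $\C$-point by Corollary~\ref{cor:tA2} (for $t_C$) and by Lemma~\ref{lemma-plane-thru-tinfty} (for $\ti$). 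Thus no affine plane on a spread line contains three $\C$-points, so no three affine points of $\Cp$ are collinear in $\P(\S)$.

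Next consider three points of $\Cp$ including $T_\infty$, say $T_\infty, A, B$ with $A,B\in\C$. A line of $\P(\S)$ through the infinite point $T_\infty$ corresponds to an affine plane of $\PG(4,q)$ through the spread line $\ti$. So $T_\infty,A,B$ are collinear in $\P(\S)$ precisely when $A,B$ lie in a common affine plane through $\ti$. But Lemma~\ref{lemma-plane-thru-tinfty} states that every affine plane through $\ti$ meets $\C$ in exactly one point. Hence no two $\C$-points share an affine plane through $\ti$, so $T_\infty,A,B$ are never collinear.

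Combining the two cases, no three points of $\Cp$ are collinear in $\P(\S)$, so $\Cp$ is a $(q^2+1)$-arc, and since $q$ is odd, by Segre's theorem~\cite[Theorem 8.14]{hirs98} it is a conic. The main obstacle I anticipate is the bookkeeping in the first case: correctly translating ``collinear in $\P(\S)$'' into ``coplanar in $\PG(4,q)$ along a spread line'' and then ruling out \emph{all} affine planes on a spread line, not just $\C$-planes. The resolution relies entirely on the earlier structural results (Corollary~\ref{cor:tA2} and Lemma~\ref{lemma-plane-thru-tinfty}) which already assert that affine planes on spread lines carry at most one $\C$-point, so the argument reduces to a careful case analysis rather than new geometry.
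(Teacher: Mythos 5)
Your overall strategy --- show $\Cp$ is a $(q^2+1)$-arc by splitting into lines of $\P(\S)$ through $T_\infty$ and lines through the other spread elements, then invoke Segre --- is exactly the paper's, and your treatment of the first case via Lemma~\ref{lemma-plane-thru-tinfty} is correct. The gap is in the other case. You assert that ``any affine plane meeting $\si$ in a spread line $t_C$ contains at most one $\C$-point by Corollary~\ref{cor:tA2}.'' That is not what Corollary~\ref{cor:tA2} says: it concerns only the single plane $\langle t_C,C\rangle$ spanned by $t_C$ and its own $\C$-point $C$, not the $q^2$ affine planes through $t_C$. Moreover, the claim itself is false: once the theorem is proved, $t_C$ corresponds to the point of $\li$ where the tangent to the conic at $C$ meets $\li$, and through that point there are $(q^2-1)/2$ secant lines, i.e.\ $(q^2-1)/2$ affine planes through $t_C$ meeting $\C$ in exactly \emph{two} points. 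So ``at most one'' cannot hold; the bound you need, and the only true one, is ``at most two,'' and establishing it is the mathematical heart of the theorem rather than something obtainable by direct citation of the earlier structural results.

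The paper closes exactly this gap as follows. Suppose an affine plane $\alpha$ through a spread line $t_A$ contains three $\C$-points $P,Q,R$. By Corollary~\ref{cor:tA2} we have $\alpha\neq\langle A,t_A\rangle$, so $A\notin\alpha$. The line $PQ$ lies in $\alpha$ and hence meets $t_A$ in a point of $\si$; the unique $\C$-plane $\pi_{PQ}$ through $P,Q$ given by (A2) contains the line $PQ$, so it meets $t_A$, and therefore contains $A$ by Corollary~\ref{cor:tA}. Similarly the $\C$-plane $\pi_{PR}$ contains $A$. These two $\C$-planes are distinct (otherwise a single $\C$-plane would contain the non-collinear points $P,Q,R$, hence equal $\alpha$, which does not contain $A$), yet both contain the pair of $\C$-points $A,P$, contradicting (A2). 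This is the step your proposal is missing; with it inserted in place of the miscited corollary, your two-case decomposition and the appeal to Segre's theorem go through as written.
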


\begin{proof}
We show that $\O=\C\cup T_\infty$ is a $(q^2+1)$-arc in $\PG(2,q^2)$, and hence a conic. 
A line through $T_\infty$ in $\PG(2,q^2)$ corresponds to an affine plane of
$\PG(4,q)$ that contains the spread line $\ti$. By Lemma~\ref{lemma-plane-thru-tinfty}, the affine planes of $\PG(4,q)$  through $\ti$ meet $\C$ in exactly one point.
Hence in $\PG(2,q^2)$, a line through $T_\infty$ meets
$\Cp$ in at exactly one further point.

Let $t_A$ be a line of the spread $\S$ in $\PG(4,q)$ with corresponding $\C$-point $A$. 
Let $\alpha$ be an affine plane of $\PG(4,q)$ through the spread line $t_A$ that contains three $\C$-points $P,Q,R$.     
We obtain a contradiction to show that this is not possible. 
Note that $\alpha\neq\langle A,t_A\rangle$ since 
by Corollary~\ref{cor:tA2}, the plane $\langle A,t_A\rangle$ contains exactly one $\C$-point. 
By (A2), $P,Q$ lie in a unique $\C$-plane $\pi_{PQ}$ which meets $t_A$ in the point $PQ\cap t_A$. 
 Hence  by Corollary~\ref{cor:tA}, $\pi_{PQ}$ contains the point $A$. 
Similarly, $P,R$ lie on a unique \cpl\ $\pi_{PR}$ that contains
$A$. Hence we have two distinct \cpl s $\pi_{PQ},\pi_{PR}$ that both contain the
two distinct points $A,P$ of $\C$, contradicting (A2). Hence any affine plane  that contains a spread line $t_A$ contains at
most two points of $\C$. Thus in the Bruck-Bose plane $\PG(2,q^2)$, a line through a point of
$\linfty\setminus T_\infty$ meets $\Cp$ in at most two points. Further,
$\linfty$ meets $\Cp$ in one point, so we have shown that $\Cp$ is a
$(q^2+1)$-arc in $\PG(2,q^2)$. 
As $q$ is odd, by Segre \cite{segr55}, $\Cp$ is a
conic in $\PG(2,q^2)$.  \end{proof}

This almost completes the proof of Theorem~\ref{mainthm}. It remains to show that the spread $\S$ is unique, which we do in the next section. 

\subsection{The spread $\S$ is unique}

We now show that the spread $\S=\{t_A\st A\in\C\}\cup\{\ti\}$ constructed in Theorem~\ref{lines-form-a-spread} is the only spread in $\si$ for which the $\C$-points give rise to an arc in the Bruck-Bose plane $\P(\S)$.

\begin{theorem}\Label{thm:diffspread}
Let $\S'$ be a spread of $\si$ distinct from the spread $\S=\{t_A\st A\in\C\}\cup\{\ti\}$. Then in the associated
Bruck-Bose plane $\P(\S')$,
the set of points corresponding to $\C$ do not form an arc.
\end{theorem}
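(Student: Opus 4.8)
The plan is to show that if $\S'$ is any spread of $\si$ distinct from $\S$, then the point set corresponding to $\C$ fails to be an arc in $\P(\S')$. The essential idea is that the spread $\S$ was constructed precisely so that each $\C$-point $A$ is associated with its tangent directions, all lying on the single spread line $t_A$. The arc condition in the Bruck-Bose plane translates into a combinatorial statement about how affine planes containing a spread line meet $\C$: a line of $\P(\S')$ meets the point set in at most two points if and only if every affine plane of $\PG(4,q)$ containing the corresponding spread line of $\S'$ meets $\C$ in at most two points. So the strategy is to locate a spread line of $\S'$ through which some affine plane contains at least three $\C$-points, giving a secant of $\P(\S')$ meeting the image of $\C$ in at least three points.

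Since $\S'\neq\S$, there is a line $u$ of $\S'$ not in $\S$. First I would consider the two cases according to whether $u=\ti$ or $u\neq\ti$; note that $\ti$ may or may not belong to $\S'$, so I would treat the generic line $u\in\S'\setminus\S$. The key observation is that because $u$ is not a line of $\S$, it is not equal to any $t_A$ nor to $\ti$, so by Theorem~\ref{lines-form-a-spread} $u$ meets the lines of $\S$ in a controlled way. In particular, I would examine how $u$ sits relative to the lines $t_A$: since $\S$ is itself a spread, $u$ is covered point-by-point by the lines of $\S$, and because $u\notin\S$, $u$ must meet at least two distinct lines of $\S$ (it cannot be contained in a single spread line). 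The aim is to find an affine plane $\alpha$ containing $u$ with three $\C$-points; equivalently, to find three $\C$-points $A_1,A_2,A_3$ whose tangent structure forces $u$ to lie in a common affine plane with them.

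The cleanest route is to produce three $\C$-points lying in a common affine plane $\alpha$ with $u=\alpha\cap\si$. Given two $\C$-points $A,B$ lying in a common $\C$-plane $\pi$ (which exists by (A2)), the $\C$-line $\ell=\pi\cap\si$ is a line of $\si$ meeting $\ti,t_A,t_B$ and the spread lines of the other $\C$-points of $\pi$, by Corollary~\ref{cor:tA}. The intended contradiction comes from choosing $u$ so that an affine plane $\alpha$ through $u$ picks up three points of $\C$ that do \emph{not} all lie in one $\C$-plane: by (A2) and Corollary~\ref{cor:tA}, if $P,Q,R\in\C$ lie in an affine plane $\alpha$ with $\alpha\cap\si=u$ a line of $\S'$, then examining the $\C$-planes $\pi_{PQ},\pi_{PR}$ and their intersections with $u$ shows either that $u$ coincides with some $t_A$ (forcing $u\in\S$, contrary to assumption) or that $\alpha$ gives a genuine $3$-secant. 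The main obstacle will be the careful case analysis establishing that for \emph{any} $u\in\S'\setminus\S$ one can actually exhibit such an affine plane $\alpha$ with three $\C$-points; this requires using the point-type classification of Lemma~\ref{lem:pttype} and the regulus structure of Lemma~\ref{partial-spread}, and ruling out the degenerate possibility that $u$ happens to meet exactly the pattern of spread lines of $\S$ that would keep every plane through $u$ a $2$-secant. I expect the delicate step to be showing that $u\notin\S$ genuinely forces a plane through $u$ to meet $\C$ in three points, since this is where the specific geometry of the tangent lines $t_A$ — and the fact that they were constructed from the unique tangents to the conics $\pi\cap\C$ — must be invoked to preclude any alternative spread from also yielding an arc.
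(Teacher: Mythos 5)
Your proposal correctly translates the arc condition (a line of $\P(\S')$ is a $k$-secant of the image of $\C$ exactly when the corresponding affine plane of $\PG(4,q)$ through the relevant line of $\S'$ meets $\C$ in $k$ points) and correctly identifies the goal, but it is a plan rather than a proof, and the plan as stated cannot be carried out uniformly. The first gap: everything hinges on showing that $u\notin\S$ ``genuinely forces a plane through $u$ to meet $\C$ in three points,'' which you defer as ``the delicate step'' --- but that step \emph{is} the theorem, and you never supply it. The paper's argument for lines disjoint from $\ti$ is a substantive counting argument: assuming every plane through such a line $\ell$ meets $\C$ in at most two points, each of the $q+1$ points $X_i$ of $\ell$ lies on a unique \cln\ (Lemma~\ref{lem:pttype}), whose \cpl\ carries a conic $\mathcal E_i$; the $(q-1)/2$ two-secants of $\mathcal E_i$ through $X_i$ span with $\ell$ pairwise distinct planes, and summing over the $X_i$ these account for $q^2-1$ points of $\C$; the remaining tangents $m_i$ must then all pass through the single unaccounted-for $\C$-point $Z$, which by Definition~\ref{def:tA} forces $\ell=t_Z\in\S$. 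Nothing in your sketch produces this (or any) mechanism for extracting the third $\C$-point.

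Second, and more serious: your expectation that one can exhibit a $3$-secant plane through an \emph{arbitrary} $u\in\S'\setminus\S$ is false when $u$ meets $\ti$. By Lemma~\ref{l-meets-Pinfty}(1), any line of $\si$ that meets $\ti$ and is not a \cln\ has the property that \emph{every} affine plane through it meets $\C$ in at most two points --- this is not a ``degenerate possibility'' to be ruled out, it genuinely occurs, and no plane through such a $u$ can serve as your $3$-secant. The paper therefore handles this case by a different mechanism: if every line of $\S'\setminus\S$ meets $\ti$, then $\ti\notin\S'$ and $\S'\setminus\S$ consists of exactly $q+1$ lines, which (taking two further spread lines $t_A,t_B$ met by one of them) are precisely the $q+1$ transversals of the mutually skew lines $\ti,t_A,t_B$, i.e.\ a regulus; the unique \cln\ $m$ of the \cpl\ $\pi_{AB}$ through $A,B$ is also a transversal of $\ti,t_A,t_B$ (Lemma~\ref{cplane-meet-theorem}(1), Lemma~\ref{thm-tinfty} and Corollary~\ref{cor:tA}), hence $m\in\S'$, and then $\pi_{AB}$ corresponds to a line of $\P(\S')$ containing $q$ points of $\C$. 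So in this case the failure of the arc condition comes not from a plane through $u$ at all, but from a $q$-secant through a \emph{different} line of $\S'$. Without this second argument your case analysis cannot close, so the proposal has a genuine gap in both halves.
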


\begin{proof}
  Let $\ell$ be a line in $\sinfty$ disjoint from $\ti$ and assume
  that all the affine planes through $\ell$ meet
  $\C$ in at most two points. We will show that $\ell$ must be one of the
  spread lines $t_A$ for some point $A\in\C$. 
Let $X_1$ be a point on $\ell$, by Lemma~\ref{lem:pttype} and ~\ref{thm-tinfty},  there exists a unique \cln\ $\ell_{1}$
through $X_1$. By Corollary~\ref{count-cline}, $\ell_1$ is contained in a unique \cpl\ $\pi_{1}$. Let ${\mathcal E}_1$ be
the conic in $\pi_{1}$, that is,
${\mathcal E}_1=(\C\cap\pi_{1})\cup(\pi_{1}\cap \ti)$. Now $X_1$ is on one tangent to ${\mathcal E}_1$, namely
$\ell_{1}$, hence $X_1$ is on a second tangent $m_{1}$ to ${\mathcal E}_1$.
There are $(q-1)/2$ 2-secants to ${\mathcal E}_1$ through $X_1$, each together with the line $\ell$
determines an affine  plane, we label these planes $\alpha_1,\ldots,\alpha_{(q-1)/2}$. As
each plane contains a 2-secant to ${\mathcal E}_1$ through $X_1$, each contains at least two \cpt s, and hence by our assumption about $\ell$, contains exactly two \cpt s.

Similarly, let $X_2$ be a point on $\ell$, lying on a unique $\C$-line $\ell_2$, defining the unique \cpl\ $\pi_2$, and construct planes $\beta_1,\ldots,\beta_{(q-1)/2}$ which contain $\ell$ and exactly two \cpt s.
Suppose $\alpha=\alpha_i=\beta_j$ for some $i,j$. Then $\alpha$ contains at least three $\C$-points (as $\alpha_i$ contains a 2-secant of $\C$ through $X_1$, $\beta_j$ contains a 2-secant of $\C$ through $X_2$, and $X_1\neq X_2$). This contradicts our assumption that planes about $\ell$ contain at most two $\C$-points. Hence the planes $\alpha_1,\ldots,\alpha_{(q-1)/2}, \beta_1,\ldots,\beta_{(q-1)/2}$ are distinct, and cover $(q-1)\times 2$ distinct $\C$-points.

Repeating this for the remaining points $X_3,\ldots,X_{q+1}$ of $\ell$, we obtain a set $
\mathcal K$ of distinct planes about $\ell$ (including the $\alpha_i,\beta_j$) of size $(q+1)\times (q-1)/2$. Each plane in $\mathcal K$ contains
exactly two \cpt s, accounting for $q^2-1$ points of $\C$ that lie
on planes through $\ell$.

 Now for each $i$, consider the unique affine tangent line $m_{i}$ through $X_i$ (distinct from $\ell_i$) to the conic ${\mathcal E}_i$, in
the plane $\pi_{i}$. We have a set of $q+1$ planes $\langle \ell,m_{i}\rangle$, not necessarily distinct, but distinct from the planes of $\mathcal K$, each
 meeting $\C$ in at least one point. However, there is only one \cpt\ unaccounted for by the planes of $\mathcal K$, so the planes $\langle \ell,m_{i}\rangle$  are all the same plane $\alpha$ which contains exactly one point $Z$ of $\C$. The $q+1$ lines $ZX_1,\ldots,ZX_{q+1}$ through $Z$ in $\alpha$ are respectively the tangents $m_1,\ldots,m_{q+1}$ to the conic ${\mathcal E}_1,\ldots,{\mathcal E}_{q+1}$ in $\pi_{1},\ldots,\pi_{q+1}$ at $Z$.
Further, the tangents $m_1,\ldots,m_{q+1}$ meet $\Sigma_\infty$ at points $X_1,\ldots,X_{q+1}$ of $\ell$, so by Definition~\ref{def:tA},  $\ell=t_Z$.
 That is, $\ell$ is one of the spread lines of $\S$. We conclude
  that every line in $\sinfty$ which is disjoint from $\ti$ and not in $\S$ lies on an affine plane
  that contains at least three \cpt s. 

Suppose $\S'$ is a spread of $\si$ distinct from $\S$ such that in the Bruck-Bose plane $\P(\S')$ the $\C$-points form an arc. Let $\S'\setminus\S$ denote the set of lines in $\S'$ that are not in $\S$. 
If $\S'\setminus\S$ contains a line $\ell$ disjoint from $t_\infty$, then by the above argument, $\ell$ lies on some affine plane $\alpha$ that contains at least three $\C$-points. In the Bruck-Bose plane $\P(\S')$, $\alpha$ corresponds to a line that contains three $\C$-points, so $\C$ is not an arc, contradicting our assumption. 
Hence $\S'\setminus \S$ cannot contain a line
$\ell$ disjoint from $\ti$,
thus the lines in $\S'\setminus \S$ must meet $\ti$. 
So if $\S'\neq\S$, then $\S'\setminus\S$ contains $q+1$ lines that meet $t_\infty$ in a point. 
Let $\ell\in\S'\setminus\S$, so $\ell$ meets $\ti$ and another $q$ spread lines $t_{A_1},\ldots,t_{A_q}$ of $\S$. Let  $\R=\{ \ti,t_{A_1},\ldots,t_{A_q}\}$, then  the lines of $\S'\setminus\S$ cover the same points as the lines of $\R$ do.  Let $\S'\setminus\S=\{\ell,\ell_1,\ldots,\ell_q\}$, then each line $\ell_i$ meets each line in $\R$.

Let $t_A$, $t_B$ be any two lines of $\R$ distinct from $t_\infty$.
There are exactly $q+1$ lines meeting $\ti,t_A,t_B$, and since $\ell,\ell_1,\ldots,\ell_q$ meet each of $\ti,t_A,t_B$, they form a regulus $\R'$.  A similar argument with three elements of $\R'$ shows that $\R$ is the opposite regulus of $\R'$.
By
Corollary~\ref{cor:tA}, there exists a unique
\cln\ $m$ meeting  $\ti,t_A,t_B$, namely the \cln\ 
corresponding to the unique \cpl\ $\pi_{AB}$ through $A$ and $B$. 
The line $m$ meets three lines of $\R$ and hence it is a line of
$\R'$. 
That is, $m$ is a line of $\S'$. However, the $\C$-plane $\pi_{AB}$ through $m$ corresponds to a line of $\P(\S')$ that contains $q$ points of $\C$. Hence $\C$ is not an arc of $\P(\S')$. 
Thus   the only spread which gives rise to an arc in the corresponding Bruck-Bose  plane is the regular spread $\S$ constructed in Theorem~\ref{lines-form-a-spread}.
\end{proof}

This completes the proof of Theorem~\ref{mainthm}.

\section{Conclusion}

In this paper we characterised sets in $\PG(4,q)$, $q$ odd, $q\geq 7$ satisfying the combinatorial properties given in 
Theorem~\ref{mainthm}
as corresponding via the Bruck-Bose correspondence to conics in $\PG(2,q^2)$. 
We note that a similar characterisation when $q$ is even is given in \cite{conicqeven}.
The cases when $q=3$ or $5$ are still open.

An interesting geometric question arises from the properties of a conic given in Lemma~\ref{conic-satisfies-props}. Let $\C$ be a conic in $\PG(2,q^2)$, $q$ odd, tangent to $\li$, and let $\pi$ be a $\C$-plane. By property 3 of Lemma~\ref{conic-satisfies-props}, the points of $\pi$ that are not in $\C$ lie on exactly one more $\C$-plane. Let  $P$ be a point of $\pi\setminus\C$. If $P$ is an interior point of the subconic $\pi_\C=\pi\cap\C$, then we can use the polarity of $\pi_\C$ to construct the second $\C$-plane containing $P$. If $P$ is an exterior point of $\pi_\C$, then it would be interesting to have a geometric construction of the second $\C$-plane containing $P$.

\end{document}